\documentclass[11pt,oneside]{article}
\usepackage{color}
\usepackage{psfrag}
\usepackage{epsfig}
\usepackage{graphicx}
\usepackage{amsmath}
\usepackage{amssymb}
\usepackage{amsthm}
\usepackage{float}
\usepackage{cite}
\usepackage[latin1]{inputenc}
\usepackage{cancel}

\newtheorem{theorem}{Theorem}[section]

\newtheorem{proposition}[theorem]{Proposition}
\newtheorem{lemma}[theorem]{Lemma}

\newtheorem{remark}[theorem]{Remark}
\textwidth 165mm \textheight 17cm \evensidemargin -0cm
\oddsidemargin 0cm

\newcommand{\F}{{\mathbb F}}

\newcommand{\fqs}{{\mathbb F_{q^2}}}
\newcommand{\PGU}{{\rm PGU}}
\newcommand{\PSU}{{\rm PSU}}
\newcommand{\PGL}{{\rm PGL}}
\newcommand{\PSL}{{\rm PSL}}
\newcommand{\SL}{{\rm SL}}
\newcommand{\PG}{{\rm PG}}
\newcommand{\cX}{{\mathcal X}}

\newcommand{\cH}{{\mathcal H}}

\newcommand{\aut}{{\rm Aut}}
\newcommand{\ord}{{\rm ord}}
\newcommand{\diag}{{\rm diag}}

\pagenumbering{arabic}

\begin{document}

\title{On the classification problem for the genera of quotients of the Hermitian curve}
\date{}
\author{Francesca Dalla Volta, Maria Montanucci, Giovanni Zini
\thanks{
This research was partially supported by Ministry for Education, University and Research of Italy (MIUR) and by the Italian National Group for Algebraic and Geometric Structures and their Applications (GNSAGA - INdAM). 
The research of Francesca Dalla Volta was partially supported by the Project PRIN 2015 'Teoria dei gruppi e applicazioni' - Prot. N. 2015TW9LSR. 
URL: Francesca Dalla Volta (francesca.dallavolta@unimib.it), Maria Montanucci (maria.montanucci@unibas.it), Giovanni Zini (giovanni.zini@unimib.it)}
}

\maketitle

\begin{abstract}
In this paper we characterize the genera of those quotient curves $\cH_q/G$ of the $\mathbb{F}_{q^2}$-maximal Hermitian curve $\cH_q$ for which $G$ is contained in the maximal subgroup $\mathcal{M}_1$ of $\aut(\cH_q)$ fixing a self-polar triangle, or $q$ is even and $G$ is contained in the maximal subgroup $\mathcal{M}_2$ of $\aut(\cH_q)$ fixing a pole-polar pair $(P,\ell)$ with respect to the unitary polarity associated to $\cH_q$. In this way several new values for the genus of a maximal curve over a finite field are obtained. These results together with \cite{MZultimo} leave just two open cases to provide the complete list of genera of Galois subcovers of the Hermitian curve; namely, the open cases in \cite{BMXY} when $G$ fixes a point $P \in \cH_q(\mathbb{F}_{q^2})$ and $q$ is even, and the open cases in \cite{MZ} when $G\leq\mathcal{M}_2$ and $q$ is odd.
\end{abstract}

{\bf Keywords:} Hermitian curve, unitary groups, maximal curves, quotient curves

{\bf 2000 MSC:} 11G20

\section{Introduction}
Let $q$ be a power of a prime $p$, $\mathbb{F}_{q^2}$ be the finite field with $q^2$ elements, and $\cX$ be a projective, absolutely irreducible, non-singular algebraic curve defined over $\mathbb{F}_{q^2}$.
The curve $\cX$ is called $\mathbb{F}_{q^2}$-maximal if the number $|\cX(\mathbb{F}_{q^2})|$ of its $\mathbb{F}_{q^2}$-rational points attains the Hasse-Weil upper bound $q^2+1+2gq$, where $g$ denotes the genus of $\cX$. Maximal curves have been investigated for their applications in Coding Theory; see \cite{Sti,vdG2}. Surveys on maximal curves are found in \cite{FT,G,G2,GS,vdG} and \cite[Chapter 10]{HKT}.

A well-known and important example of an $\mathbb{F}_{q^2}$-maximal curve is the Hermitian curve $\cH_q$. It is defined as any $\mathbb{F}_{q^2}$-rational curve which is projectively equivalent to the plane curve $X^{q+1}+Y^{q+1}+Z^{q+1}=0$. For fixed $q$, the curve $\cH_q$ has the largest genus $g(\cH_q)=q(q-1)/2$ that an $\mathbb{F}_{q^2}$-maximal curve can have. The automorphism group $\aut(\cH_q)$ is defined over $\mathbb{F}_{q^2}$ and isomorphic to the projective general unitary group $\PGU(3,q)$, the group of projectivities of ${\rm PG}(2,q^2)$ commuting with the unitary polarity associated with $\cH_q$. The automorphism group $\aut(\cH_q)$ has size bigger than $16g(\cH_q)^4$, while any curve of genus $g\geq2$ not isomorphic to the Hermitian curve has less than $16g^4$.

By a result commonly referred to as the Kleiman-Serre covering result (see \cite{KS} and \cite[Proposition 6]{L}) a curve $\cX$ defined over $\mathbb{F}_{q^2}$ and $\mathbb{F}_{q^2}$-covered  by an $\mathbb{F}_{q^2}$-maximal curve is $\mathbb{F}_{q^2}$-maximal as well. In particular, $\mathbb{F}_{q^2}$-maximal curves can be obtained as Galois $\mathbb{F}_{q^2}$-subcovers of an $\mathbb{F}_{q^2}$-maximal curve $\cX$, that is, as quotient curves $\cX/G$ where $G$ is a finite $\mathbb{F}_{q^2}$-automorphism group of $\cX$. Most of the known maximal curves are Galois covered by the Hermitian curve; see for instance \cite{GSX,CKT2,GHKT2,MZ,MZRS} and the references therein.

A challenging open problem is the determination of the spectrum $\Gamma(q^2)$ of genera of $\mathbb{F}_{q^2}$-maximal curves, for given $q$; some values in $\Gamma(q^2)$ arise from $\mathbb{F}_{q^2}$-maximal curves which are not covered or Galois covered by the Hermitian curve.
The first example of a maximal curve not Galois covered by the Hermitian curve was discovered by Garcia and Stichtenoth \cite{GS3}. This curve is $\mathbb{F}_{3^6}$-maximal and it is not Galois covered by $\cH_{27}$. It is a special case of the $\mathbb{F}_{q^6}$-maximal GS curve, which was later shown not to be Galois covered by $\cH_{q^3}$ for any $q>3$, \cite{GMZ,Mak}. Giulietti and Korchm\'aros \cite{GK} provided an $\mathbb{F}_{q^6}$-maximal curve, nowadays referred to as the GK curve, which is not covered by the Hermitian curve $\cH_{q^3}$ for any $q>2$, like some of its Galois subcovers \cite{GQZ,TTT}. Two generalizations of the GK curve were introduced by Garcia, G\"uneri and Stichtenoth \cite{GGS} and by Beelen and Montanucci in \cite{BM}. Both these two generalizations are $\mathbb{F}_{q^{2n}}$-maximal curves, for any $q$ and odd $n \geq 3$. Also, they are not Galois covered by the Hermitian curve $\cH_{q^n}$ for $q>2$ and $n \geq 5$, see \cite{DM,BM}; the Garcia-G\"uneri-Stichtenoth's generalization is also not Galois covered by $\cH_{2^n}$ for $q=2$, see \cite{GMZ}.

Apart from the examples listed above, most of the known values in $\Gamma(q^2)$ have been obtained from quotients curves $\cH_q/G$ of the Hermitian curve, which have beed investigated in many papers.
Therefore, towards the determination of $\Gamma(q^2)$ it is important to study the genera $g(\cH_q/G)$ for all $G\leq\aut(\cH_q)$.
We list some known partial results:
\begin{itemize}
\item $g(\cH_q/G)$ for some $G$ fixing a point of $\cH_q(\mathbb{F}_{q^2})$; see \cite{BMXY,GSX,AQ}.
\item $g(\cH_q/G)$ when $G$ normalizes a Singer subgroup of $\aut(\cH_q)$ fixing three non-collinear points of $\cH_q(\mathbb{F}_{q^6})\setminus\cH_q(\mathbb{F}_{q^2})$; see \cite{GSX, CKT1,CKT2}.
\item $g(\cH_q/G)$ when $G$ has prime order; see \cite{CKT2}.
\item $g(\cH_q/G)$ when $G$ fixes neither points nor triangles in ${\rm PG}(2,\bar{\mathbb{F}}_{q^2})$; see \cite{MZultimo}. 
\end{itemize}
Only the following cases for $G\leq\aut(\cH_q)$ still have to be analyzed:
\begin{enumerate}
\item $G$ fixes a self-polar triangle in ${\rm PG}(2,q^2)$,
\item $G$ fixes an $\mathbb{F}_{q^2}$-rational point $P\notin\cH_q$.
\item $G$ fixes a point $P \in \cH_q(\mathbb{F}_{q^2})$, $p=2$ and $|G|=p^\ell d$ where $p^\ell \leq q$ and $d \mid (q-1)$.
\end{enumerate}
We observe that some partial results are known in the literature for the cases 1 and 2.
In this paper a complete analysis of Case 1 is given, as well as a complete analysis of Case 2 when $p=2$.
More precisely, this paper is organized as follows. Section \ref{sec:preliminari} recalls necessary preliminary results on the Hermitian curve and its automorphism group. 
In Section \ref{sec:triangolo} a complete analysis of Case 1 is given. 
Section \ref{sec:polopolare} contains a complete analysis of Case 2 under the assumption $p=2$.
Finally, Section \ref{sec:nuovigeneri} collects some new genera of maximal curves obtained for fixed values of $q$.

\section{Preliminary results}\label{sec:preliminari}

Throughout the paper $p$ is a prime number, $n$ is a positive integer, $q=p^n$, and $\mathbb{F}_q$ is the finite field with $q$ elements. The Deligne-Lusztig curves defined over $\mathbb{F}_q$ were originally introduced in \cite{DL}. Other than the projective line, there are three families of Deligne-Lusztig curves, named Hermitian curves, Suzuki curves and Ree curves. The Hermitian curve $\mathcal H_q$ arises from the algebraic group $^2A_2(q)={\rm PGU}(3,q)$ of order $(q^3+1)q^3(q^2-1)$, that is, $\aut(\cH_q)=\PGU(3,q)$. It has genus $q(q-1)/2$ and is $\mathbb F_{q^2}$-maximal. Three $\mathbb{F}_{q^2}$-isomorphic models of $\cH_q$ are given by the following equations:
\begin{equation} \label{M1}
X^{q+1}+Y^{q+1}+Z^{q+1}=0;
\end{equation}
\begin{equation} \label{M2}
Y^{q}Z+YZ^{q}- X^{q+1}=0;
\end{equation}
\begin{equation} \label{M3}
XY^{q}-X^{q}Y+\omega Z^{q+1}=0,
\end{equation}
where $\omega\in\mathbb{F}_{q^2}$ satisfies $\omega^{q+1}=-1$.
The curves \eqref{M1} and \eqref{M2} are known as the Fermat and the Norm-Trace model of the Hermitian curve, respectively.
The action of $\aut(\cH_q)$ on the set $\cH_q(\mathbb{F}_{q^2})$ of all $\mathbb{F}_{q^2}$-rational points of $\cH_q$ is equivalent to the $2$-transitive permutation representation of $\PGU(3,q)$ on the isotropic points with respect to the unitary form.
The combinatorial properties of $\cH_q(\mathbb{F}_{q^2})$ can be found in \cite{HP}. The size of $\cH_q(\mathbb{F}_{q^2})$ is equal to $q^3+1$, and a line of $PG(2,q^2)$ has either $1$ or $q+1$ common points with $\cH_q(\mathbb{F}_{q^2})$, that is, it is either a tangent line or a chord of $\cH_q(\mathbb{F}_{q^2})$.
%Furthermore, a unitary polarity is associated with $\cH_q(\mathbb{F}_{q^2})$ whose isotropic points are those of $\cH_q(\mathbb{F}_{q^2})$ and isotropic lines are the $1$-secants of $\cH_q(\mathbb{F}_{q^2})$, that is, the tangents to $\cH_q$ at the points of $\cH_q(\mathbb{F}_{q^2})$.

%Note that the Hermitian curve is a particular case of the Fermat curve $\cF_m$ defined over $\mathbb{F}_p$ by the equation
%$$ X^m+Y^m+Z^m=0, $$
%where $p\nmid m$.
%If $m\ne p^h+1$ for any $h$, then ${\rm Aut}(\cF_m)\cong(C_m\times C_m)\rtimes S_3$.

The following classification of maximal subgroups of the projective special subgroup $\PSU(3,q)$ of $\PGU(3,q)$ goes back to Mitchell and Hartley; see \cite{M}, \cite{H}, \cite{HO}.
Actually, in \cite{KL} it is possible to find a classification of maximal subgroups of $\PSU(n,q)$, $n\geq 3$, in the context of Aschbacher's classes of subgroups, after the theorem of classification for finite simple groups.
Here we prefer to go back to the results of Mitchell and Hartley where it is more readable the relation between subgroups and action on $\PG(2,\bar{\mathbb{F}}_{q^2})$ and $\cH_q$.

\begin{theorem} \label{Mit} Let $d={\rm gcd}(3,q+1)$. Up to conjugacy, the subgroups below give a complete list of maximal subgroups of $\PSU(3,q)$.
\begin{itemize}
\item[(i)] the stabilizer of an $\F_{q^2}$-rational point of $\cH_q$. It has order $q^3(q^2-1)/d$;
\item[(ii)] the stabilizer of an $\F_{q^2}$-rational point off $\cH_q$ $($equivalently the stabilizer of a chord of $\cH_q(\mathbb{F}_{q^2}))$. It has order $q(q-1)(q+1)^2/d$;
\item[(iii)] the stabilizer of a self-polar triangle with respect to the unitary polarity associated to $\cH_q(\mathbb{F}_{q^2})$. It has order $6(q+1)^2/d$;
\item[(iv)] the normalizer of a (cyclic) Singer subgroup. It has order $3(q^2-q+1)/d$ and preserves a triangle
in $\PG(2,q^6)\setminus\PG(2,q^2)$ left invariant by the Frobenius collineation $\Phi_{q^2}:(X,Y,T)\mapsto (X^{q^2},Y^{q^2},T^{q^2})$ of $\PG(2,\bar{\mathbb{F}}_{q})$;

{\rm for $p>2$:}
\item[(v)] ${\rm PGL}(2,q)$ preserving a conic;
\item[(vi)] $\PSU(3,p^m)$ with $m\mid n$ and $n/m$ odd;
\item[(vii)] subgroups containing $\PSU(3,p^m)$ as a normal subgroup of index $3$, when $m\mid n$, $n/m$ is odd, and $3$ divides both $n/m$ and $q+1$;
\item[(viii)] the Hessian groups of order $216$ when $9\mid(q+1)$, and of order $72$ and $36$ when $3\mid(q+1)$;
\item[(ix)] ${\rm PSL(2,7)}$ when $p=7$ or $-7$ is not a square in $\mathbb{F}_q$;
\item[(x)] the alternating group $\mathbf{A}_6$ when either $p=3$ and $n$ is even, or $5$ is a square in $\mathbb{F}_q$ but $\mathbb{F}_q$ contains no cube root of unity;
\item[(xi)] an extension of order $720$ of $\mathbf{A}_6$, when $p=5$ and $n$ is odd;
\item[(xii)] the alternating group $\mathbf{A}_7$ when $p=5$ and $n$ is odd;

{\rm for $p=2$:}
\item[(xiii)] $\PSU(3,2^m)$ with $m\mid n$ and $n/m$ an odd prime;
\item[(xiv)] subgroups containing $\PSU(3,2^m)$ as a normal subgroup of index $3$, when $n=3m$ with $m$ odd;
\item[(xv)] a group of order $36$ when $n=1$.
\end{itemize}
\end{theorem}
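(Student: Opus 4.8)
The plan is to prove the classification by combining Aschbacher's theorem on subgroups of the finite classical groups with the explicit geometry of the action on $\PG(2,q^2)$, in the spirit of Mitchell's original arguments. Aschbacher's theorem guarantees that any subgroup $H\leq\PSU(3,q)$ either preserves one of a short list of geometric structures on the natural unitary module (the geometric classes $\mathcal{C}_1,\dots,\mathcal{C}_8$) or else is almost simple and acts absolutely irreducibly modulo scalars (the class $\mathcal{S}$). I would organize the whole argument around this dichotomy, treating the geometric families first and the almost simple irreducible ones last, and then for each surviving candidate deciding its genuine maximality together with the exact arithmetic conditions on $q$ under which it occurs.

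First I would dispose of the geometric classes. The reducible subgroups ($\mathcal{C}_1$) are the stabilizers of an isotropic or a non-isotropic point of the Hermitian form; translating these into the geometry of $\cH_q$ gives respectively a point on $\cH_q$ (item (i)) and a chord of $\cH_q(\fqs)$, equivalently an $\fqs$-rational point off $\cH_q$ (item (ii)). The imprimitive class ($\mathcal{C}_2$), that is, stabilizers of a decomposition of the module into three non-degenerate lines, yields the stabilizer of a self-polar triangle (item (iii)), while the field-extension class ($\mathcal{C}_3$) yields the normalizer of a cyclic Singer subgroup permuting a Frobenius-stable triangle defined over $\PG(2,q^6)\setminus\PG(2,q^2)$ (item (iv)). The subfield class ($\mathcal{C}_5$) produces the subgroups $\PSU(3,p^m)$ together with their index-$3$ normalizers under the stated divisibility constraints on $m$, $n$ and $q+1$ (items (vi), (vii) for $p$ odd and (xiii), (xiv) for $p=2$), as well as the subgroup $\PGL(2,q)$ stabilizing a conic defined over $\fq$ (item (v), arising from the orthogonal form in odd characteristic); finally the normalizer of a symplectic-type normal $3$-subgroup ($\mathcal{C}_6$) yields the Hessian groups (item (viii)) precisely under the congruence conditions on $q+1$.

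The delicate part is the class $\mathcal{S}$: here I would have to determine exactly which almost simple groups admit a faithful, absolutely irreducible $3$-dimensional projective representation preserving a non-degenerate Hermitian form over $\fqs$. I would invoke the classification of the finite subgroups of $\PGL(3,\mathbb{C})$ due to Blichfeldt and its reduction modulo $p$, which narrows the candidates to $\PSL(2,7)$, $\mathbf{A}_6$ together with its extensions, $\mathbf{A}_7$, and a sporadic group of order $36$. For each candidate the key computation is to read off, from the ($p$-modular) character table, when the relevant $3$-dimensional representation is realized over $\fqs$ and is unitary rather than orthogonal or linear; this is exactly what produces the arithmetic conditions recorded in items (ix)--(xii) and (xv), such as $-7$ being a non-square, $5$ being a square with no cube root of unity in $\fq$, or $p=5$ with $n$ odd.

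Finally I would settle maximality, which I expect to be the main obstacle. Having a complete list of candidate subgroups is not sufficient: one must rule out containments among them and containments inside the geometric stabilizers, and one must check for each family the exact range of $q$ in which it is maximal. This last step is subtle precisely because the almost simple groups of class $\mathcal{S}$ can fail to be maximal for sporadic small values of $q$ (being then absorbed into a subfield group or a Hessian group) and because several of the embeddings fuse under the action of the full group $\PGU(3,q)$ and of the field automorphisms, so that the up-to-conjugacy bookkeeping, rather than the identification of the candidates, is where the real work lies.
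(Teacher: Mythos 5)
You should first be aware that the paper does not prove this statement at all: Theorem \ref{Mit} is quoted as a known classification, attributed to Mitchell \cite{M} and Hartley \cite{H} (with \cite{HO} and the Aschbacher-style treatment in \cite{KL} as further references). So there is no internal proof to compare against; the relevant comparison is with the cited literature. Your outline is essentially the Kleidman--Liebeck route that the paper explicitly mentions as the modern alternative: reduce via Aschbacher's theorem to the geometric classes $\mathcal{C}_1$--$\mathcal{C}_8$ plus the class $\mathcal{S}$ of almost simple irreducible subgroups, identify the geometric stabilizers with items (i)--(viii), (xiii), (xiv), and handle $\mathcal{S}$ via Blichfeldt's classification of finite subgroups of $\PGL(3,\mathbb{C})$ and its reduction mod $p$. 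The original Mitchell--Hartley proofs instead proceed by a direct, pre-Aschbacher analysis of ternary collineation groups (fixed points, triangles, conics, Sylow structure), which is why the paper prefers them for readability of the geometric actions. Your identification of the classes is essentially right (note that in the Kleidman--Liebeck convention the conic stabilizer $\mathrm{SO}(3,q)\cong\PGL(2,q)$ sits in $\mathcal{C}_5$ together with the subfield groups, as you have it, and the Hessian groups in $\mathcal{C}_6$).

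The genuine gap is that what you have written is a plan, not a proof, and the parts you defer are precisely the content of the theorem. Concretely: (a) the determination of class $\mathcal{S}$ requires carrying out the modular character-theoretic computations that produce the exact arithmetic conditions in items (ix)--(xii) and (xv) (e.g.\ that $\PSL(2,7)$ occurs exactly when $p=7$ or $-7$ is a non-square, that the $720$-extension of $\mathbf{A}_6$ and $\mathbf{A}_7$ occur only for $p=5$ with $n$ odd), and none of these computations is performed; (b) the maximality and conjugacy bookkeeping --- ruling out containments of the $\mathcal{S}$-candidates in subfield or Hessian groups for small $q$, determining when $\PSU(3,p^m)$ itself is maximal versus being properly contained in its index-$3$ extension, and counting conjugacy classes under $\PGU(3,q)$ and field automorphisms --- is acknowledged but not done. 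You correctly identify this as ``where the real work lies,'' but that is exactly an admission that the proof is not given. As a guide to how one would reprove the theorem today your outline is sound; as a proof it is incomplete, and in the context of this paper the honest move is the one the authors make, namely to cite \cite{M}, \cite{H}, \cite{HO}, \cite{KL}.
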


We also need the classification of subgroups of the projective general linear group $\PGL(2,q)$:
%, going back to Dickson; see \cite[Chapt. XII, Par. 260]{D} and \cite[Thm. A.8]{HKT}.

\begin{theorem}\label{Di}{\rm (see \cite[Chapt. XII, Par. 260]{D}, \cite[Kap. II, Hauptsatz 8.27]{Hup}, \cite[Thm. A.8]{HKT})}
The following is the complete list of subgroups of $\PGL(2,q)$ up to conjugacy:
\begin{itemize}
\item[(i)] the cyclic group of order $h$ with $h\mid(q\pm1)$;
\item[(ii)] the elementary abelian $p$-group of order $p^f$ with $f\leq n$;
\item[(iii)] the dihedral group of order $2h$ with $h\mid(q\pm1)$;
\item[(iv)] the alternating group $\mathbf{A}_4$ for $p>2$, or $p=2$ and $n$ even;
\item[(v)] the symmetric group $\mathbf{S}_4$ for $16\mid(q^2-1)$;
\item[(vi)] the alternating group $\mathbf{A}_5$ for $p=5$ or $5\mid(q^2-1)$;
\item[(vii)] the semidirect product of an elementary abelian $p$-group of order $p^f$ by a cyclic group of order $h$, with $f\leq n$ and $h\mid\gcd(p^f-1,q-1)$;
\item[(viii)] $\PSL(2,p^f)$ for $f\mid n$;
\item[(ix)] $\PGL(2,p^f)$ for $f\mid n$.
\end{itemize}
\end{theorem}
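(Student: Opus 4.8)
The plan is to exploit the sharply $3$-transitive action of $\PGL(2,q)$ on the projective line $\PG(1,q)$, which has $q+1$ points, and to classify a finite subgroup $G$ through the fixed-point behaviour of its elements over $\bar{\F}_q$. First I would record the element types: a nontrivial element of $\PGL(2,q)$ is either \emph{unipotent} (order $p$, a single fixed point on $\PG(1,q)$), \emph{split semisimple} (two $\F_q$-rational fixed points, order dividing $q-1$), or \emph{nonsplit semisimple} (a pair of fixed points conjugate over $\F_{q^2}$, order dividing $q+1$). Consequently a Sylow $p$-subgroup $P$ of $\PGL(2,q)$ is the elementary abelian group of ``translations'' $t\mapsto t+b$ fixing one prescribed point, it has order $q=p^n$, and every $p$-element fixes a \emph{unique} point. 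This already explains the ranges $h\mid(q\pm1)$ and $f\le n$ in the statement, and it tells us that the whole classification is governed by the Sylow $p$-subgroup of $G$.

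The main division is then according to whether $p$ divides $|G|$. Suppose first $p\nmid|G|$, so every nontrivial element of $G$ is semisimple and fixes exactly two points of $\PG(1,\bar{\F}_q)$. Here I would run the classical orbit-counting argument that classifies the finite subgroups of $\mathrm{PGL}(2,\mathbb{C})$: one partitions the nontrivial elements by the unordered pair of points they fix, lets $G$ permute the resulting set of ``poles'', and applies the Burnside orbit-counting formula. The resulting Diophantine identity relating $|G|$, the number of pole-orbits and the point-stabilizer orders has only finitely many solutions, forcing $G$ to be cyclic, dihedral, $\mathbf{A}_4$, $\mathbf{S}_4$ or $\mathbf{A}_5$; comparing the cyclic and dihedral orders with the admissible orders of semisimple elements reproduces the divisibility $h\mid(q\pm1)$ of (i) and (iii), and the exceptional cases give (iv)--(vi) under the stated arithmetic conditions.

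Now suppose $p\mid|G|$ and let $1\neq P$ be a Sylow $p$-subgroup of $G$, fixing a unique point $x_0\in\PG(1,q)$. If $P\trianglelefteq G$ then $G$ fixes $x_0$, so $G$ is contained in the one-point stabilizer $\{t\mapsto at+b:a\in\F_q^{*},\,b\in\F_q\}\cong\F_q\rtimes\F_q^{*}$; its subgroups are precisely the semidirect products of an elementary abelian $p$-group of order $p^f$ by a cyclic group of order $h\mid\gcd(p^f-1,q-1)$, i.e.\ cases (ii) and (vii). If $P$ is not normal, then $G$ contains at least two Sylow $p$-subgroups with distinct fixed points, the group they generate fixes no point, and one shows that $G$ then contains a subfield copy $\PSL(2,p^f)$ and equals $\PSL(2,p^f)$ or $\PGL(2,p^f)$, giving (viii)--(ix). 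One caveat I would flag is that the exceptional types $\mathbf{A}_4,\mathbf{S}_4,\mathbf{A}_5$ can, for $p\in\{2,3,5\}$, also be realized inside the Borel subgroup above (e.g.\ $\mathbf{A}_4=V_4\rtimes C_3$ when $p=2$ and $3\mid(q-1)$); this overlap is exactly what the side-conditions ``$p=2$ and $n$ even'', ``$16\mid(q^2-1)$'', ``$5\mid(q^2-1)$'' in (iv)--(vi) encode, and the bookkeeping must be arranged so that each isomorphism type is listed once.

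The hard part is the non-normal Sylow case: extracting the subfield $\F_{p^f}$ and proving $\langle P,P^{g}\rangle\cong\PSL(2,p^f)$ for a suitable $g\in G$. This is the technical core of Dickson's theorem; it requires a careful count of the $p$-elements and involutions of $G$, control of $N_G(P)$ and of the intersection pattern of the distinct Sylow $p$-subgroups, so as to reconstruct the standard unipotent generators of a $\PSL(2,p^f)$ and pin down its field of definition. By comparison, the coprime case and the normal-$P$ case are routine once the element classification and the Sylow description are in hand.
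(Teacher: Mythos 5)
The paper offers no proof of this statement: it is Dickson's classical classification, quoted with references to \cite{D}, \cite{Hup}, \cite{HKT}, so there is no internal argument to compare yours against. Judged on its own terms, your outline follows the standard strategy (element types via fixed points on the projective line, the Klein-style pole-orbit count when $p\nmid|G|$, reduction to the Borel subgroup when a Sylow $p$-subgroup is normal), and those two branches are essentially sound modulo routine verification (e.g.\ you still owe the argument that a subgroup of $\F_q\rtimes\F_q^*$ splits as $E_{p^f}\rtimes C_h$ with $h\mid\gcd(p^f-1,q-1)$, and the passage from isomorphism types to conjugacy classes).

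The genuine gap is in the third branch, and it is not only the omission you yourself acknowledge. The claim that a non-normal Sylow $p$-subgroup forces $G$ to contain, and then equal, a subfield subgroup $\PSL(2,p^f)$ or $\PGL(2,p^f)$ is false. For $p=2$, a dihedral subgroup of order $2h$ with $h\mid(q\pm1)$, $h>1$ odd, has $h$ non-normal Sylow $2$-subgroups of order $2$ and is a subfield subgroup only when $h=3$; for $p=3$, the subgroup $\mathbf{A}_5\leq\PGL(2,3^n)$ ($n$ even) has non-normal Sylow $3$-subgroups yet is isomorphic to no $\PSL(2,3^f)$ or $\PGL(2,3^f)$. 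So the dihedral and exceptional groups reappear inside this branch rather than being confined to the $p\nmid|G|$ branch, and the case analysis must be refined in the way Dickson and Huppert do, splitting further on the order of the Sylow $p$-subgroup and on how distinct Sylows intersect. Since you explicitly defer this ``technical core'' to Dickson, the proposal as written is not a proof; given that the paper itself treats the theorem as a black-box citation, the appropriate course is to cite it likewise rather than to reprove it with an incorrect trichotomy.
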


In our investigation it is useful to know how an element of $\PGU(3,q)$ of a given order acts on $\PG(2,\bar{\mathbb{F}}_{q^2})$, and in particular on $\cH_q$. This can be obtained as a corollary of Theorem \ref{Mit}, and is stated in Lemma $2.2$ with the usual terminology of collineations of projective planes; see \cite{HP}. In particular, a linear collineation $\sigma$ of $\PG(2,\bar{\mathbb{F}}_{q^2})$ is a $(P,\ell)$-\emph{perspectivity} if $\sigma$ preserves  each line through the point $P$ (the \emph{center} of $\sigma$), and fixes each point on the line $\ell$ (the \emph{axis} of $\sigma$). A $(P,\ell)$-perspectivity is either an \emph{elation} or a \emph{homology} according to $P\in \ell$ or $P\notin\ell$, respectively. A $(P,\ell)$-perspectivity is in  $\PGL(3,q^2)$ if and only if its center and its axis are in $\PG(2,\mathbb{F}_{q^2})$.
%This classification result was obtained in \cite{MZRS}.

Lemma \ref{classificazione} gives a classification of nontrivial elements in $\PGU(3,q)$, we will refer to throughout the paper.
\begin{lemma}\label{classificazione}{\rm (\!\!\cite[Lemma 2.2]{MZRS})}
For a nontrivial element $\sigma\in\PGU(3,q)$, one of the following cases holds.
\begin{itemize}
\item[(A)] ${\rm ord}(\sigma)\mid(q+1)$ and $\sigma$ is a homology, whose center $P$ is a point of $\PG(2,q^2)\setminus\cH_q$ and whose axis $\ell$ is a chord of $\cH_q(\mathbb{F}_{q^2})$ such that $(P,\ell)$ is a pole-polar pair with respect to the unitary polarity associated to $\cH_q(\mathbb{F}_{q^2})$.
\item[(B)] $p\nmid{\rm ord}(\sigma)$ and $\sigma$ fixes the vertices $P_1$, $P_2$, $P_3$ of a non-degenerate triangle $T\subset\PG(2,q^6)$.
\begin{itemize}
\item[(B1)] ${\rm ord}(\sigma)\mid(q+1)$, $P_1,P_2,P_3\in\PG(2,q^2)\setminus\cH_q$, and $T$ is self-polar with respect to the unitary polarity associated to $\cH_q(\mathbb{F}_{q^2})$.
\item[(B2)] ${\rm ord}(\sigma)\mid(q^2-1)$, ${\rm ord}\nmid(q+1)$, $P_1\in\PG(2,q^2)\setminus\cH_q$, and $P_2,P_3\in\cH_q(\fqs)$. %and the polar lines of $P_1,P_2,P_3$ are $P_2P_3$, $P_1P_2$, $P_1P_3$, respectively.
\item[(B3)] ${\rm ord}(\sigma)\mid(q^2-q+1)$, and $P_1,P_2,P_3\in\cH_q(\mathbb{F}_{q^6})\setminus\cH_q(\mathbb{F}_{q^2})$.
\end{itemize}
\item[(C)] ${\rm ord}(\sigma)=p$ and $\sigma$ is an elation, whose center $P$ is a point of $\cH_q(\fqs)$ and whose axis $\ell$ is tangent to $\cH_q$ at $P$ such that $(P,\ell)$ is a pole-polar pair with respect to the unitary polarity associated to $\cH_q(\mathbb{F}_{q^2})$.
\item[(D)] wither ${\rm ord}(\sigma)=p$ with $p\ne2$, or ${\rm ord}(\sigma)=4$ with $p=2$; $\sigma$ fixes a point $P\in\cH_q(\fqs)$ and a line $\ell$ which is tangent to $\cH_q$ at $P$, such that $(P,\ell)$ is a pole-polar pair with respect to the unitary polarity associated to $\cH_q(\mathbb{F}_{q^2})$.
\item[(E)] ${\rm ord}(\sigma)=p\cdot d$, where $1\ne d\mid(q+1)$; $\sigma$ fixes two points $P\in\cH_q(\fqs)$ and $Q\in\PG(2,q^2)\setminus\cH_q$; $\sigma$ fixes the line $PQ$ which is the tangent to $\cH_q$ at $P$, and another line through $P$ which is the polar of $Q$.
\end{itemize}
\end{lemma}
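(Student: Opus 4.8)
The plan is to lift $\sigma$ to a matrix $M\in GU(3,q)$ (unique up to a scalar in the group $\mu_{q+1}$ of $(q+1)$-th roots of unity) and to exploit the multiplicative Jordan decomposition $M=M_sM_u$ into commuting semisimple and unipotent parts, both of which again lie in $GU(3,q)$ and preserve the Hermitian form $h$ fixed by $M$. The geometry of $\sigma$ on $\PG(2,\bar{\mathbb F}_{q^2})$ --- its fixed points and fixed lines, and their position relative to $\cH_q$ --- is then read off from the eigenstructure of $M_s$ together with the nilpotent shape of $M_u$, while the arithmetic conditions on $\ord(\sigma)$ come from the orders of the maximal tori and of the Sylow $p$-subgroup of a point-stabilizer, both of which are encoded in Theorem \ref{Mit}.

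First I would treat the \emph{semisimple} case $p\nmid\ord(\sigma)$, so that $M=M_s$ is diagonalizable over $\bar{\mathbb F}_q$. The basic computation is that if $v,w$ are eigenvectors with eigenvalues $\lambda,\mu$, then $h(Mv,Mw)=\lambda\mu^q\,h(v,w)=h(v,w)$, whence $h(v,w)=0$ unless $\lambda\mu^q=1$; in particular $h(v,v)=0$ forces $v\in\cH_q$, while $h(v,v)\ne0$ forces $\lambda^{q+1}=1$. Splitting by the number of distinct eigenvalues then yields the dichotomy: a repeated eigenvalue gives a pointwise-fixed line and an isolated fixed point, i.e. a homology with $\ord(\sigma)\mid(q+1)$, which is case (A), the non-isotropic centre and its polar chord forming a pole-polar pair; three distinct eigenvalues make $\sigma$ regular semisimple, fixing the non-degenerate triangle $T$ spanned by the eigenvectors. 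The subcases (B1)--(B3) are then separated by the field generated by the eigenvalues, equivalently by the type of maximal torus containing $\sigma$: a split torus of order $(q+1)^2$ (all eigenvectors non-isotropic, self-polar triangle over $\mathbb F_{q^2}$, $\ord\mid q+1$), a torus of order $q^2-1$ (one non-isotropic and two isotropic fixed points, $\ord\mid q^2-1$ but $\ord\nmid q+1$), and the non-split Singer torus of order $q^2-q+1$ (a Frobenius orbit of three isotropic points over $\mathbb F_{q^6}$).

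Next I would treat the \emph{unipotent} case $\ord(\sigma)=p$ (together with the exceptional order $4$ when $p=2$), where $M=M_u=I+N$ with $N$ nilpotent. The Jordan type of $N$ is a partition of $3$: type $(2,1)$ gives a two-dimensional fixed space, so $\sigma$ fixes a line pointwise and is an \emph{elation}, while the single block $(3)$ gives a one-dimensional fixed space and a non-perspective $\sigma$. Computing $(I+N)^p$ shows that type $(2,1)$ always has order $p$, whereas the single block has order $p$ for odd $p$ and order $4$ for $p=2$; this is exactly the split between (C) and (D). The unitary constraint $M^\dagger HM=H$ forces the unique fixed point to be isotropic and its fixed line to be the tangent there, giving the stated pole-polar and tangency data. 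Finally, the \emph{mixed} case has both $M_s\ne I$ and $M_u\ne I$; since $M_s$ and $M_u$ commute they share a common fixed flag, $M_u$ pins down an isotropic fixed point $P\in\cH_q$ and $M_s$ contributes a non-isotropic fixed point $Q$ off $\cH_q$, and tracking orders gives $\ord(\sigma)=p\cdot d$ with $1\ne d\mid(q+1)$, which is case (E); the two fixed lines are the tangent $PQ$ at $P$ and the polar of $Q$.

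I expect the main obstacle to be the sharp bookkeeping in the semisimple case: proving that the three order-conditions $\ord\mid(q+1)$, $\ord\mid(q^2-1)$ and $\ord\mid(q^2-q+1)$ are both necessary and mutually exclusive requires identifying precisely which torus contains $\sigma$ from the Galois action on its eigenvectors, and checking that the relevant fixed points are rational exactly over $\mathbb F_{q^2}$, $\mathbb F_{q^2}$ and $\mathbb F_{q^6}$ respectively. A secondary delicate point is the $p=2$ anomaly in case (D), where one must verify that the single Jordan block genuinely has order $4$ and still fixes only the tangent pole-polar pair, rather than collapsing to an elation.
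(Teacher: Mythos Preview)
The paper does not actually prove this lemma: it is quoted verbatim from \cite[Lemma~2.2]{MZRS}, and the only justification offered here is the remark preceding the statement that it ``can be obtained as a corollary of Theorem~\ref{Mit}'', i.e.\ from the Mitchell--Hartley list of maximal subgroups of $\PSU(3,q)$. So there is nothing to compare line by line.

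Your route is genuinely different and more self-contained. Instead of locating $\langle\sigma\rangle$ inside one of the maximal subgroups of Theorem~\ref{Mit} and reading off the geometry from the known stabilizers, you work directly with a lift $M\in{\rm GU}(3,q)$ and its multiplicative Jordan decomposition $M=M_sM_u$, then translate eigenstructure and nilpotent type into fixed points, fixed lines, and isotropy via the identity $h(Mv,Mw)=\lambda\mu^q h(v,w)$. This buys you an elementary linear-algebra proof that does not rely on the subgroup classification; conversely, the paper's route (once Theorem~\ref{Mit} is granted) makes the torus/order dichotomy for (B1)--(B3) immediate, since those three cases correspond exactly to the maximal subgroups (iii), (ii), (iv).

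Two small points to watch. First, the order conditions in (B1)--(B3) are \emph{not} mutually exclusive purely arithmetically: when $3\mid(q+1)$ one has $3\mid\gcd(q+1,q^2-q+1)$, and indeed there exist order-$3$ elements of type (B1) and others of type (B3) (this is exactly why Theorem~\ref{caratteri} lists case~2 separately). So in your semisimple analysis the split must be made geometrically, by the Galois action on the eigenlines, not by $\ord(\sigma)$ alone. Second, in the mixed case (E) you should note explicitly that commutation of $M_s$ with a nontrivial $M_u$ forces $M_s$ to have a repeated eigenvalue, hence to be (projectively) a homology; this is what pins down the non-isotropic fixed point $Q$ and the divisibility $d\mid(q+1)$, and it also excludes the single Jordan block for $M_u$ here, so that $\ord(\sigma)=p\cdot d$ rather than $4d$ when $p=2$.
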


From Function Field Theory we need the Riemann-Hurwitz formula; see \cite[Theorem 3.4.13]{Sti}. Every subgroup $G$ of $\PGU(3,q)$ produces a quotient curve $\cH_q/G$, and the cover $\cH_q\rightarrow\cH_q/G$ is a Galois cover defined over $\mathbb{F}_{q^2}$. Also, the degree $\Delta$ of the different divisor is given by the Riemann-Hurwitz formula, namely
$$\Delta=(2g(\cH_q)-2)-|G|(2g(\cH_q/G)-2).$$
On the other hand, $\Delta=\sum_{\sigma\in G\setminus\{id\}}i(\sigma)$ where $i(\sigma)\geq0$ is given by the Hilbert's different formula \cite[Thm. 3.8.7]{Sti}, namely
\begin{equation}\label{contributo}
\textstyle{i(\sigma)=\sum_{P\in\cH_q(\bar\F_{q^2})}v_P(\sigma(t)-t),}
\end{equation}
where $v_P(\cdot)$ denotes the valuation function at $P$ and $t$ is a local parameter at $P$, i.e. $v_P(t)=1$.

By analyzing the geometric properties of the elements $\sigma \in \PGU(3,q)$, it turns out that there are only a few possibilities for $i(\sigma)$.
This is obtained as a corollary of Lemma \ref{classificazione}.

\begin{theorem}\label{caratteri}{\rm (\!\!\cite[Theorem 2.7]{MZRS})}
For a nontrivial element $\sigma\in \PGU(3,q)$, one of the following cases occurs.
\begin{enumerate}
\item If $\ord(\sigma)=2$ and $2\mid(q+1)$, then $\sigma$ is of type {\rm(A)} and $i(\sigma)=q+1$.
\item If $\ord(\sigma)=3$, $3 \mid(q+1)$, and $\sigma$ is of type {\rm(B3)}, then $i(\sigma)=3$.
\item If $\ord(\sigma)\ne 2$, $\ord(\sigma)\mid(q+1)$, and $\sigma$ is of type {\rm(A)}, then $i(\sigma)=q+1$.
\item If $\ord(\sigma)\ne 2$, $\ord(\sigma)\mid(q+1)$, and $\sigma$ is of type {\rm(B1)}, then $i(\sigma)=0$.
\item If $\ord(\sigma)\mid(q^2-1)$ and $\ord(\sigma)\nmid(q+1)$, then $\sigma$ is of type {\rm(B2)} and $i(\sigma)=2$.
\item If $\ord(\sigma)\ne3$ and $\ord(\sigma)\mid(q^2-q+1)$, then $\sigma$ is of type {\rm(B3)} and $i(\sigma)=3$.
\item If $p=2$ and $\ord(\sigma)=4$, then $\sigma$ is of type {\rm(D)} and $i(\sigma)=2$.
\item If $p\ne2$, $\ord(\sigma)=p$, and $\sigma$ is of type {\rm(D)}, then $i(\sigma)=2$.
\item If $\ord(\sigma)=p$ and $\sigma$ is of type {\rm(C)}, then $i(\sigma)=q+2$.
\item If $\ord(\sigma)=p\cdot d$ where $d$ is a nontrivial divisor of $q+1$, then $\sigma$ is of type {\rm(E)} and $i(\sigma)=1$.
\end{enumerate}
\end{theorem}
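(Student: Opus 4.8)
The plan is to evaluate the Hilbert different formula \eqref{contributo} for $i(\sigma)$ by localizing at the fixed points of $\sigma$ on $\cH_q$. The starting observation is that the summand $v_P(\sigma(t)-t)$ in \eqref{contributo} vanishes unless $\sigma$ fixes the place $P$, so only the points of $\cH_q(\bar\F_{q^2})$ fixed by $\sigma$ contribute; but the geometric description of these fixed points is exactly what Lemma \ref{classificazione} supplies for each type. The whole argument therefore splits into the \emph{tame} regime $p\nmid\ord(\sigma)$ (types (A), (B1), (B2), (B3)) and the \emph{wild} regime $p\mid\ord(\sigma)$ (types (C), (D), (E)), and at each fixed point I must identify the local contribution $i_P(\sigma)=v_P(\sigma(t)-t)$.

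In the tame regime I would first record the general fact that a nontrivial automorphism of order prime to $p$ fixing a point $P$ acts on the one-dimensional cotangent space at $P$ by a root of unity $\zeta\neq1$: were this action trivial, tameness would let one linearize and conclude $\sigma=\mathrm{id}$ near $P$, hence $\sigma=\mathrm{id}$. Consequently $\sigma(t)=\zeta t+O(t^2)$ with $\zeta\neq1$, so $i_P(\sigma)=v_P\big((\zeta-1)t+\cdots\big)=1$ at \emph{every} fixed point lying on the curve. It then only remains to count the fixed points of $\sigma$ on $\cH_q$, which Lemma \ref{classificazione} hands over directly: for type (A) the axis is a chord, meeting $\cH_q(\F_{q^2})$ in $q+1$ points, giving $i(\sigma)=q+1$ (cases 1 and 3); for type (B1) all three vertices of the self-polar triangle lie off $\cH_q$, giving $i(\sigma)=0$ (case 4); for type (B2) exactly the vertices $P_2,P_3$ lie on $\cH_q$, giving $i(\sigma)=2$ (case 5); and for type (B3) all three vertices lie on $\cH_q(\F_{q^6})$, giving $i(\sigma)=3$ (cases 2 and 6). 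Here one need only check that, away from the homology case, the element is regular semisimple, so its fixed locus is precisely the three triangle vertices and no spurious fixed point on $\cH_q$ appears.

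In the wild regime I would pass to the Norm--Trace model \eqref{M2}, written affinely as $y^q+y=x^{q+1}$ with the distinguished point $P_\infty$, where $v_{P_\infty}(x)=-q$, $v_{P_\infty}(y)=-(q+1)$ and $t=x/y$ is a uniformizer. After conjugating so that the fixed point on $\cH_q$ is $P_\infty$, each type admits an explicit representative inside the stabilizer of $P_\infty$: the central elation $(x,y)\mapsto(x,y+c)$ with $0\neq c$, $c^q+c=0$, for type (C); the noncentral unipotent map $(x,y)\mapsto(x+b,\,y+b^qx+c)$ with $b\neq0$ and $c^q+c=b^{q+1}$ for type (D); and $(x,y)\mapsto(\lambda x,\,y+c)$ with $\lambda$ a nontrivial $(q+1)$-th root of unity and $0\neq c$, $c^q+c=0$, for type (E). In each case a direct computation of $v_{P_\infty}(\sigma(t)-t)$, tracking the pole orders of the numerator and the denominator of $\sigma(t)-t$, yields $q+2$, $2$ and $1$ respectively; since for all three the relevant tangent or axis meets $\cH_q$ only at $P_\infty$, this single local contribution is all of $i(\sigma)$ (cases 9, 7--8, and 10).

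The main obstacle is the wild computation, above all the elation of type (C): here the ramification is genuinely higher than tame, and the value $q+2$ emerges only from careful bookkeeping of $v_{P_\infty}(x)=-q$ and $v_{P_\infty}(y)=-(q+1)$ in $\sigma(t)-t=-cx\big/\big(y(y+c)\big)$, where the naive expectation of contribution $2$ (as for type (D)) would be wrong. A secondary point requiring care is verifying that the chosen matrices genuinely realize each geometric type of Lemma \ref{classificazione}---that is, that their Jordan form produces precisely the claimed configuration of fixed points on and off $\cH_q$---so that no fixed point on the curve is overlooked and the local count is complete.
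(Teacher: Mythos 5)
Your proposal is correct, and it follows essentially the same route as the proof the paper relies on (the theorem is quoted from \cite[Theorem 2.7]{MZRS}, whose argument likewise combines the fixed-point configurations of Lemma \ref{classificazione} with Hilbert's different formula: contribution $1$ at each fixed point in the tame cases, and explicit valuation computations at $P_\infty$ in the Norm--Trace model for the wild types (C), (D), (E)). Your local computations, in particular $v_{P_\infty}(\sigma(t)-t)=q+2$ for the elation and the verification that each wild representative has the claimed unique fixed point on $\cH_q$, check out.
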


%Groups fixing an $\fqs$-rational point of $\cH_q$ are investigated in \cite{GSX}.

%\begin{theorem}\label{fissa2}
%{\rm \cite[Thm. 3.3 and Eq. (2.12)]{GSX}}
%Let $p=2$. For a positive integer $g$, the following assertions are equivalent.
%\begin{enumerate}
%\item There exists a $2$-subgroup $G\leq\PGU(3,q)$ such that $g=g(\cH_q/G)$.
%\item $g=2^{n-v-1}(2^{n-w}-1)$ with $0\le v\le n-1$ and $0\le w\le n-1$, and there exist additive subgroups $V\subseteq\fqs$ and $W\subseteq\F_q$ of order $\ord(V)=2^v$ and $\ord(W)=2^w$, such that $V^{q+1}=\{b^{q+1}\mid b\in V\}$ is contained in $W$.
%\end{enumerate}
%\end{theorem}

%\begin{theorem}\label{fissa}
%{\rm \cite[Thm. 4.4 and Eq. (2.12)]{GSX}}
%Let $G$ fix an $\fqs$-rational point $P\in\cH_q$, and let $|G|=m\cdot p^u$ with $m>1$, $m$ coprime with $p$. Then $\cH_q/G$ has genus
%$$ g(\cH_q/G)=\frac{\left(q-p^w\right)\left(q-(\gcd(m,q+1)-1)p^v\right)}{2mp^u}\,, $$
%where $v,w$ are non-negative integers such that $v+w=u$.
%\end{theorem}

%Assume that $G$ satisfies the hypotheses of Theorem \ref{fissa} and let $\cH_q$ have equation (\ref{M2}). Up to conjugation $P=(1,0,0)$ and the elements of $G$ have the form
%$$ \tau_{a,b,c}=\begin{pmatrix} a^{q+1} & b^q & c \\ 0 & a & b \\ 0 & 0 & 1\end{pmatrix}, $$
%with $a,b,c\in\fqs$, $a\ne0$, $b^{q+1}=c^q+c$.
%Then the additive subgroups $\{b\in\fqs\mid\tau_{1,b,c}\in G\}$ and $\{c\in\fqs\mid\tau_{1,0,c}\in G\}$ of $\fqs$ have order $p^v$ and $p^w$, respectively.
% In particular, the number of nontrivial elements $\sigma\in G$ with $i(\sigma)=q+2$ equals $p^w-1$.

\begin{remark}\label{abuso}
With a small abuse of notation, in the paper we denote by $\sigma$ the element $\bar{\sigma}=\sigma Z({\rm GU}(3,q))\in{\rm GU}(3,q)/Z({\rm GU}(3,q))=\PGU(3,q)$.
\end{remark}

\section{$G$ stabilizes a self-polar triangle $T\subset {PG}(2,q^2)\setminus\cH_q$}\label{sec:triangolo}

In this section we take $G$ to be contained in the maximal subgroup $M$ of $\PGU(3,q)$ stabilizing a self-polar triangle $T=\{P_1,P_2,P_3\}$ with respect to the unitary polarity associated to $\cH_q(\fqs)$.
Recall that $M\cong (C_{q+1}\times C_{q+1})\rtimes \mathbf{S}_3$, where $C_{q+1}\times C_{q+1}$ is the pointwise stabilizer of $T$ and $\mathbf{S}_3$ acts faithfully on $T$. We denote by $G_T=G\cap(C_{q+1}\times C_{q+1})$ the pointwise stabilizer of $T$ in $G$.

We use the plane model \eqref{M1} of $\cH_q$.
Up to conjugation in ${\rm PGU}(3,q)$, we can assume that $T$ is the fundamental triangle with $P_1=(1,0,0)$, $P_2=(0,1,0)$, $P_3=(0,0,1)$.
For each $\sigma\in C_{q+1}\times C_{q+1}$ we will consider as a representative of $\sigma$ the diagonal matrix having $(q+1)$-th roots of unity in the diagonal entries and $1$ in the third row and column.
We can choose as complement $\mathbf{S}_3$ the monomial group made by matrices with exactly one nonzero entry equal to $1$ in each row and column; any other complement $\mathbf{S}_3$ has $(q+1)$-th roots of unity as nonzero entries.

Partial results when the order of $G$ is coprime to $p$ were obtained in \cite{RobinHood} for the Fermat curve $\mathcal{F}_m : X^{m}+Y^{m}+Z^{m}=0$ with $p \nmid m$. 

Each result in Sections \ref{sec:punt} to \ref{sec:indice6} is stated as follows: firstly, sufficient numerical conditions for the existence of groups $G$ with a certain action on $T$ are given, and the genus of $\cH_q/G$ is provided; secondly, it is shown that groups $G$ with that given action on $T$ satisfy those numerical conditions.
In our investigation, the numerical conditions arise as necessary by the analysis of a putative group $G$ with a given action on $T$, and in a second moment they also appear as sufficient; neverthless, we choose to start the exposition of each result with the numerical conditions, showing firstly that they are sufficient and then that they are necessary, in order to make the proofs shorter and to highlight the purely numerical characterization that we obtained.

\subsection{$G$ stabilizes $T$ pointwise}\label{sec:punt}

In this section $G=G_T$, that is, $G\leq C_{q+1}\times C_{q+1}$.

\begin{theorem}\label{fissatorepuntuale}
Let $q+1=\prod_{i=1}^{\ell}p_i^{r_i}$ be the prime factorization of $q+1$.
\begin{itemize}
\item[(i)] For any divisors $a=\prod_{i=1}^{\ell}p_i^{s_i}$ and $b=\prod_{i=1}^{\ell}p_i^{t_i}$ of $q+1$ $($$0\leq s_i,t_i\leq r_i$$)$, let $c=\prod_{i=1}^{\ell}p_i^{u_i}$ be such that, for all $i=1,\ldots,\ell$, we have $u_i=\min\{s_i,t_i\}$ if $s_i\ne t_i$, and $s_i\leq u_i\leq r_i$ if $s_i=t_i$.
Define $d=a+b+c-3$.
Let $e=\frac{abc}{\gcd(a,b)}\cdot\prod_{i=1}^{\ell}p_i^{v_i}$, where for all $i$'s $v_i$ satisfies $0\leq v_i\leq r_i-\max\{s_i,t_i,u_i\}$.
We also require that, if $p_i=2$ and either $2\nmid abc$ or $2\mid\gcd(a,b,c)$, then $v_i=0$.
Then there exists a subgroup $G$ of $C_{q+1}\times C_{q+1}$ such that
\begin{equation}\label{generefissatore}
 g(\cH_q/G)=\frac{(q+1)(q-2-d)+2e}{2e}.
\end{equation}
\item[(ii)] Conversely, if $G\leq C_{q+1}\times C_{q+1}$, then the genus of $\cH_q/G$ is given by Equation \eqref{generefissatore}, where $e=|G|$ and $d$ is the number of elements of type {\rm (A)} in $G$; $d,e$ satisfy the numerical assumptions in point {\it (i)}, for some $a,b,c$.
\end{itemize}
\end{theorem}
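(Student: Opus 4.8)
The plan is to read off both the genus and the arithmetic constraints from the Riemann--Hurwitz formula, after translating the group data into the combinatorics of subgroups of $C_{q+1}\times C_{q+1}$. Since $G\le C_{q+1}\times C_{q+1}$ is abelian and fixes $T$ pointwise, every nontrivial $\sigma\in G$ is represented by $\diag(\lambda,\mu,1)$ with $\lambda,\mu\in\mu_{q+1}$, so by Lemma \ref{classificazione} it is of type {\rm(A)} (a homology: exactly two of the eigenvalues $\lambda,\mu,1$ coincide) or of type {\rm(B1)} (the three eigenvalues pairwise distinct). By Theorem \ref{caratteri} one has $i(\sigma)=q+1$ in the first case and $i(\sigma)=0$ in the second, so $\Delta=\sum_{\sigma\ne\mathrm{id}}i(\sigma)=d(q+1)$, where $d$ is the number of type {\rm(A)} elements. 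Substituting $g(\cH_q)=q(q-1)/2$ and $|G|=e$ into $\Delta=(2g(\cH_q)-2)-|G|(2g(\cH_q/G)-2)$ and solving for $g(\cH_q/G)$ yields exactly \eqref{generefissatore}, via the identity $(q+1)(q-2-d)=(q^2-q-2)-d(q+1)$. This settles the genus formula and the identification $e=|G|$.

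Next I would extract $a,b,c$. The homologies split, according to their center, into the three subgroups $H_1=\{(\lambda,1)\in G\}$, $H_2=\{(1,\mu)\in G\}$, $H_3=\{(\lambda,\lambda)\in G\}$ (centers $P_1,P_2,P_3$), which pairwise meet only in the identity; setting $a=|H_1|$, $b=|H_2|$, $c=|H_3|$ gives $d=(a-1)+(b-1)+(c-1)=a+b+c-3$, and each $H_j$ embeds in $\mu_{q+1}$, so $a,b,c\mid(q+1)$. Writing $G$ additively inside $(\mathbb{Z}/(q+1))^2$, these three subgroups are the intersections of $G$ with the lines $y=0$, $x=0$ and $x=y$. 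Because $G$ is the direct product of its Sylow subgroups $G_{p_i}\le C_{p_i^{r_i}}\times C_{p_i^{r_i}}$, and $a,b,c,e$ all factor through the primes, the whole verification reduces to a single prime $p=p_i$ with $r=r_i$, where $a_i=p^{s_i}$, $b_i=p^{t_i}$, $c_i=p^{u_i}$ and $e_i=p^{w_i}$.

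For a fixed prime I would prove the structural relations by elementary subtraction arguments inside $\mathbb{Z}/p^r\oplus\mathbb{Z}/p^r$. Let $S,T',U\le\mathbb{Z}/p^r$ denote the first, second and diagonal coordinate projections of the three intersections. The identities $(z,z)-(x,0)=(z-x,z)$, $(z,z)-(0,y)=(z,z-y)$ and $(x,0)+(0,y)=(x,y)$ give $U\cap S\subseteq T'$, $U\cap T'\subseteq S$ and $S\cap T'\subseteq U$; since all three are subgroups of a cyclic group they are totally ordered, and these inclusions force $u_i=\min\{s_i,t_i\}$ when $s_i\ne t_i$, and only $s_i\le u_i\le r_i$ when $s_i=t_i$. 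For the order, restricting the second projection $\pi_2$ to $G_{p_i}$ has kernel the $x$-axis intersection (order $p^{s_i}$), whence $w_i=s_i+\log_p|\pi_2(G_{p_i})|$. Bounding $\pi_2(G_{p_i})$ below (it contains the images of the diagonal and $y$-axis parts, giving $w_i\ge s_i+\max\{u_i,t_i\}$, and symmetrically) and above (it sits inside $\mathbb{Z}/p^r$, giving $w_i\le s_i+r_i$, and symmetrically), together with the relation just established for $u_i$, yields $\max\{s_i,t_i\}+u_i\le w_i\le\min\{s_i,t_i\}+r_i$, i.e. $e_i=\frac{a_ib_ic_i}{\gcd(a_i,b_i)}\,p^{v_i}$ with $0\le v_i\le r_i-\max\{s_i,t_i,u_i\}$. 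Multiplying over the primes recovers $d=a+b+c-3$ together with the claimed shape of $e$.

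The main obstacle is the refinement at $p=2$. Here the three distinguished directions $y=0$, $x=0$, $x=y$ are exactly the three subgroups of order $2$ in $C_{2^r}\times C_{2^r}$, so the coarse inclusion arguments above no longer leave the diagonal free: over characteristic $2$ the coefficient in $(z,z)-(x,0)$ degenerates modulo $2$, and the defining relations tend to force the diagonal intersection $U$ to grow, absorbing into $u_i$ the slack that for odd primes appears in the free exponent $v_i$. Pinning down precisely which quadruples $(s_i,t_i,u_i,w_i)$ occur for $p=2$ --- and hence the exact supplementary condition on $v_i$ --- requires a direct case analysis of rank-$2$ abelian $2$-groups according to the $2$-adic valuations of the relevant generators, and this is the step I expect to demand the most care.
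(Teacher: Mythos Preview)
Your treatment of the genus formula and of the necessity direction (ii) at odd primes is correct, and the projection argument (kernel of $\pi_2|_{G_{p_i}}$ is the $x$-axis intersection, image contains the $y$-axis and diagonal parts) is a somewhat more direct route than the paper's, which instead works with the quotient $G/K$ where $K=H_1H_2H_3$ and proves that each Sylow $p_i$-subgroup of $G/K$ is cyclic of order at most $p_i^{r_i-m_i}$. Both approaches localize to one prime at a time and yield the same bounds on $w_i$.

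There is, however, a genuine gap: you do not address part (i). Part (i) is an existence statement --- for every admissible tuple $(a,b,c,e)$ one must \emph{produce} a subgroup $G\le C_{q+1}\times C_{q+1}$ with exactly $d=a+b+c-3$ homologies and $|G|=e$. Your inequalities show only that every $G$ has parameters in the stated range; they do not show the range is filled. The paper handles this by an explicit construction: it takes $A=\langle\diag(\lambda,1,1)\rangle$, $B=\langle\diag(1,\mu,1)\rangle$, $C=\langle\diag(\nu,\nu,1)\rangle$ of orders $a,b,c$, and for each prime $p_i$ adjoins a cyclic group $H_i$ of order $p_i^{v_i+m_i}$ chosen so that $H_i\cap ABC$ has order exactly $p_i^{m_i}$ and $H_i$ contributes no new homologies; then $G=ABC\cdot\prod_i H_i$ is checked to have the required order and homology count. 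Your diagnosis of the $p=2$ obstruction is also slightly off: the subtraction identities $(z,z)-(x,0)=(z-x,z)$ etc.\ work verbatim at every prime, nothing degenerates. What is special about $2$ is Theorem~\ref{caratteri}(1): for odd $q$ every involution of $\PGU(3,q)$ is a homology, so the three involutions of $(\mathbb{Z}/2\mathbb{Z})^2$ already lie in $H_1\cup H_2\cup H_3$. This immediately forces $v_i=0$ when $2\nmid abc$ (a nontrivial Sylow $2$-subgroup of $G$ would contain an involution outside $K$), and when $2\mid\gcd(a,b,c)$ the paper shows that any $2$-element $\gamma\notin K$ with $\gamma^2\in K$ can be multiplied by a suitable element of $K$ to yield a homology whose order exceeds what $a,b,c$ allow, again a contradiction.
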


\begin{proof}
If $G\leq C_{q+1}\times C_{q+1}$ has order $e$ and contains exactly $d$ homologies, then any other $\sigma\in G$ satisfies $i(\sigma)=0$ by Theorem \ref{caratteri}; hence, Equation \eqref{generefissatore} follows from the Riemann-Hurwitz formula.
Thus, the claim is proved once we characterize the possible values of $d$ and $e$.
To this aim, we first study the subgroup $K$ of $G$ which is generated by the homologies of $G$; then we characterize the order of $G/K$ such that $G$ contains no homologies out of $K$.

{\it (i)}: let the parameters $a,b,c,e$ be as in point {\it (i)}.
Define $A=\{\diag(\lambda,1,1)\mid\lambda^{a}=1\}$, $B=\{\diag(1,\lambda,1)\mid\lambda^{b}=1\}$, and $C=\{\diag(\lambda,\lambda,1)\mid\lambda^{c}=1\}$.
For any $i=1,\ldots,\ell$, let $m_i=\max\{s_i,t_i,u_i\}$, and $\lambda_i,\mu_i\in\mathbb{F}_{q^2}$ have order $o(\lambda_i)=p_i^{v_i}$ and $o(\mu_i)=p_i^{v_i+m_i}$. Define $H_i\leq C_{q+1}\times C_{q+1}$ by
$$ H_i=\begin{cases} \langle\diag(\mu_i,\lambda_i,1)\rangle & \textrm{if }\; m_i=s_i>0; \\
\langle\diag(\lambda_i,\mu_i,1)\rangle & \textrm{if }\; m_i\ne s_i,\, m_i=t_i>0 ;\\
\langle\diag(\lambda_i\mu_i,\lambda_i^{-1}\mu_i,1)\rangle & \textrm{otherwise.}   \end{cases} $$
Now choose $G=ABC\cdot\prod_{i=1}^{\ell}H_i$.
It is not difficult to check that the homologies in $G$ are exactly the nontrivial elements of $(A\cup B\cup C)\setminus\{id\}$ and their number is $d=a+b+c-3$. Also, the order of $K=ABC$ is $\frac{abc}{\gcd(a,b)}$. The order of $\frac{G}{K}$ is $\prod_{i=1}^{\ell}p_i^{v_i}$, since $|H_i|=p_i^{v_i+m_i}$ and $|H_i\cap K|=p_i^{m_i}$.
Hence, $G$ has order $e$ and contains exactly $d$ homologies; the first part of the claim is then proved.

{\it (ii)}: let $G\leq C_{q+1}\times C_{q+1}$ with $|G|=e$.
Define the subgroups $A=\{\sigma\in G\mid\sigma=\diag(\lambda,1,1),\lambda^{q+1}=1\}$, $B=\{\sigma\in G\mid\sigma=\diag(1,\mu,1),\mu^{q+1}=1\}$, and $C=\{\sigma\in G\mid\sigma=\diag(\nu,\nu,1),\nu^{q+1}=1\}$ of $G$, of order $a=\prod_{i=1}^{\ell}p_i^{s_i}$, $b=\prod_{i=1}^{\ell}p_i^{t_i}$, and $c=\prod_{i=1}^{\ell}p_i^{u_i}$, respectively. Since $|A|$, $|B|$, and $|C|$ divide $q+1$, we have $s_i,t_i,u_i\leq r_i$.
The homologies of $G$ are exactly the nontrivial elements of $A\cup B\cup C$, and their number is $d=a+b+c-3$.
We have $\gcd(a,b)\mid c$, so that $\min\{s_i,t_i\}\mid u_i$. If $s_i\ne t_i$, say for instance $s_i<t_i$, then $u_i=s_i$; otherwise, $BC$ would contain a subgroup of $A$ of order $p_i^{\min\{t_i,u_i\}}>p_i^{s_i}$, a contradiction. Thus, $a,b,c$ satisfy the restrictions of point {\it (i)}.
Consider the group $G/K$, where $K=ABC$ has order $\frac{abc}{\gcd(a,b)}$.
Let $i\in\{1,\ldots,\ell\}$ be such that $v_i>0$, and $P_i$ be a Sylow $p_i$-subgroup of $G/K$, of order $|P_i|=p_i^{v_i}$.
\begin{itemize}
\item Suppose $p_i\nmid abc$ and $p_i\ne2$. Then $P_i$ is also a Sylow $p_i$-subgroup of $G$. If $v_i>r_i$, then $P_i$ is not cyclic and contains a subgroup $C_{p_i}\times C_{p_i}$; hence $|P_i\cap K|\geq p_i^2$, a contradiction. Hence, $v_i\leq r_i=r_i-\max\{s_i,t_i,u_i\}$.
\item Suppose $p_i=2\nmid abc$. Then $P_i$ is a Sylow $2$-subgroup of $G$ having trivial intersection with $K$ and containing an involution $\sigma$. By Theorem \ref{caratteri}, $\sigma$ is a homology, a contradiction.
\item Suppose $p_i\mid abc$ and $p_i\ne2$. Let $\alpha K\in G/K$ be a $p_i$-element, say $o(\alpha K)=p_i^{f}$. We can assume that $\alpha$ is a $p_i$-element. In fact, if $\alpha^{p_i^{f}}=\beta \gamma$ with $\beta$ a $p_i$-element and $p_i\nmid o(\gamma)$, then choose $\bar\gamma\in K$ with ${\bar\gamma}^{p_i^{f}}=\gamma^{-1}$ and replace $\alpha$ with $\alpha\bar\gamma$; now, $\alpha^{p_i^f}=\beta$ where $\beta\in K$ is a $p_i$-element.

We show that $o(\beta)=p_i^{m_i}$, where $m_i=\max\{s_i,t_i,u_i\}$.
Let $o(\beta)=p_i^{n_i}$, and suppose by contradiction that $n_i<m_i$.
If $m_i=\min\{s_i,t_i,u_i\}$, that is $s_i=t_i=u_i$, then $K$ contains all $p_i$-elements of $C_{q+1}\times C_{q+1}$ of order smaller than or equal to $p_i^{m_i}$, a contradiction to $\alpha^{p_i^{f-1}}\notin K$.
If $m_i>\min\{s_i,t_i,u_i\}$, then $K$ contains all $p_i$-elements of $C_{q+1}\times C_{q+1}$ of order smaller than or equal to $p_i^{\min\{s_i,t_i,u_i\}}$, and hence $m_i>n_i>\min\{s_i,t_i,u_i\}$.
Let $\delta\in K$ have order $o(\delta)=p_i^{m_i}$. Then the elements $\delta^{p_i^{m_i-n_i-1}}\in K$ and $\alpha^{p_i^{f-1}}\notin K$ satisfy $o(\delta^{p_i^{m_i-n_i-1}})=o(\alpha^{p_i^{f-1}})=p_i^{n_i+1}$, $\delta^{p_i^{m_i-n_i-1}}\notin\langle\alpha^{p_i^{f-1}}\rangle$, and $\alpha^{p_i^{f-1}}\notin\langle\delta^{p_i^{m_i-n_i-1}}\rangle$. This implies that $\langle\delta^{p_i^{m_i-n_i-1}},\alpha^{p_i^{f-1}}\rangle\cong C_{p_i^{n_i+1}}\times C_{p_i^{n_i+1}}$ by direct counting on the number of elements of order $p_i^{n_i+1}$. Thus, $G$ contains all $p_i$-elements of order at most $p_i^{n_i+1}$, a contradiction to $n_i>\min\{s_i,t_i,u_i\}$. Therefore, $o(\beta)=p_i^{m_i}$.

We show that $P_i$ is cyclic. Suppose by contradiction that this is not the case, so that $P_i$ has a subgroup $\alpha_1 K\times\alpha_2 K\cong C_{p_i}\times C_{p_i}$. As above, we can assume that $o(\alpha_1)=o(\alpha_2)=p_i^{m_i+1}$. Also, $\alpha_1\notin\langle\alpha_2\rangle$ and $\alpha_2\notin\langle\alpha_1\rangle$. Hence, $G$ has a subgroup $\langle\alpha_1,\alpha_2\rangle\cong C_{p_i^{m_i+1}}\times C_{p_i^{m_i+1}}$, a contradiction to $m_i+1>\min\{s_i,t_i,u_i\}$. Therefore, $P_i$ is cyclic.
Let $\alpha$ be a generator of $P_i$. As shown above, $\alpha^{p_i^{v_i}}$ has order $p_i^{m_i}$. Since the Sylow $p_i$-subgroup of $C_{q+1}\times C_{q+1}$ has exponent $p_i^{r_i}$, this proves that $0\leq v_i\leq r_i-m_i$.
\item Suppose $p_i=2\mid abc$ and $2\nmid\gcd(a,b,c)$. Then $2$ divides exactly one between $a$, $b$, and $c$. Arguing as in the previous point, one can show that the Sylow $2$-subgroup of $G$ is cyclic of order $2^{v_i+\max\{s_i,t_i,u_i\}}$ with $0\leq v_i\leq r_i-\max\{s_i,t_i,u_i\}$.
\item Suppose $p_i=2\mid abc$ and $2\mid\gcd(a,b,c)$.
Arguing as above, one can show that the Sylow $2$-subgroup $P_i$ of $G/K$ is cyclic and generated by $\alpha K$ where $\alpha\in G$ is a $2$-element; if $\alpha K$ has order $2^f$, then $\alpha^{2^{f}}$ has maximal order in the Sylow $2$-subgroup of $K$.
Let $\gamma=\alpha^{2^{f-1}}=\diag(\lambda,\mu,1)$.

Suppose $s_i=t_i=u_i$. If $o(\mu)\leq 2^{s_i}$, then $\sigma=\diag(1,\mu^{-1},1)\in K$ and $\gamma\sigma$ is a homology of order $2^{s_i+1}$, a contradiction; analogously if $o(\lambda)\leq 2^{s_i}$. Then $o(\lambda)=o(\mu)=2^{s_i+1}$, so that $\mu=\lambda^i$ with $i$ odd. Hence, $o(\lambda^{1-i})\leq 2^{s_i}$ and $\sigma=\diag(1,\lambda^{1-i},1)\in K$; thus, $\gamma\sigma$ is a homology of order $2^{s_i+1}$, a contradiction.

Suppose that $s_i,t_i,u_i$ are not equal, so that two of them are equal and smaller than the third one, say $s_i=t_i<u_i$. As shown above, this implies that $\gamma^2$ is a homology and $\gamma^2\in C$; otherwise, new homologies arise.
Then $\lambda^2=\mu^2$ and hence $\mu=-\lambda$, as $\gamma$ is of type (B1). Since $\sigma=\diag(1,-1,1)\in K$, $\gamma\sigma$ is a homology of order $2^{u_i+1}$ in $G$, a contradiction.
The argument is analogous if $\max\{s_i,t_i,u_i\}=s_i$ or $\max\{s_i,t_i,u_i\}=t_i$.
\end{itemize}
\end{proof}

%\begin{remark}
%If $q+1$ is replaced by $m$ in the statement of Theorem {\rm \ref{fissatorepuntuale}}, then the claim holds true for the Fermat curve $\cF_m$ of any degree $m$ coprime to $p$, and provides all genera
%$$g(\cF_m/G)=\frac{m(m-3-d)+2e}{2e}$$
%when $G\leq C_m\times C_m$ stabilizes $T$ pointwise.
%\end{remark}

\begin{remark}
From the proof of Theorem {\rm \ref{fissatorepuntuale}} it follows that the group-theoretic structure of $G$ is uniquely determined by the parameters $e=|G|$, $a=|\{\diag(\lambda,1,1)\in G\;\textrm{for some}\;\lambda\}|$, $b=|\{\diag(1,\lambda,1)\in G\;\textrm{for some}\;\lambda\}|$, and $c=|\{\diag(\lambda,\lambda,1)\in G\;\textrm{for some}\;\lambda\}|$.

In fact, write $G$ as a direct product $H_1\times\cdots\times H_{\ell}$ of its Sylow subgroups.
If $p_i$ divides at most one between $a$, $b$, and $c$, then $H_i$ is cyclic, of order $p_i^{v_i+\max\{s_i,t_i,u_i\}}$.
Otherwise, $p_i$ divides $a$, $b$, and $c$; in this case, $H_i$ is the direct product $C_1\times C_2$ of two cyclic groups of order $p_i^{v_i+\max\{s_i,t_i,u_i\}}$ and $p_i^{\min\{s_i,t_i,u_i\}}$.
\end{remark}

\subsection{The pointwise stabilizer of $T$ has index $2$ in $G$}\label{sec:indice2}

In this section $[G:G_T]=2$.
%Let $\beta\in G\setminus G_S$, so that $\beta$ acts as a transposition on $T$. Without restriction, we assume that $\beta$ stabilizes $P_3$; hence,
%$$  \beta=\begin{pmatrix} 0 & \gamma_1 & 0 \\ \gamma_2 & 0 & 0 \\ 0 & 0 & 1 \end{pmatrix}, $$
%with $\gamma_1^{q+1}=\gamma_2^{q+1}=1$ and $o(\gamma_1\gamma_2)\mid(q+1)$.
We consider separately the cases $q$ even and $q$ odd.

\begin{proposition}\label{indice2pari}
Let $q$ be even.
\begin{itemize}
\item[(i)] Let $a$, $c$, and $e$ be positive integers satisfying $e\mid(q+1)^2$, $c\mid(q+1)$, $a\mid c$, $ac\mid e$, $\frac{e}{a}\mid(q+1)$, and $\gcd\left(\frac{e}{ac},\frac{c}{a}\right)=1$.
Then there exists a subgroup $G\leq (C_{q+1}\times C_{q+1})\rtimes \mathbf{S}_3$ of order $2e$ such that $|G_T|=e$ and 
\begin{equation}\label{genereindice2}
g(\cH_q/G)=\frac{(q+1)\left(q-2a-c-\frac{e}{c}+1\right)+3e}{4e}.
\end{equation}
\item[(ii)] Conversely, if $G\leq(C_{q+1}\times C_{q+1})\rtimes \mathbf{S}_3$ and $G_T$ has index $2$ in $G$, then the genus of $\cH_q/G$ is given by Equation \eqref{genereindice2}, where: $e=|G|/2$; without loss of generality, $a-1$ is the number of homologies in $G$ with center $P_1$, which is equal to the number of homologies in $G$ with center $P_2$, and $c-1$ is the number of homologies in $G$ with center $P_3$; $a,c,e$ satisfy the numerical assumptions in point {\it (i)}.
\end{itemize}
\end{proposition}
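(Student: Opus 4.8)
The plan is to compute the genus through the Riemann--Hurwitz formula, so the whole problem reduces to determining the different $\Delta=\sum_{\sigma\in G\setminus\{id\}}i(\sigma)$ by sorting the nontrivial elements of $G$ according to Theorem \ref{caratteri}. Set $n=q+1$ (odd, since $q$ is even) and identify $C_{q+1}\times C_{q+1}$ with $\mu_n\times\mu_n$ via $\diag(x,y,1)\leftrightarrow(x,y)$; write $e=|G_T|$, so $|G|=2e$. Since $[G:G_T]=2$, the image of $G$ in $\mathbf{S}_3=\mathrm{Sym}(T)$ has order $2$ and is generated by a transposition; after relabelling I may assume every element of $G\setminus G_T$ induces $(P_1\,P_2)$ and fixes $P_3$. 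Conjugation by such an element swaps the two coordinates, hence sends homologies with centre $P_1$ to homologies with centre $P_2$; this forces the two counts to be equal, which is exactly the normalization $b=a$ in the statement, and it also shows $G_T$ is invariant under the coordinate swap.

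Next I would split $\Delta$ into the contribution of $G_T$ and of the nontrivial coset. By Lemma \ref{classificazione} a nontrivial diagonal element is either a homology of type {\rm (A)} or of type {\rm (B1)}; the homologies are exactly the nonidentity elements of $A=\{(x,1)\}$, $B=\{(1,y)\}$, $C=\{(z,z)\}$, with $|A|=|B|=a$ and $|C|=c$, so there are $2a+c-3$ of them, each contributing $i=q+1$ (item 3 of Theorem \ref{caratteri}; $2\nmid n$ rules out item 1), while the type {\rm (B1)} elements contribute $0$. For the coset, writing a representative as $\tau=\diag(\lambda,\mu,1)\tau_0$ with $\tau_0$ the coordinate swap, a direct computation gives $\tau^2=\diag(\lambda\mu,\lambda\mu,1)$. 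Hence $\tau$ is an involution iff $\lambda\mu=1$, in which case (as $p=2$) it is an elation of type {\rm (C)} with $i=q+2$ by item 9; otherwise $\ord(\tau)=2\,\ord(\lambda\mu)$ with $1\neq\ord(\lambda\mu)\mid n$, so $\tau$ is of type {\rm (E)} with $i=1$ by item 10 (type {\rm (D)}, needing order $4$, is excluded because $n$ is odd).

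The crux is therefore to count the involutions in $G\setminus G_T$. Squaring shows that $g\tau$, $g=(x,y)$, is an involution precisely when $xy$ takes a fixed value, so their number is the size of a fibre of the homomorphism $\psi\colon G_T\to\mu_n$, $(x,y)\mapsto xy$. Now $(x,y)\in G_T$ forces $(y,x)\in G_T$, hence $(xy,xy)=(x,y)(y,x)\in C$, so $\psi(G_T)\subseteq\{z:(z,z)\in C\}$; this is an equality because $\psi(C)=\{z^2:(z,z)\in C\}$ already fills that order-$c$ subgroup ($z\mapsto z^2$ being bijective on $\mu_n$). Thus $|\psi(G_T)|=c$, $|\ker\psi|=e/c$, and the relevant fibre is nonempty since $\tau^2\in C$, so there are exactly $e/c$ involutions and $e-e/c$ elements of type {\rm (E)} in the coset. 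Collecting terms, $\Delta=(2a+c-3)(q+1)+\tfrac{e}{c}(q+2)+\bigl(e-\tfrac{e}{c}\bigr)=(q+1)\bigl(2a+c-3+\tfrac{e}{c}\bigr)+e$, and substituting into $\Delta=(2g(\cH_q)-2)-|G|(2g(\cH_q/G)-2)$ with $2g(\cH_q)-2=(q-2)(q+1)$ and $|G|=2e$ yields Equation \eqref{genereindice2}.

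It remains to match the numerical conditions and to construct the groups. Because $n$ is odd, the swap acts on the abelian $n$-group $G_T$ with $2$ invertible, so $G_T$ splits into $\pm1$ eigenspaces $G_T=S^+\oplus S^-$, where $S^+=C=\{(z,z):z\in C_1\}$ has order $c$ and $S^-=\{(z,z^{-1}):z\in R_1\}=\ker\psi$ has order $e/c$; here $C_1,R_1\le\mu_n$ denote the parameter groups. Writing $(x,1)=(\sqrt{x},\sqrt{x})(\sqrt{x},\sqrt{x}^{-1})$ shows $(x,1)\in G_T$ iff $\sqrt{x}\in C_1\cap R_1$, whence $a=|C_1\cap R_1|=\min(c,e/c)$ prime by prime; rewritten multiplicatively this is exactly $a\mid c$, $ac\mid e$ together with $\gcd(e/(ac),c/a)=1$, while $c\mid(q+1)$, $e\mid(q+1)^2$ and $e/a\mid(q+1)$ come from $|C|$, $|G_T|$ and $|G_T/A|\hookrightarrow\mu_n$. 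This proves \emph{(ii)}. For \emph{(i)}, given $a,c,e$ as in the statement one lets $C_1,R_1\le\mu_n$ be the subgroups of orders $c$ and $e/c$ (which exist since $c\mid n$ and $e/c\mid n$), sets $G_T=\{(z,z):z\in C_1\}\oplus\{(z,z^{-1}):z\in R_1\}$ and $G=\langle G_T,\tau_0\rangle$; the eigenspace description gives $|G_T|=e$, $|A|=|B|=\min(c,e/c)=a$, $|C|=c$ and no further homologies, so the computation above applies. I expect the main obstacle to be precisely this structural bookkeeping---checking that $S^+\oplus S^-$ yields exactly the prescribed number of homologies of each centre and no spurious ones---rather than the Riemann--Hurwitz step, which is routine once the involution count $e/c$ is secured.
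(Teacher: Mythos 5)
Your argument is correct, and its Riemann--Hurwitz core is the same as the paper's: the $2a+c-3$ homologies of $G_T$ each contribute $q+1$, the coset contains exactly $\frac{e}{c}$ involutions of type (C) contributing $q+2$ and $e-\frac{e}{c}$ elements of type (E) contributing $1$, and the rest contribute $0$; this reproduces the paper's Equation \eqref{boh}. Where you genuinely diverge is in how the structure of $G_T$ and the numerical conditions are extracted. The paper proceeds prime by prime: it forms $K=A\times C$, shows (by excluding extra homologies, as in Theorem \ref{fissatorepuntuale}) that each Sylow subgroup of $G_T/K$ is cyclic and generated by the class of an antidiagonal element $\diag(\delta,\delta^{-1},1)$, and from this reads off $\frac{e}{a}\mid(q+1)$ and $\gcd\bigl(\frac{e}{ac},\frac{c}{a}\bigr)=1$; its construction in (i) is correspondingly $G_T=ACD$ with $|D|=\frac{e}{a}$. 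You instead exploit that the coordinate swap is an involution acting on the abelian group $G_T$ of odd order $e$, so $G_T=S^+\oplus S^-$ with $S^+=C$ of order $c$ and $S^-=\ker\psi$ of order $\frac{e}{c}$, whence $a=|C_1\cap R_1|=\gcd\left(c,\frac{e}{c}\right)$ and the full list of hypotheses in (i) is recognized as equivalent to $c\mid(q+1)$, $\frac{e}{c}\mid(q+1)$, $a=\gcd\left(c,\frac{e}{c}\right)$. This buys a uniform, Sylow-free derivation of the constraints, a one-line construction for part (i), and a clean count of the coset involutions as a fibre of the homomorphism $\psi(x,y)=xy$, whose image you correctly identify with the diagonal subgroup using that squaring is bijective in odd order (this also shows the relevant fibre is nonempty, a point the paper handles only implicitly). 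The price is that your $\pm$-eigenspace decomposition is special to $q$ even: for $q$ odd, $2$ is not invertible in $C_{q+1}\times C_{q+1}$, and the paper's heavier prime-by-prime analysis is the one that carries over to Proposition \ref{indice2dispari}.
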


\begin{proof}
{\it (i)}: let $a,c,e$ be positive integers satisfying the assumptions in point {\it (i)}.
%We construct $G$ of order $2e$ acting on $T$ such that $G=S\rtimes\langle\beta\rangle$, where $S=G_T$, $\beta$ is an involution which fixes $P_3$ and interchanges $P_1$ and $P_2$.
Choose $\lambda,\mu,\rho\in\mathbb{F}_{q^2}$ such that $o(\lambda)=a$, $o(\mu)=c$, and $o(\rho)=e/a$.
Define $A=\langle\diag(\lambda,1,1)\rangle$, $C=\langle\diag(\mu,\mu,1)\rangle$, $D=\langle\diag(\rho,\rho^{-1},1)\rangle$, $S=ACD$, and
$$  \beta=\begin{pmatrix} 0 & \gamma & 0 \\ \gamma^{-1} & 0 & 0 \\ 0 & 0 & 1 \end{pmatrix}, $$
for some $\gamma\in\mathbb{F}_{q^2}$ with $\gamma^{q+1}=1$. Let $G=\langle S,\beta\rangle$. Note that $S\cong A\times CD$, $G\leq{\rm PGU}(3,q)$, $G$ stabilizes $T$, and $G_T=S$.
By direct checking, the assumptions imply that $\beta$ normalizes $S$, so that $G\cong S\rtimes \langle\beta\rangle$. The order of $G$ is $2e$.
We show that $g(\cH_q/G)$ is given by Equation \eqref{genereindice2}.

The homologies of $S$ are contained in $K=A\times C$ and their number is $(a-1)+(a-1)+(c-1)$ (note that $S$ has the role of $G$ in Theorem \ref{fissatorepuntuale}, where $b=a$). By Theorem \ref{caratteri}, we have $2a+c-3$ elements $\sigma\in S$ such that $i(\sigma)=q+1$; for any other $\sigma\in S$, $i(\sigma)=0$.
Let $\sigma\in S$, $\sigma=\diag(\delta,\epsilon,1)$.
If $\epsilon=\delta^{-1}$, then $o(\sigma\beta)=2$; by Theorem \ref{caratteri}, $i(\sigma\beta)=q+2$.
If $\epsilon\ne\delta^{-1}$, then $o(\sigma\beta)=2d$ where $d$ is a nontrivial divisor of $q+1$; by Theorem \ref{caratteri}, $i(\sigma\beta)=1$.

The number of elements $\sigma=\diag(\lambda\mu,\mu,1)\in K$ such that $\mu=(\lambda\mu)^{-1}$ is equal to $a$; it follows that the number of elements $\sigma=\diag(\delta,\epsilon,1)\in S$ such that $\epsilon=\delta^{-1}$ is equal to $a\cdot [S:K]=\frac{e}{c}$, because any element of $S/K$ is of type $\sigma K$ with $\sigma\in D$.
Thus, by the Riemann-Hurwitz formula, $q^2-q-2=2e(2g(\cH_q/G)-2)+\Delta$, where
\begin{equation}\label{boh}
\Delta= (2a+c-3)(q+1) + \frac{e}{c}(q+2) + \left(e-\frac{e}{c}\right)\cdot1.
\end{equation}
Equation \eqref{genereindice2} now follows by direct computation.

{\rm (ii)}: let $G\leq{\rm PGU}(3,q)$ stabilize $T$ with $[G:G_T]=2$. Let $e$ be the order of $G_T$.
Since $G_T$ has odd order, $G_T$ has a complement in $G$, say $G=G_T\rtimes C_2$ with $C_2=\langle\beta\rangle$. Without restriction, we assume that $\beta$ stabilizes $P_3$ and interchanges $P_1$ and $P_2$, so that
$$  \beta=\begin{pmatrix} 0 & \gamma & 0 \\ \gamma^{-1} & 0 & 0 \\ 0 & 0 & 1 \end{pmatrix}, $$
with $\gamma^{q+1}=1$. For any $\sigma=\diag(\delta,\epsilon,1)\in G_T$, we have $\beta^{-1}\sigma\beta=\diag(\epsilon,\delta,1)\in G_T$. Hence, the subgroup $A\leq G_T$ of elements of type $\diag(\lambda,1,1)$ has the same order $a\mid(q+1)$ of the subgroup $B=\beta^{-1}A\beta\leq G_T$ made by the elements of type $\diag(1,\lambda,1)$. The order $c\mid(q+1)$ of the subgroup $C\leq G_T$ of elements of type $\diag(\lambda,\lambda,1)$ is a multiple of $a$; note that $C$ is the center of $G$. The number of homologies in $G_T$ is $d=2a+c-3$, and the homologies generate $K=ABC= A\times C$.
%Obviously, $ac=|K|$ divides $e=|G_T|$.

Let $\bar p$ be a prime divisor of $|G_T/K|=\bar{p}^v f$ with $\bar{p}\nmid f$.
As in the proof of Theorem \ref{fissatorepuntuale}, the Sylow $\bar p$-subgroup $\bar P$ of $G_T/K$ is cyclic; $\bar P=\langle\alpha K\rangle$, where $\alpha=\diag(\delta,\epsilon,1)\in G_T$ is a $\bar p$-element and $o(\alpha^{\bar p^{v}})=\bar p^{m}$ is maximal in the Sylow $\bar p$-subgroup of $K$. We can also assume $o(\delta)=o(\alpha)=\bar p^{v+m}$ and $\epsilon=\delta^j$ for some $j\in\{2,\ldots,\bar p^{v+m}-1\}$; otherwise, $o(\epsilon)=o(\alpha)$, $\delta=\epsilon^j$, and analogous arguments hold.
The element $\beta^{-1}\alpha\beta=\diag(\delta^j,\delta,1)$ must be contained in $\langle\alpha\rangle$; otherwise, $\langle\alpha,\beta^{-1}\alpha\beta\rangle$ is a subgroup $C_{\bar p^{v+m}}\times C_{\bar p^{v+m}}$ of $G$ containing homologies out of $K$, a contradiction.
Thus, $\beta^{-1}\alpha\beta=\alpha^k$ for some $k\in\{2,\ldots,\bar p^{v+m}-1\}$, that is, $\delta^{k}=\delta^{j}$ and $\delta^{jk}=\delta$. This implies $k=j=\bar p^{v+m}-1$ and $\alpha=\diag(\delta,\delta^{-1},1)$.

Hence, $\langle\alpha\rangle$ has order a divisor of $q+1$; the same argument for all prime divisors $\bar p$ of $|G_T/K|$ proves that $\frac{e}{a}\mid(q+1)$.
%Actually, this also proves that $G_S=A\times \Gamma$, where $\Gamma$ is a cyclic group containing $C$.
Also, no nontrivial power $\alpha^r=\diag(\delta^{r},\delta^{-r},1)\ne id$ of $\alpha$ can be contained in $C$. Hence, $\gcd(o(\alpha),a)=\gcd(o(\alpha),c)$; otherwise, the product $\langle\alpha\rangle \cdot C$ would contain new homologies in $A$ whose order does not divide $a$, a contradiction. Therefore, $\gcd(\frac{e}{ac},\frac{c}{a})=1$.

Now the value of $\Delta$ is computed as in Equation \eqref{boh} and provides Equation \eqref{genereindice2}, that is the thesis.
\end{proof}

\begin{proposition}\label{indice2dispari}
Let $q$ be odd.
\begin{itemize}
%\item[(i)] Let $a$, $c$, and $e$ be positive integers satisfying $e\mid(q+1)^2$, $c\mid(q+1)$, $a\mid c$, $ac\mid e$, and $\frac{e}{a}\mid(q+1)$.
%We also require that, if $2\mid a$ or $2\nmid c$, then $2\nmid \frac{e}{ac}$.
%Then there exists a subgroup $G\leq (C_{q+1}\times C_{q+1})\rtimes \mathbf{S}_3$ of order $2e$ such that $|G\cap(C_{q+1}\times C_{q+1})|=e$ and 
%\begin{equation}\label{genereindice2dispari}
%g(\cH_q/G)=\frac{(q+1)\left(q-2a-c-\frac{e}{c}+1\right)+2e+\frac{2e}{c}}{4e}.
%\end{equation}
\item[(i)] Let $\ell$, $a$, $c$, and $e$ be positive integers satisfying $e\mid(q+1)^2$, $c\mid(q+1)$, $\ell\mid c$, $a\mid c$, $ac\mid e$, $\frac{e}{a}\mid(q+1)$, and $\gcd(\frac{e}{ac},\frac{c}{a})=1$.
If $2\mid a$ or $2\nmid c$, we also require that $2\nmid \frac{e}{ac}$.
Then there exists a subgroup $G\leq (C_{q+1}\times C_{q+1})\rtimes \mathbf{S}_3$ of order $2e$ such that $|G\cap(C_{q+1}\times C_{q+1})|=e$ and 
\begin{equation}\label{genereindice2dispari}
g(\cH_q/G)=\frac{(q+1)\left(q-2a-c+1-h\right)-2k + 4e}{4e},
\end{equation}
where
%$$ (h,k)=\begin{cases}
%(0,e) & \qquad \textrm{if} \qquad \gamma\nmid \frac{q+1}{2},\; a\mid \frac{q+1}{2} \,;\\
%\left(\frac{e}{c},\frac{e}{2}\right) & \qquad \textrm{if} \qquad \gamma\nmid \frac{q+1}{2},\; a\nmid\frac{q+1}{2}\;; \\
%(0,0) & \qquad \textrm{if} \qquad \gamma\mid \frac{q+1}{2},\;\gamma\nmid\frac{c}{2},\;a\mid\frac{c}{2}\;; \\
%\left(\frac{e}{c},0\right) & \qquad \textrm{if} \qquad \gamma\mid \frac{q+1}{2},\; \gamma\nmid\frac{c}{2},\; a\nmid\frac{c}{2}\;; \\
%\left(\frac{2e}{c},0\right) & \qquad \textrm{if} \qquad \gamma\mid \frac{c}{2},\; a\mid \frac{c}{2}\;; \\
%\left(\frac{e}{c},0\right) & \qquad \textrm{if} \qquad \gamma\mid \frac{c}{2},\;a\nmid\frac{c}{2},\;a\mid\frac{q+1}{2}\;; \\
%\left(\frac{e}{c},\frac{e}{2}\right) & \qquad \textrm{if} \qquad \gamma\mid \frac{c}{2},\; a\nmid\frac{q+1}{2}\;. \\
%\end{cases} $$
$$ (h,k)=\begin{cases}
\left(\frac{e}{c},\frac{e}{2}\right) & \qquad \textrm{if} \qquad 2a\nmid (q+1) \,;\\
\left(\frac{e}{c},0\right) & \qquad \textrm{if} \qquad 2a \mid (q+1),\; 2a\nmid c\;; \\
\left(0,e\right) & \qquad \textrm{if} \qquad 2a\mid c,\;2\ell\nmid(q+1)\;; \\
\left(0,0\right) & \qquad \textrm{if} \qquad 2a\mid c,\; 2\ell\mid (q+1),\; 2\ell\nmid c\;; \\
\left(\frac{2e}{c},0\right) & \qquad \textrm{if} \qquad 2a\mid c,\; 2\ell\mid c\;. \\
\end{cases} $$
\item[(ii)] Conversely, if $G\leq(C_{q+1}\times C_{q+1})\rtimes \mathbf{S}_3$ and $G\cap(C_{q+1}\times C_{q+1})$ has index $2$ in $G$, then the genus of $\cH_q/G$ is given by Equation \eqref{genereindice2dispari}, where: $e=|G|/2$; without loss of generality, $a-1$ is the number of homologies in $G$ with center $P_1$ which is equal to the number of homologies in $G$ with center $P_2$, and $c-1$ is the number of homologies in $G$ with center $P_3$; $\ell=\frac{o(\beta)}{2}$ for some $\beta\in G\setminus G_T$; $\ell,a,c,e$ satisfy the numerical assumptions in point {\it (i)}.
\end{itemize}
\end{proposition}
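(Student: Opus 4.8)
The plan is to follow the two-step template of Proposition~\ref{indice2pari}; the only genuinely new feature for odd $q$ is that $G_T$ may have even order, so that an element $\beta\in G\setminus G_T$ interchanging two vertices need not be an involution. I would normalise things so that $\beta$ fixes $P_3$ and swaps $P_1,P_2$; then $\beta^2=\diag(\gamma\gamma',\gamma\gamma',1)$ lies in the central factor $C$, and $\ell=o(\beta)/2=o(\beta^2)$ divides $c$. Everything is driven by Riemann--Hurwitz applied to $\cH_q\to\cH_q/G$, so the task is to evaluate $\Delta=\sum_{\sigma\neq{\rm id}}i(\sigma)$. The contribution of $G_T$ is exactly the one computed in Theorem~\ref{fissatorepuntuale} with $b=a$: the $2a+c-3$ homologies each give $i(\sigma)=q+1$ and every other element gives $0$. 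Hence the whole problem reduces to the contribution of the nontrivial coset $G_T\beta$, which I will show equals $h(q+1)+2k$, where $h$ is the number of involutions in $G_T\beta$ (homologies of type~(A), with $i=q+1$ by Theorem~\ref{caratteri}(1)) and $k$ is the number of elements of type~(B2) (with $i=2$ by Theorem~\ref{caratteri}(5)); the remaining coset elements will turn out to be of type~(B1) with $i=0$. Substituting $\Delta=(2a+c-3)(q+1)+h(q+1)+2k$ and $2g(\cH_q)-2=(q+1)(q-2)$ into Riemann--Hurwitz then yields \eqref{genereindice2dispari} directly, so $h$ and $k$ are \emph{intrinsic} invariants of $G$ and the entire content is their computation.

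To classify a coset element $\tau=\sigma\beta$ with $\sigma=\diag(\delta,\epsilon,1)$, I would note that $\tau^2=\diag(w,w,1)$ with $w=\delta\epsilon\,\gamma\gamma'$, so $\tau$ has eigenvalues $\pm\sqrt{w}$ and $1$; its fixed points are $P_3$ together with the two points $(\delta\gamma,\pm\sqrt{w},0)$ on the chord $P_1P_2$. Since $\delta,\epsilon,\gamma,\gamma'$ are $(q+1)$-th roots of unity, $w$ lies in the cyclic group $\mu_{q+1}\subset\fqs$ of $(q+1)$-th roots of unity, and on the model~\eqref{M1} one checks that those two points lie on $\cH_q$ precisely when $w^{(q+1)/2}=-1$, i.e.\ when $w$ is a nonsquare in $\mu_{q+1}$. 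Thus $\tau$ is an involution of type~(A) iff $w=1$, of type~(B2) iff $w$ is a nonsquare, and of type~(B1) (with $i(\tau)=0$) otherwise. Counting $h$ and $k$ then becomes a counting problem for $w$ as $\sigma$ runs over $G_T$: I would introduce the product homomorphism $\pi\colon G_T\to\mu_{q+1}$, $\diag(\delta,\epsilon,1)\mapsto\delta\epsilon$, whose image $P$ is generated by $\pi(A)$ and $\pi(C)$ and has order ${\rm lcm}(a,c/\gcd(2,c))$, and whose fibres have constant size $e/|P|$. Then $w$ runs $e/|P|$ times over the coset $(\gamma\gamma')P$, so that $h=(e/|P|)\cdot[\,\gamma\gamma'\in P\,]$ and $k=(e/|P|)\cdot\#\{x\in(\gamma\gamma')P:\ x\ \text{is a nonsquare in}\ \mu_{q+1}\}$.

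The main obstacle, and the source of the five rows of the table, is the purely $2$-adic bookkeeping hidden in these two quantities. Using $a\mid c\mid(q+1)$ I would first show that $|P|=c$ when $2a\nmid c$ and $|P|=c/2$ when $2a\mid c$, which sets the scale $e/c$ versus $2e/c$. For $h$: when $2a\nmid c$ one has $P=\langle\mu\rangle\ni\gamma\gamma'$, giving $h=e/c$ (rows $1$--$2$); when $2a\mid c$ one has $P=\langle\mu^{2}\rangle$, and $\gamma\gamma'=\mu^{s}$ of order $\ell$ lies in $P$ iff $2\ell\mid c$, giving $h=2e/c$ if $2\ell\mid c$ (row $5$) and $h=0$ otherwise (rows $3$--$4$). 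For $k$: since $(\gamma\gamma')P$ is a coset of $\langle\mu\rangle$ or of $\langle\mu^{2}\rangle$, its elements are either all squares or all nonsquares in $\mu_{q+1}$, the alternative being governed by whether $2c\mid(q+1)$; I would then translate this condition using $v_2(a)=v_2(c)$ (valid when $2a\nmid c$) and $v_2(\ell)=v_2(c)$ (valid when $2a\mid c,\,2\ell\nmid c$), which converts ``$2c\mid(q+1)$'' into ``$2a\mid(q+1)$'' in rows $1$--$2$ and into ``$2\ell\mid(q+1)$'' in rows $3$--$4$. This produces exactly $(h,k)=(e/c,e/2),(e/c,0),(0,e),(0,0),(2e/c,0)$. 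I expect this valuation bookkeeping to be the delicate part; the remaining verifications should be routine.

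It then remains to set up the two directions. For part~\emph{(i)}, given $\ell,a,c,e$ satisfying the divisibilities and the parity condition, I would take $A=\langle\diag(\lambda,1,1)\rangle$, $C=\langle\diag(\mu,\mu,1)\rangle$, $D=\langle\diag(\rho,\rho^{-1},1)\rangle$ with $o(\lambda)=a$, $o(\mu)=c$, $o(\rho)=e/a$, set $S=ACD$, and choose $\beta$ monomial with $\beta^2=\diag(\mu^{s},\mu^{s},1)$ of order $\ell$; the hypotheses $ac\mid e$, $\tfrac ea\mid(q+1)$, $\gcd(\tfrac{e}{ac},\tfrac ca)=1$ together with the parity condition ``$2\nmid\tfrac{e}{ac}$ when $2\mid a$ or $2\nmid c$'' are precisely what force $\beta$ to normalise $S$, keep $|S|=e$, and prevent any homology (in particular the would-be homology $\diag(-1,-1,1)$) from arising outside $K=A\times C$, so that $G=\langle S,\beta\rangle$ has order $2e$ with $G_T=S$; the values of $h,k$ are then read off from the paragraph above and Riemann--Hurwitz gives \eqref{genereindice2dispari}. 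For the converse~\emph{(ii)}, starting from an abstract $G$ with $[G:G_T]=2$ I would pick any $\beta\in G\setminus G_T$, observe that it interchanges two vertices and set $\ell=o(\beta)/2$, so $\beta^2\in C$ and $\ell\mid c$; the identifications $|A|=|B|=a$, the centrality of $C$, and the divisibilities $e\mid(q+1)^2$, $ac\mid e$, $\tfrac ea\mid(q+1)$, $\gcd(\tfrac{e}{ac},\tfrac ca)=1$, and the parity condition follow verbatim from the Sylow and conjugation arguments of Proposition~\ref{indice2pari}\emph{(ii)} (conjugating a cyclic Sylow generator $\alpha$ by $\beta$ forces $\alpha=\diag(\delta,\delta^{-1},1)$, whence $\tfrac ea\mid(q+1)$). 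Since $h$ and $k$ were computed as intrinsic counts, the same case analysis reproduces the table, and Riemann--Hurwitz yields \eqref{genereindice2dispari}, completing the proof.
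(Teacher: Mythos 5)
Your proposal is correct and follows essentially the same route as the paper: the same explicit construction $G=\langle A, C, D,\beta\rangle$, the same classification of the coset elements $\sigma\beta$ by the value $w$ of $(\sigma\beta)^2=\diag(w,w,1)$ — type (A), (B1) or (B2) according as $w=1$, $w$ a nontrivial square, or $w$ a nonsquare in $\mu_{q+1}$, which is equivalent to the paper's criterion $o(w)\mid\frac{q+1}{2}$ — and the same Riemann--Hurwitz bookkeeping, your homomorphism $\pi$ being merely a repackaging of the paper's direct case analysis on the exponents $r,s$. One sentence of your sketch needs repair, though it does not affect your (correct) final table: the claim that the coset $(\gamma\gamma')P$ consists entirely of squares or entirely of nonsquares fails precisely in the first two rows, where $\gamma\gamma'\in P$ so that $(\gamma\gamma')P=P$ is a subgroup containing the square $1$; when $2a\nmid(q+1)$ this subgroup meets the two square classes equally, and it is this half-and-half count, not an ``all nonsquares'' count, that yields $k=e/2$ rather than $k=e$.
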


\begin{proof}
{\it (i)}: let $\ell,a,c,e$ be positive integers satisfying the assumptions in point {\it (i)}.
Choose $\lambda,\mu,\rho\in\mathbb{F}_{q^2}$ such that $o(\lambda)=a$, $o(\mu)=c$, and $o(\rho)=e/a$. Define $A=\langle\diag(\lambda,1,1)\rangle$, $C=\langle\diag(\mu,\mu,1)\rangle$, $D=\langle\diag(\rho,\rho^{-1},1)\rangle$, $S=ACD= (A\times C)D$, and
$$
\beta=\begin{pmatrix}
0 & t & 0 \\ 1 & 0 & 0 \\ 0 & 0 & 1
\end{pmatrix},
$$
where $t\in\mathbb{F}_{q^2}$ has order $\ell$. Let $G=\langle S,\beta\rangle$. We have that $G\leq \PGU(3,q)$, $G$ stabilizes $T$, and $G_T=S$. The numerical assumptions imply that, $\beta^2=\diag(t,t,1)\in C$, $\beta$ normalizes $G_T$, and $|G|=2e$.
The homologies in $G_T$ are in $K=A\times C$, and their number is $2a+c-3$; any other nontrivial element in $G_T$ is of type (B1).
Let $\sigma\in G\setminus G_T$, which is uniquely written as $\tau\beta$ with $\tau\in G_T$.
We have $\tau=\zeta\alpha\xi$, where $\zeta\in D$, $\alpha$ is a fixed element of $A$ and $\xi$ is a fixed element of $C$; write $\alpha=\diag(\lambda^r,1,1)$ and $\xi=\diag(\mu^s,\mu^s,1)$, where $r\in\{0,\ldots,a-1\}$ and $s\in\{0,\ldots,c-1\}$ are uniquely determined.
%, and fix $\zeta\in D$ such that $\tau\in\zeta K$; then $\tau=\zeta\alpha\xi$, where $\alpha=\diag(\lambda^r,1,1)\in A$ and $\xi=\diag(\mu^s,\mu^s,1)\in C$ are uniquely determined (that is, $r\in\{0,\ldots,a-1\}$ and $s\in\{0,\ldots,c-1\}$ are uniquely determined).
By direct checking, $\sigma^2=\diag(\lambda^r \mu^{2s}t,\lambda^r \mu^{2s}t,1)$. Hence, $\sigma$ is either of type (A), or (B1), or (B2), according to $\lambda^r \mu^{2s}t=1$, or $1\ne o(\lambda^r \mu^{2s}t)\mid\frac{q+1}{2}$, or $o(\lambda^r \mu^{2s}t)\nmid \frac{q+1}{2}$, respectively.
Note that the type of $\sigma$ does not depend on $\zeta$.
Thus, to find the number of elements $\sigma=\tau\beta\in G\setminus G_T$ of a certain type, we must count the number of elements $\tau\in K$ such that $\sigma$ is of that type, and then multiply this number by the index
 $\frac{e}{ac}$ of $K$ in $G_T$; to this aim, we distinguish different cases arising from the numerical conditions on $\ell,a,c$. Recall that $\beta$ is fixed as above.
%in order to count the elements $\sigma$ of a certain type in $G\setminus G_T$ it is sufficient to count  the number of elements of a certain type in $G\setminus G_T$ is equal to the number of such elements in $K$ multiplied by the index $\frac{e}{ac}$ of $K$ in $S$.
\begin{itemize}
\item $a\nmid\frac{q+1}{2}$: this implies $c\nmid\frac{q+1}{2}$.
The number of elements of type (B2) in $G\setminus G_T$ is equal to $\frac{a}{2}\cdot c\cdot\frac{e}{ac}=\frac{e}{2}$. In fact, if $\ell\mid\frac{q+1}{2}$, then $o(\lambda^r\mu^{2s}t)\nmid\frac{q+1}{2}$ if and only if $r$ is odd; if $\ell\nmid\frac{q+1}{2}$, then $o(\lambda^r\mu^{2s}t)\nmid\frac{q+1}{2}$ if and only if $r$ is even.
%In any case, the number of elements of type (B2) in $G\setminus G_T$ is $\frac{a}{2}\cdot c\cdot\frac{e}{ac}=\frac{e}{2}$.

Suppose $\ell\nmid\frac{q+1}{2}$ and $r$ is even, or $\ell\mid\frac{q+1}{2}$ and $r$ is odd. As already observed, $\sigma$ is of type (A) if and only if $\mu^{2s}=(\lambda^r t)^{-1}$. Thus, $G\setminus G_T$ has $\frac{a}{2}\cdot 2\cdot\frac{e}{ac}=\frac{e}{c}$ elements of type (A), and $\frac{a}{2}\cdot (c-2)\cdot\frac{e}{ac}$ elements of type (B1).
\item $a\mid\frac{q+1}{2}$ and $a\nmid\frac{c}{2}$: in this case 
%$a,\ell\mid c$ and $c\mid\frac{q+1}{2}$; hence,
$o(\lambda^r\mu^{2s}t)\mid\frac{q+1}{2}$ and $G\setminus G_T$ has no elements of type (B2).
%Both when $\ell\mid\frac{c}{2}$ and $\ell\nmid\frac{c}{2}$
For any $\ell$ 
there exist exactly $\frac{a}{2}$ values of $r$ such that $o((\lambda^r t)^{-1})\mid \frac{c}{2}$ (as above, we consider the cases $r$ even and $r$ odd separately)
%, namely the even $r$'s and the odd $r$'s, respectively.
and there are exactly $2$ values of $s$ for which $\lambda^r\mu^{2s}t=1$. In this way we get $\frac{a}{2}\cdot2\cdot\frac{e}{ac}=\frac{e}{c}$ elements of $G\setminus G_T$ are of type (A), the other ones are of type (B1).
\item $a\mid\frac{c}{2}$ and $\ell\nmid\frac{q+1}{2}$: in this case $o(\lambda^r\mu^{2s}t)\nmid\frac{q+1}{2}$ since $o(\lambda^r\mu^{2s})\mid\frac{q+1}{2}$ and $o(t)\nmid\frac{q+1}{2}$; hence, all elements of $G\setminus G_T$ are of type (B2).
\item $a\mid\frac{c}{2}$ and $\ell\mid\frac{q+1}{2}$ and $\ell\nmid\frac{c}{2}$: in this case $o(\lambda^r\mu^{2s}t)\mid\frac{q+1}{2}$ and $o(\lambda^r\mu^{2s})\ne o(t)$, so that $\lambda^r\mu^{2s}t\ne1$. Hence all elements of $G\setminus G_T$ are of type (B1).
\item $a\mid\frac{c}{2}$ and $\ell\mid\frac{c}{2}$: in this case $o(\lambda^r\mu^{2s}t)\mid\frac{q+1}{2}$ and $G\setminus G_T$ contains no elements of type (B2). Also, $o((\lambda^r t)^{-1})\mid o(\mu^2)$ and $G\setminus G_T$ contains exactly $a\cdot2\cdot\frac{e}{ac}=\frac{2e}{c}$ elements of type (A).
\end{itemize}
Denote by $h$ and $k$ the number of elements of $G\setminus G_T$ of type (A) and (B2), respectively.
By direct checking, the Riemann-Hurwitz formula and Theorem \ref{caratteri} provide the value given in Equation \eqref{genereindice2dispari} for the genus of $\cH_q/G$.

{\it (ii)}: let $G\leq\PGU(3,q)$ stabilize $T$ with $[G:G_T]=2$.
Put $e=|G_T|$. For any $\beta\in G\setminus G_T$, $\beta^2\in G_T$, and we can assume without loss of generality that $\beta$ stabilizes $P_3$ and interchanges $P_1$ and $P_2$, so that
$$ \beta=\begin{pmatrix} 0 & \gamma_1 & 0 \\ \gamma_2 & 0 & 0 \\ 0 & 0 & 1 \end{pmatrix}  $$
for some $\gamma_1,\gamma_2$ with $\gamma_1^{q+1}=\gamma_2^{q+1}=1$.
With same arguments as in the proof of Proposition \ref{indice2pari}, one can show that: the number of homologies with center $P_1$, say $a-1$, is equal to the number of homologies with center $P_2$; the number of homologies with center $P_3$, say $c-1$, satisfies $a\mid c$ and $c\mid(q+1)$;
the subgroup of $G_T$ generated by the homologies of $G_T$ is $K=A\times C$, where $A$ and $C$ are given by the identity together with the homologies having center $P_1$ and $P_3$, respectively;
%, where $A$ and $C$ are given by the identity together with the homologies having center $P_1$ and $P_3$, respectively.
from $K\leq G_T$ follows that
$ac\mid e$; $\frac{e}{a}\mid(q+1)$; and $\gcd(\frac{e}{ac},\frac{c}{a})=1$.
Since there are no homologies in $G_T\setminus K$, same arguments as in Theorem \ref{fissatorepuntuale} give $2\nmid \frac{e}{ac}$ when $2\mid a$ or $2\nmid c$.
The element $\beta^2=\diag(\gamma_1\gamma_2,\gamma_1\gamma_2,1)$ is either trivial or a homology with center $P_3$; hence, $\beta^2\in C$ and $\ell:=o(\gamma_1\gamma_2)=\frac{o(\beta)}{2}$ is a divisor of $c$.
Now the value of $\Delta$ in the Riemann-Hurwitz formula
$$2g(\cH_q)-2=|G|(2g(\cH_q/G)-2)+\Delta$$
can be computed as above and provides Equation \eqref{genereindice2dispari}, so that the claim follows.
\end{proof}

\subsection{The pointwise stabilizer of $T$ has index $3$ in $G$}\label{sec:indice3}

In this section $[G:G_T]=3$. We consider the cases $3\nmid(q+1)$ and $3\mid(q+1)$ separately.

\begin{proposition}\label{indice3caso1}
Let $q$ be such that $3\nmid(q+1)$.
\begin{itemize}
\item[(i)]
Let $a$ and $e$ be positive integers satisfying $e\mid(q+1)^2$, $a^2\mid e$, $\frac{e}{a}\mid(q+1)$, $2\nmid\frac{e}{a^2}$, and $\gcd(\frac{e}{a^2},a)=1$.
We also require that there exists a positive integer $m\leq \frac{e}{a^2}$ such that $\frac{e}{a^2}\mid(m^2-m+1)$.
Then there exists a subgroup $G\leq(C_{q+1}\times C_{q+1})\rtimes \mathbf{S}_3$ of order $3e$ such that $|G\cap(C_{q+1}\times C_{q+1})|=e$ and
\begin{equation}\label{genereindice3caso1}
g(\cH_q/G)=\frac{(q+1)(q-3a+1)+2e}{6e}.
\end{equation}
\item[(ii)] Conversely, if $G\leq(C_{q+1}\times C_{q+1})\rtimes \mathbf{S}_3$ and $G\cap(C_{q+1}\times C_{q+1})$ has index $3$ in $G$, then the genus of $\cH_q/G$ is given by Equation \eqref{genereindice3caso1}, where: $e=|G|/3$; the number of homologies in $G$ with center $P_i$ is $a-1$ for $i=1,2,3$; there exists $m$ such that $a,e,m$ satisfy the numerical assumptions in point {\it (i)}.
\end{itemize}
\end{proposition}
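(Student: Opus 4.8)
The plan is to follow the same template as Propositions \ref{indice2pari} and \ref{indice2dispari}: realize $G$ as an extension of its pointwise stabilizer $G_T\le C_{q+1}\times C_{q+1}$ by a cyclic group of order $3$, identify the subgroup generated by the homologies of $G$, and then read off every $i(\sigma)$ from Theorem \ref{caratteri}. Since $3\nmid(q+1)$ we have $\gcd(3,|G_T|)=1$, so by Schur--Zassenhaus $G_T$ has a complement $\langle\beta\rangle$ of order $3$ in $G$; as $\beta$ projects to an order-$3$ element of $\mathbf{S}_3$ it is a $3$-cycle on $\{P_1,P_2,P_3\}$ and may be taken to be a monomial matrix with $\beta^3=\mathrm{id}$. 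Conjugation by $\beta$ cyclically permutes the three families of homologies (those with centers $P_1,P_2,P_3$), so these families share a common cardinality $a-1$; writing $A,B,C$ for the corresponding cyclic groups of order $a$, the group $K=\langle A,B,C\rangle$ generated by all homologies equals $A\times B\cong\mathbb{Z}/a\times\mathbb{Z}/a$ of order $a^2$, since the diagonal group $C$ already lies in $A\times B$. This explains the single parameter $a$ and the divisibility $a^2\mid e$.

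The genus then follows at once from two facts. First, writing the torus additively as $(\mathbb{Z}/(q+1))^2$, $\beta$ induces the order-$3$ matrix $M=\left(\begin{smallmatrix}-1&1\\-1&0\end{smallmatrix}\right)$, which satisfies $M^2+M+I=0$; hence for every $\tau\in G_T$ the product $\tau\cdot(\beta\tau\beta^{-1})\cdot(\beta^2\tau\beta^{-2})$ corresponds to $(I+M+M^2)v=0$, so $(\tau\beta)^3=\mathrm{id}$ and \emph{every} element of $G\setminus G_T$ has order exactly $3$. Second, each such order-$3$ element $\sigma$ has $i(\sigma)=2$ irrespective of $p$: if $p\ne 3$ then $3\nmid(q+1)$ forces $q\equiv1\pmod 3$, so $3\mid q^2-1$ but $3\nmid q+1$ and $\sigma$ is of type (B2) with $i(\sigma)=2$ by Theorem \ref{caratteri}(5); if $p=3$ then $\sigma$ is unipotent, and a monomial $3$-cycle matrix representing a unipotent element has $\mathrm{rank}(\sigma-I)=2$, so $\sigma$ is regular unipotent of type (D) with $i(\sigma)=2$ by Theorem \ref{caratteri}(8). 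Consequently $G$ contains $3(a-1)$ homologies (type (A), $i=q+1$), the remaining elements of $G_T$ are of type (B1) with $i=0$, and the $2e$ elements of $G\setminus G_T$ each contribute $i=2$. Thus $\Delta=3(a-1)(q+1)+4e$, and since $2g(\cH_q)-2=(q+1)(q-2)$ the Riemann--Hurwitz formula yields exactly Equation \eqref{genereindice3caso1}.

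For part (i) I would construct $G_T=K\times D$ explicitly. Take $K=A\times B$ as above and let $D=\langle g\rangle$ be a cyclic group of order $n:=e/a^2$ on which $\beta$ acts by $x\mapsto x^{m}$; such a $\beta$-invariant $D$ exists precisely because $M$ admits an eigenvalue modulo $n$, which is the arithmetic content of the hypothesis that some $m\le n$ satisfies $n\mid m^2-m+1$ (equivalently $n\mid m^2+m+1$ after replacing $m$ by $-m$, the characteristic polynomial of $M$). The condition $\gcd(n,a)=1$ makes $K\cap D$ trivial, so $G_T=K\times D$ has order $e$; the condition $2\nmid n$ guarantees $D$ has no involution (every involution of the torus is a homology), while $m$ being a unit modulo $n$ forces the eigenline $D$ to meet each of the three homology lines only in the identity, so $G_T$ contains no homology outside $K$. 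Finally $A\times D$ is cyclic of order $an=e/a$, whence $e/a\mid(q+1)$, and $G=G_T\rtimes\langle\beta\rangle$ has order $3e$ and stabilizes $T$; its genus is given by the computation above.

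For the converse (ii) I would run, essentially verbatim, the Sylow-by-Sylow analysis of Theorem \ref{fissatorepuntuale}: for each prime $\bar p\mid n$ the Sylow $\bar p$-subgroup of $G_T/K$ must be cyclic, since a rank-$2$ subgroup would force homologies outside $K$; hence $G_T/K$ is cyclic of order $n$, lifting to $D$ with $G_T=K\times D$. The $\beta$-invariance of $G_T$ then forces $\beta^{-1}g\beta\in\langle g\rangle$, producing the multiplier $m$ and the relation $n\mid m^2-m+1$, while the coprimality, parity, and divisibility constraints emerge exactly as in the construction. \textbf{The main obstacle} I anticipate is the exact-order bookkeeping behind the equivalence ``$\beta$-invariant cyclic $D$ of order $n$ exists $\iff$ $n\mid m^2-m+1$'': producing an eigenvector of \emph{full} order $n$ (not merely an eigenvalue) needs a prime-by-prime argument that is delicate at $\bar p=3$, where $t^2+t+1$ has a repeated root modulo $3$ and $M$ is a single nontrivial Jordan block, so one must verify that the relevant eigenspace still contains an element of the correct $3$-power order. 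The second point to handle carefully is the uniform value $i(\sigma)=2$ on $G\setminus G_T$, which relies on the two geometrically distinct types (B2) and (D) happening to share the same different exponent.
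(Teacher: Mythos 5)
Your proposal is correct and follows essentially the same route as the paper: decompose $G=G_T\rtimes\langle\beta\rangle$, show the homologies generate $K\cong C_a\times C_a$, show $G_T/K$ is cyclic with the multiplier condition $\frac{e}{a^2}\mid(m^2-m+1)$ coming from $\beta$-invariance of $\langle\alpha\rangle$, classify the $2e$ outer elements as order-$3$ elements of type (B2) or (D) with $i(\sigma)=2$, and apply Riemann--Hurwitz to get $\Delta=3(a-1)(q+1)+4e$. Your two anticipated obstacles are in fact harmless: the paper realizes the $\beta$-invariant cyclic group directly as $\langle\diag(\rho,\rho^m,1)\rangle$ with $o(\rho)=e/a^2$ (an explicit full-order eigenvector, and the prime $3$ never divides $e/a^2$ here since $e/a^2\mid(q+1)$ and $3\nmid(q+1)$), and the coincidence $i(\sigma)=2$ for both types (B2) and (D) is exactly what the paper invokes from Theorem \ref{caratteri}.
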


\begin{proof}
{\it (i)}:
let $a$, $e$, and $m$ satisfy the assumptions in point {\it (i)}.
Define the following elements and subgroups of ${\rm \PGU}(3,q)$: $K=\{\diag(\lambda,\mu,1)\mid \lambda^a=\mu^a=1\}$, $\alpha=\diag(\rho,\rho^m,1)$ where $\rho\in\mathbb{F}_{q^2}$ is an element of order $\frac{e}{a^2}$, $S=\langle K,\alpha\rangle=K\times\langle\alpha\rangle$ as $\gcd\left(\frac{e}{a^2},a\right)=1$,
$$ \beta=\begin{pmatrix} 0 & 1 & 0 \\ 0 & 0 & 1 \\ 1 & 0 & 0 \end{pmatrix}, $$
and $G=\langle S,\beta \rangle\leq\PGU(3,q)$. Note that $S$ stabilizes $T$ pointwise, $\beta$ has order $3$, acts on $T$, and normalizes $K$.
Also, $\beta^{-1}\alpha\beta=\alpha^{-m}$, so that $\beta$ normalizes $S$.
% the assumption on $\ell$ implies that
%$$\beta^{-1}\alpha\beta=\diag(\rho^{-\ell},\rho^{1-\ell},1)=
%\diag(\rho^{-\ell},\rho^{-\ell^2},1)=
%\alpha^{-\ell}\in\langle\alpha\rangle,$$
%and hence $\beta$ normalizes $S$.
Thus, $G_T=S$ has order $e$, and $G=S\rtimes\langle\beta\rangle$ has order $3e$.

We show that $g(\cH_q/G)$ is given by Equation \eqref{genereindice3caso1}.
The subgroup $K$ contains exactly $3(a-1)$ elements of type (A); the other nontrivial elements of $K$ are of type (B1).
%Since $\gcd(\frac{e}{a^2},a)=1$, $S$ is a direct product $K\times \langle\alpha\rangle$.
Suppose that $G_T\ne K$, that is, $\alpha$ is not trivial. Let $\sigma\in G_T\setminus K$ and write $\sigma=\diag(\lambda\rho^{j},\mu\rho^{jm},1)$ with $\lambda^a=\mu^a=1$ and $0<j<\frac{e}{a^2}$. The element $\sigma$ is of type (A) if and only if either $\lambda\rho^j=1$, or $\mu\rho^{jm}=1$, or $\lambda\rho^j=\mu\rho^{jm}$. By direct checking using the numerical conditions on the parameters, none of these conditions holds.
%; here we use that $\frac{e}{a^2}$ is odd, $\gcd(a,\frac{e}{a^2})=1$, and $\gcd(\ell,\frac{e}{a^2})=\gcd(\ell-1,\frac{e}{a^2})=1$.
Therefore, every element in $G_T\setminus K$ is of type (B1).

Let $\sigma\in G\setminus G_T$, so that $\sigma=\diag(\delta,\epsilon,1)\cdot\beta$ or $\sigma=\diag(\delta,\epsilon,1)\cdot\beta^2$ for some $(q+1)$-th roots of unity $\delta$ and $\epsilon$. By direct checking $\sigma$ has order $3$. If $3\mid(q-1)$, then $\sigma$ is of type (B2).
If $3\mid q$, then $\sigma$ is of type (D), because no element of type (C) can act transitively on the vertices of $T$ (as it stabilizes each line through its center).
Equation \eqref{genereindice3caso1} now follows from Theorem \ref{caratteri} and the Riemann-Hurwitz formula. In fact $G$ has $3(a-1)$ elements of type (A), $e-3a+2$ elements of type (B1), and $2e$ elements of type either (B2) or (D) according to $3\mid(q-1)$ or $3\mid q$, respectively.

{\it (ii)}: let $G\leq\PGU(3,q)$ stabilize $T$ with $[G:G_T]=3$, $e$ be the order of $G_T$, and $\beta$ be an element of $G\setminus G_T$.
Then $\beta$ acts transitively on $T$ and we can assume that
$$ \beta=\begin{pmatrix} 0 & \gamma_1 & 0 \\ 0 & 0 & \gamma_2 \\ 1 & 0 & 0 \end{pmatrix} $$
for some $(q+1)$-th roots of unity $\gamma_1,\gamma_2$.
%; if this is not the case, replace $\beta$ with $\beta^2$.
The element $\beta$ has order $3$ and normalizes $G_T$, so that $G=G_T\rtimes \langle\beta\rangle$.
%Since $\beta^{-1}\diag(\lambda,1,1)\beta=\diag(1,\lambda,1)$ and $(\beta^2)^{-1}\diag(\lambda,1,1)\beta^2=\diag(\lambda^{-1},\lambda^{-1},1)$,
Since $\beta$ is a $3$-cycle on $T$,
the number of elements of type (A) with center $P_i$ is the same for $i=1,2,3$, say $a-1$; clearly, $a$ divides $q+1$.
Then the elements of type (A) in $G_T$ generate the subgroup $K=\{\diag(\lambda,\mu,1)\mid\lambda^a=\mu^a=1\}$ of order $a^2$; this implies $a^2\mid e$.
From the proof of Theorem \ref{fissatorepuntuale} applied to $G_T$ follows that $\frac{e}{a}\mid(q+1)$, $\frac{e}{a^2}$ is odd,
and
%every Sylow subgroup of $G_T/K$, and hence $G_T/K$ itself,
$G_T/K$ is cyclic; say $G_T/K=\langle\alpha K\rangle$.

Suppose that $|G_T/K|=\frac{e}{a^2}\ne1$, that is, $\alpha\notin K$ and $\alpha$ is of type (B1). We can assume that $\alpha=\diag(\rho,\rho^m,1)$ with $o(\alpha)=o(\rho)=\frac{e}{a^2}$ and $1<m<\frac{e}{a^2}$.
If $d=\gcd(o(\rho),|K|)$, then $\alpha^d\in K$ because $K$ contains all elements of order $d$ in $C_{q+1}\times C_{q+1}$. Thus, we can replace $\alpha$ with $\alpha^d$ and assume that $\gcd(o(\alpha),|K|)=1$; hence, $\gcd(\frac{e}{a^2},a)=1$ and $G_T=K\times\langle\alpha\rangle$.
Consider $\tilde\alpha=\beta^{-1}\alpha\beta=\diag(\rho^{-m},\rho^{1-m},1)\in G_T$. If $\tilde\alpha\notin\langle\alpha\rangle$, then as in the proof of Theorem \ref{fissatorepuntuale} we have that $\langle\alpha,\tilde\alpha\rangle\cong C_{\frac{e}{a^2}}\times C_{\frac{e}{a^2}}$ contains elements of type (A) not in $K$, a contradiction. Hence, $\tilde \alpha=\alpha^j$ for some $j$ with $1<j<\frac{e}{a^2}$. By direct computation, this is equivalent to require that $j=\frac{e}{a^2}-m$ and $\frac{e}{a^2}\mid(m^2-m+1)$. The same condition is required for $(\beta^2)^{-1}\alpha\beta^2\in\langle\alpha\rangle$.
Now the same argument as in point {\it (i)} provides the type of the elements of $G$ and therefore the genus of $\cH_q/G$ by means of Theorem \ref{caratteri} and the Riemann-Hurwitz formula.
\end{proof}

\begin{proposition}\label{indice3caso2}
Let $q$ be such that $3\mid(q+1)$.
\begin{itemize}
\item[(i)]
Let $a$, $e$, and $\ell$ be positive integers satisfying $e\mid(q+1)^2$, $a^2\mid e$, $\frac{e}{a}\mid(q+1)$, $2\nmid\frac{e}{a^2}$, $\gcd(\frac{e}{a^2},a)=1$, and $\ell\mid(q+1)$.
We also require that there exists a positive integer $m\leq \frac{e}{a^2}$ such that $\frac{e}{a^2}\mid(m^2-m+1)$.
Then there exists a subgroup $G\leq(C_{q+1}\times C_{q+1})\rtimes \mathbf{S}_3$ of order $3e$ such that $|G\cap(C_{q+1}\times C_{q+1})|=e$ and
\begin{equation}\label{genereindice3caso2}
g(\cH_q/G)=\frac{(q+1)(q-3a+1)+h\cdot e}{6e}, \qquad\textrm{with}\qquad h=\begin{cases}
2 & \textrm{if}\quad a\nmid\frac{q+1}{3}, \\
0 & \textrm{if}\quad a\mid\frac{q+1}{3},\;\ell\nmid\frac{q+1}{3}, \\
6 & \textrm{if}\quad a\mid\frac{q+1}{3},\;\ell\mid\frac{q+1}{3}. \\
\end{cases}
\end{equation}
\item[(ii)] Conversely, if $G\leq(C_{q+1}\times C_{q+1})\rtimes \mathbf{S}_3$ and $G\cap(C_{q+1}\times C_{q+1})$ has index $3$ in $G$, then the genus of $\cH_q/G$ is given by Equation \eqref{genereindice3caso2}, where: $e=|G|/3$; the number of homologies in $G$ with center $P_i$ is $a-1$ for $i=1,2,3$; there exist $\ell$ and $m$ such that $a,e,\ell,m$ satisfy the numerical assumptions in point {\it (i)}.
\end{itemize}
\end{proposition}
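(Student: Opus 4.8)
The plan is to follow the same two-step scheme as in Proposition \ref{indice3caso1}: in part {\it (i)} I build an explicit group $G$ realizing the given parameters $a,e,\ell$ (and $m$) and read off its genus from the Riemann--Hurwitz formula, and in part {\it (ii)} I show that an arbitrary such $G$ forces exactly these numerical conditions. The whole structural skeleton is identical to the case $3\nmid(q+1)$: one takes $K=\{\diag(\lambda,\mu,1)\mid\lambda^a=\mu^a=1\}$, the cyclic quotient $G_T/K=\langle\alpha K\rangle$ with $\alpha=\diag(\rho,\rho^m,1)$, $o(\rho)=e/a^2$ and $m^2-m+1\equiv0\pmod{e/a^2}$, and the $3$-cycle $\beta=\left(\begin{smallmatrix}0&\gamma_1&0\\0&0&\gamma_2\\1&0&0\end{smallmatrix}\right)$ normalizing $G_T$ via $\beta^{-1}\alpha\beta=\diag(\rho^{-m},\rho^{1-m},1)=\alpha^{-m}$. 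Hence the verifications that $G=G_T\rtimes\langle\beta\rangle$ has order $3e$, that the homologies of $G_T$ number $3(a-1)$ and generate $K$, and that every element of $G_T\setminus K$ is of type (B1), carry over verbatim from Proposition \ref{indice3caso1}. The single new phenomenon is the type of the $2e$ elements of $G\setminus G_T$: each such $\sigma$ is a $3$-cycle on $T$, so its cube is scalar and $\ord(\sigma)=3$; since now $3\mid(q+1)$ (whence also $3\mid(q^2-q+1)$), such an element is no longer of type (B2) or (D) as before, but of type (B1) or (B3).

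The key observation is a clean determinant criterion for this dichotomy. For $\sigma=\tau\beta$ with $\tau=\diag(\delta,\epsilon,1)\in G_T$ the characteristic polynomial is $x^3-\delta\epsilon\gamma_1\gamma_2$, so $\sigma$ has three distinct eigenvalues (never a homology), and its fixed triangle lies in $\PG(2,q^2)$ --- i.e.\ $\sigma$ is of type (B1) with $i(\sigma)=0$ --- exactly when these eigenvalues lie in $\F_{q^2}$, that is when $\Pi(\sigma):=\delta\epsilon\gamma_1\gamma_2$ is a cube in the group $C_{q+1}$ of $(q+1)$-th roots of unity; otherwise the triangle lies in $\PG(2,q^6)\setminus\PG(2,q^2)$ and $\sigma$ is of type (B3) with $i(\sigma)=3$ by Theorem \ref{caratteri}. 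Rescaling $\sigma$ multiplies $\Pi$ by a cube, so the condition is well defined in $\PGU(3,q)$, and for $x\in C_{q+1}$ it reads $x^{(q+1)/3}=1$. Writing $\overline{(\cdot)}$ for the image in $C_{q+1}/Q\cong C_3$ ($Q$ the subgroup of cubes) and $N\colon G_T\to C_{q+1}$, $\diag(\delta,\epsilon,1)\mapsto\delta\epsilon$, I must then count, coset by coset, the $\tau$ for which $\overline{N(\tau)}$ equals $-\overline{\gamma_1\gamma_2}$ (for $G_T\beta$) or $-2\overline{\gamma_1\gamma_2}$ (for $G_T\beta^2$), feed the resulting number $N_{B3}$ of type-(B3) elements into $\Delta=3(a-1)(q+1)+3N_{B3}$, and apply $q^2-q-2=3e(2g-2)+\Delta$; this returns the stated genus with $h=6-3N_{B3}/e$.

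Everything therefore reduces to identifying $\overline N\colon G_T\to C_3$. Since $N(K)$ is the full group of $a$-th roots of unity, $N(K)\subseteq Q$ precisely when $a\mid\frac{q+1}{3}$; thus if $a\nmid\frac{q+1}{3}$ then $\overline N$ is already surjective on $K$, each fibre has size $e/3$ in each coset, and one gets $N_{B3}=\tfrac{4}{3}e$, i.e.\ $h=2$, irrespective of $\gamma_1,\gamma_2$. The decisive technical point --- the step I expect to be the main obstacle --- is to prove that when $a\mid\frac{q+1}{3}$ the map $\overline N$ is in fact \emph{trivial}, equivalently that $N(\alpha)=\rho^{1+m}$ is always a cube. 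I plan to establish this by splitting on $3\mid o(\rho)$: if $3\nmid o(\rho)=e/a^2$, then $e/a^2\mid\frac{q+1}{a}=3\cdot\frac{q+1}{3a}$ together with $\gcd(e/a^2,3)=1$ forces $o(\rho)\mid\frac{q+1}{3}$, so $\rho$ itself is a cube; if $3\mid o(\rho)$, then reducing $m^2-m+1\equiv0$ modulo $3$ gives $m\equiv2\pmod 3$, whence $3\mid(1+m)$ and again $\rho^{1+m}$ is a cube. With $\overline N$ trivial the count collapses: in both cosets either all $e$ elements are of type (B1) (when $\overline{\gamma_1\gamma_2}=0$) or none are, so $N_{B3}\in\{0,2e\}$. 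Setting $\ell:=o(\gamma_1\gamma_2)$ and using $\overline{\gamma_1\gamma_2}=0\iff\gamma_1\gamma_2\in Q\iff\ell\mid\frac{q+1}{3}$ yields $h=6$ and $h=0$ respectively, matching the three cases of \eqref{genereindice3caso2}.

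It remains to organize these computations into the two halves of the statement. For part {\it (i)} I run the argument in reverse: given $a,e,\ell,m$ satisfying the hypotheses I pick $\rho$ of order $e/a^2$ and $\gamma_1,\gamma_2\in C_{q+1}$ with $o(\gamma_1\gamma_2)=\ell$ (possible since $\ell\mid(q+1)$, e.g.\ $\gamma_2=1$ and $\gamma_1$ of order $\ell$), assemble $G=\langle K,\alpha,\beta\rangle\leq\PGU(3,q)$ as above, and the type count just described returns the genus in \eqref{genereindice3caso2}. For part {\it (ii)} I start from an arbitrary $G$ with $[G:G_T]=3$ and $3\mid(q+1)$: the transitivity of $\beta$ on $T$ gives the common homology count $a-1$ and the subgroup $K$ they generate; Theorem \ref{fissatorepuntuale} applied to $G_T$ supplies $a^2\mid e$, $\frac{e}{a}\mid(q+1)$, $2\nmid\frac{e}{a^2}$, $\gcd(\frac{e}{a^2},a)=1$ and the cyclicity of $G_T/K$, while $\beta^{-1}\langle\alpha\rangle\beta=\langle\alpha\rangle$ forces $m^2-m+1\equiv0\pmod{e/a^2}$ exactly as in Proposition \ref{indice3caso1}; finally $\ell:=o(\gamma_1\gamma_2)$ satisfies $\ell\mid(q+1)$, and by the determinant criterion together with the triviality of $\overline N$ in the regime $a\mid\frac{q+1}{3}$ the intrinsic genus of $\cH_q/G$ is given by \eqref{genereindice3caso2}.
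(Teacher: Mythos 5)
Your proposal is correct and follows essentially the same route as the paper: the paper likewise reduces the type of $\sigma=\kappa\alpha^j\beta^{\pm1}$ to whether $t\lambda\mu\rho^{j(m+1)}$ is a cube in $C_{q+1}$ (phrased there via the explicit fixed points $(\bar x,(t\lambda\rho^j)^{-1}\bar x^2,1)$ with $\bar x^3=t\lambda\mu\rho^{j(m+1)}$ rather than via the characteristic polynomial), proves $\rho^{j(m+1)}\in Q$ by the same two-case argument on $3\mid o(\rho)$ and $m\equiv 2\pmod 3$, and performs the same $e/3$-per-fibre count in the case $a\nmid\frac{q+1}{3}$. Your homomorphism $\overline N\colon G_T\to C_3$ is just a tidy repackaging of that count, so no substantive difference or gap.
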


\begin{proof}
{\it (i)}:
let $a,e,\ell,m$ satisfy the assumptions in point {\it (i)}. Define $K,\alpha,S$ as in the proof of Proposition \ref{indice3caso1} and $\beta\in\PGU(3,q)$ by
$$ \beta=\begin{pmatrix} 0 & t & 0 \\ 0 & 0 & 1 \\ 1 & 0 & 0 \end{pmatrix}, $$
where $t\in\mathbb{F}_{q^2}$ has order $\ell$.
Let $G=\langle S,\beta\rangle$. Arguing as in the proof of Proposition \ref{indice3caso1} we have that $S=G_T$ has order $e$, $G=G_T\rtimes\langle\beta\rangle$ has order $3e$, $G_T$ has $3(a-1)$ elements of type (A) and any other nontrivial element of $G_T$ is of type (B1). Also, every element $\sigma\in G\setminus G_T$ has order $3$ and can be uniquely written as $\sigma=\kappa\alpha^j\beta$ or $\sigma=\kappa\alpha^j\beta^2$, where $\kappa=\diag(\lambda,\mu,1)\in K$ and $\alpha^j=\diag(\rho^j,\rho^{jm},1)$.
The element $\sigma$ has exactly $3$ fixed points in $\PG(2,\bar{\mathbb{F}}_{q^2})$, namely $(\bar x,(t\lambda\rho^{j})^{-1}\bar{x}^2,1)$ where $\bar{x}^3=t\lambda\mu\rho^{j(m+1)}$.
If the fixed points of $\sigma$ are $\mathbb{F}_{q^2}$-rational then $\sigma$ is of type (B1), otherwise $\sigma$ is of type (B3).
Then $\sigma$ is either of type (B1) or of type (B3), according to $o(t\lambda\mu\rho^{j(m+1)})\mid\frac{q+1}{3}$ or $o(t\lambda\mu\rho^{j(m+1)})\nmid\frac{q+1}{3}$, respectively.
If $3\mid o(\rho)=\frac{e}{a^2}$, then $3\mid(m^2-m+1)$ and hence $3\mid(m+1)$, so that $o(\rho^{j(m+1)})\mid\frac{q+1}{3}$; if $3\nmid o(\rho)$, then again $o(\rho^{j(m+1)})\mid\frac{q+1}{3}$. In any case, $\sigma$ is of type (B1) or (B3) according to $o(t\lambda\mu)\mid\frac{q+1}{3}$ or $o(t\lambda\mu)\nmid\frac{q+1}{3}$, respectively.
We make clear now the type of $\sigma$ in relation to the assumptions on $a$ and $\ell$.
\begin{itemize}
\item If $a\mid\frac{q+1}{3}$ and $\ell\mid\frac{q+1}{3}$, then the $2e$ elements of $G\setminus G_T$ are all of type (B1).
\item If $a\mid\frac{q+1}{3}$ and $\ell\nmid\frac{q+1}{3}$, then the $2e$ elements of $G\setminus G_T$ are all of type (B3).
\item If $a\nmid\frac{q+1}{3}$, then $G\setminus G_T$ contains $\frac{a^2}{3}\cdot[G_T:K]\cdot 2=\frac{2e}{3}$ elements of type (B1) and $2e-\frac{2e}{3}=\frac{4e}{3}$ elements of type (B3).
This number can be obtained counting the elements of type (B1) as follows.
Consider a primitive $(q+1)$-th root of unity $\epsilon$ and write $t=\epsilon^r$, $\lambda\mu=\epsilon^s$. Then $\sigma$ is of type (B1)
 if and only if $s\equiv-r\pmod3$. When $s$ is given, we have $\frac{a}{3}$ choices for $s$ such that $s\equiv-r\pmod3$. Hence, we have $\frac{a^2}{3}$ choices for the couple $(\lambda,\mu)$ such that $\sigma$ is of type (B1).
\end{itemize}
Then Equation \eqref{genereindice3caso2} follows by Theorem \ref{caratteri} and the Riemann-Hurwitz formula.

{\it (ii)}: let $G\leq\PGU(3,q)$ stabilize $T$ with $[G:G_T]=3$ and $|G_T|=e$. Then we can argue as in the proof of Proposition \ref{indice3caso1} to prove that $G=G_T\rtimes\langle\beta\rangle$ with
$$ \beta=\begin{pmatrix} 0 & \gamma_1 & 0 \\ 0 & 0 & \gamma_2 \\ 1 & 0 & 0 \end{pmatrix} $$
and $\ell:=o(\gamma_1\gamma_2)\mid(q+1)$, $G_T=K\times\langle\alpha\rangle$ where $K$ is the subgroup generated by the elements of type (A), $|K|=a^2$, $\alpha=\diag(\rho,\rho^{m},1)$, and the parameters $e,a,m,\ell$ satisfy the assumptions in point {\it (i)}.
Now the genus of $\cH_q/G$ is obtained as in point {\it (i)} by means of Theorem \ref{caratteri} and the Riemann-Hurwitz formula.
\end{proof}

\subsection{The pointwise stabilizer of $T$ has index $6$ in $G$}\label{sec:indice6}

In this section $[G:G_T]=6$.

\begin{proposition}\label{indice6}
\begin{itemize}
\item[(i)] Let $a$ be a divisor of $q+1$.
We choose $e=a^2$ if $3\nmid(q+1)$ or $3\mid a$; $e\in\{a^2,3a^2\}$ if $3\mid(q+1)$ and $3\nmid a$.
Then there exists a subgroup $G\leq(C_{q+1}\times C_{q+1})\rtimes \mathbf{S}_3$ of order $6e$ such that $|G\cap(C_{q+1}\times C_{q+1})|=e$ and
\begin{equation}\label{genereindice6}
%g(\cH_q/G)=\frac{(q+1)(q-3a+1-h)-2k-3r-s(q+2)-t+12e}{12e},
g(\cH_q/G)=\frac{(q+1)(q-3a+1-\frac{3e}{a})-2r-3s-t+12e}{12e},
\end{equation}
where
$$
%h=\begin{cases} \frac{3e}{a} & \textrm{if $q$ is odd,}\quad \\ 0 & \textrm{if $q$ is even;} \end{cases}\quad
r=\begin{cases} \frac{7e}{2} & \textrm{if}\quad q\equiv0\,\textrm{ or }\,1\!\!\!\pmod3,\;\textrm{$q$ is odd},\;a\nmid\frac{q+1}{2}, \\
\frac{3e}{2} & \textrm{if}\quad q\equiv2\!\!\!\pmod3,\;\textrm{$q$ is odd},\;a\nmid\frac{q+1}{2}, \\
2e & \textrm{if}\quad q\equiv0\,\textrm{ or }\,1\!\!\!\pmod3,\;\textrm{either $q$ is even or}\;a\mid\frac{q+1}{2}, \\
0 & \textrm{if}\quad q\equiv2\!\!\!\pmod3,\;\textrm{either $q$ is even or}\;a\mid\frac{q+1}{2}; \end{cases}
 $$
$$
s= \begin{cases} \frac{4e}{3} & \textrm{if}\quad q\equiv2\!\!\!\pmod3\;\textrm{ and }\; a\nmid\frac{q+1}{3}, \\
0 & \textrm{otherwise}; \end{cases}
\quad
t=\begin{cases} 0 & \textrm{if $q$ is odd,}\quad \\ 3e & \textrm{if $q$ is even.} \end{cases}
$$
\item[(ii)] Conversely, if $G\leq(C_{q+1}\times C_{q+1})\rtimes \mathbf{S}_3$ and $G\cap(C_{q+1}\times C_{q+1})$ has index $6$ in $G$, then the genus of $\cH_q/G$ is given by Equation \eqref{genereindice6}, where: $e=|G|/6$; the number of homologies in $G$ with center $P_i$ is $a-1$ for $i=1,2,3$; $a$ and $e$ satisfy the numerical assumptions in point {\it (i)}.
\end{itemize}
\end{proposition}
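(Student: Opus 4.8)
The plan is to read off the genus from the Riemann--Hurwitz formula $2g(\cH_q)-2=|G|(2g(\cH_q/G)-2)+\Delta$, in which $2g(\cH_q)-2=q^2-q-2$ and $|G|=6e$, so that the whole problem reduces to computing $\Delta=\sum_{\sigma\neq id}i(\sigma)$ via Theorem \ref{caratteri}. Since $G_T$ is the kernel of the permutation action $G\to\mathbf{S}_3$ on $T$ and $[G:G_T]=6$, this action is the full symmetric group, and I would split $G\setminus\{id\}$ into three blocks according to the image in $\mathbf{S}_3$: the $e-1$ nontrivial elements of $G_T$, the $3e$ elements inducing a transposition, and the $2e$ elements inducing a $3$-cycle. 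The decisive simplification is that conjugation by a $3$-cycle element cyclically permutes the three transposition-cosets and the two $3$-cycle-cosets; hence the three transposition-cosets contribute equally to $\Delta$ and so do the two $3$-cycle-cosets, and it suffices to analyse one coset of each kind. A single transposition-coset is exactly the index-$2$ situation of Propositions \ref{indice2pari}--\ref{indice2dispari} in the symmetric case $c=a$ (here the group $\langle G_T,\tau\rangle$ has order $2e$), and a single $3$-cycle-coset is exactly the index-$3$ situation of Propositions \ref{indice3caso1}--\ref{indice3caso2}; I would reuse those type-counts verbatim.

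For part {\it (i)} I would construct $G$ from the same building blocks used before: put $K=\{\diag(\lambda,\mu,1):\lambda^a=\mu^a=1\}$ of order $a^2$, take $G_T=K$ when $e=a^2$ and $G_T=K\times\langle\diag(\omega,\omega^2,1)\rangle$ with $\omega$ a primitive cube root of unity when $e=3a^2$ (which forces $3\mid(q+1)$ so that $\omega^{q+1}=1$, and $3\nmid a$ so that this order-$3$ factor lies outside $K$), and then adjoin a monomial $3$-cycle $\beta$ and a monomial transposition $\tau$ with suitable $(q+1)$-th roots of unity as entries. After checking $G\leq\PGU(3,q)$, $|G|=6e$, that $\beta,\tau$ normalise $G_T$, and $G\cap(C_{q+1}\times C_{q+1})=G_T$, the three blocks yield: $3(a-1)$ homologies in $G_T$ (each $i=q+1$); in each transposition-coset exactly $\frac{e}{a}$ homologies of type {\rm (A)} for $q$ odd, or $\frac{e}{a}$ elations of type {\rm (C)} for $q$ even, together with the type-{\rm (B1)}/{\rm (B2)} or type-{\rm (E)} remainder; and in each $3$-cycle-coset the type-{\rm (B2)}/{\rm (D)} split (when $3\nmid(q+1)$) or the type-{\rm (B1)}/{\rm (B3)} split (when $3\mid(q+1)$, governed by whether $a\mid\frac{q+1}{3}$). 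Feeding the resulting $i(\sigma)$ into Riemann--Hurwitz gives $\Delta=3(a-1)(q+1)+\frac{3e}{a}(q+1)+2r+3s+t$, which rearranges into Equation \eqref{genereindice6}; the piecewise values of $r,s,t$ are precisely the tally of these blocks across the congruence cases of $q$ modulo $2$ and modulo $3$.

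The main obstacle, and the real content of part {\it (ii)}, is the rigidity $e\in\{a^2,3a^2\}$: I must show that requiring \emph{both} a transposition and a $3$-cycle to normalise $G_T$ forces $|G_T/K|\in\{1,3\}$. Here I would play the two index results against each other. By the index-$2$ necessity argument every element of $G_T\setminus K$ is conjugate to one of the form $\diag(\rho,\rho^{-1},1)$, so writing $G_T=K\times\langle\alpha\rangle$ with $\alpha=\diag(\rho,\rho^m,1)$ gives $m\equiv-1\pmod{o(\rho)}$; by the index-$3$ necessity argument one simultaneously has $\frac{e}{a^2}\mid(m^2-m+1)$. Substituting $m\equiv-1$ yields $m^2-m+1\equiv 3$, whence $\frac{e}{a^2}\mid 3$, and the value $3$ survives exactly when an order-$3$ diagonal element exists outside $K$, i.e. when $3\mid(q+1)$ and $3\nmid a$. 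With the structure of $G_T$ and of $\beta,\tau$ thus pinned down, I would rerun the identical $\mathbf{S}_3$-symmetric block count of part {\it (i)} to identify $a$, $e$ and the parameters $r,s,t$, recovering Equation \eqref{genereindice6}. The only genuinely delicate bookkeeping is keeping the cases of $q\bmod 2$ and $q\bmod 3$ disjoint and exhaustive, so that the combined contributions of the two $3$-cycle-cosets and the three transposition-cosets collapse into the single triple $(r,s,t)$.
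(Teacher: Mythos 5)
Your proposal follows essentially the same route as the paper's proof: the same explicit construction $G=\left(K\times\langle\diag(\rho,\rho^{-1},1)\rangle\right)\rtimes\mathbf{S}_3$ with $K$ of order $a^2$ and $o(\rho)=3$, the same reduction of the type-count on $G\setminus G_T$ to Propositions \ref{indice2pari}--\ref{indice2dispari} (with $c=a$) and \ref{indice3caso1}--\ref{indice3caso2} applied to the index-$2$ and index-$3$ overgroups of $G_T$, and the same rigidity argument in part {\it (ii)}, playing $m\equiv-1\pmod{o(\rho)}$ (forced by the transposition) against $\frac{e}{a^2}\mid(m^2-m+1)$ (forced by the $3$-cycle) to get $\frac{e}{a^2}\mid 3$, with the value $3$ surviving only when $3\mid(q+1)$ and $3\nmid a$. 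The one step you leave implicit is that reusing Proposition \ref{indice3caso2} ``verbatim'' still carries the extra parameter $\ell=o(\delta_1\delta_2)$ of the $3$-cycle (three cases $h\in\{2,0,6\}$, not two); to collapse these to the two cases recorded in $s$ one must check, as the paper does via the relation obtained by conjugating the $3$-cycle by the transposition, that $o(\delta_1\delta_2)\mid\frac{q+1}{3}$ exactly when $a\mid\frac{q+1}{3}$ --- you assert the correct final dichotomy but do not supply this verification.
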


\begin{proof}
{\it (i)}: let $a$ and $e$ satisfy the numerical assumptions in point {\it (i)}.
In $\PGU(3,q)$ define
$$ K=\left\{\begin{pmatrix} \lambda & 0 & 0 \\ 0 & \mu & 0 \\ 0 &  0 & 1 \end{pmatrix}:\lambda^a=\mu^a=1\right\}, \; \varphi=\begin{pmatrix} 0 & 1 & 0 \\ 0 & 0 & 1 \\ 1 & 0 & 0 \end{pmatrix}, \; \psi=\begin{pmatrix} 0 & 1 & 0 \\ 1 & 0 & 0 \\ 0 & 0 & 1 \end{pmatrix},\; C=\langle\varphi,\psi\rangle \cong S_3.  $$
If $e=3a^2$, define $\alpha=\diag(\rho,\rho^{-1},1)\in\PGU(3,q)$ with $\rho^3=1$ and $S=\langle K,\alpha\rangle\cong K\times\langle\rho\rangle$; if $e=a^2$, define $S=K$.
Let $G=\langle S,C\rangle$.
By direct checking, $\varphi$ and $\psi$ normalize $K$ and $\alpha$, so that $G$ is a semmidirect product $S\rtimes C$.
Also, $G_T=S$ and $G_T$ has index $6$ in $G$.
%By direct checking $S\cap C=\{id\}$, $\varphi^{-1}K\varphi=\psi^{-1}K\psi=K$, $\varphi^{-1}\alpha\varphi=\alpha$, and $\psi^{-1}\alpha\psi=\alpha^{-1}$.
%Then $G$ is a semidirect product $G=S\rtimes C$ of order $6e$ such that $G_T=S$ has index $6$ in $G$.

As usual, we count the elements of different type in $G$ as described in Lemma \ref{classificazione}.
The subgroup $G_T$ contains exactly $3(a-1)$ elements of type (A); any other nontrivial element of $G_T$ is of type (B1).
The elements in $G\setminus G_T$ are contained in subgroups $L$ such that either $[L:G_T]=2$ or $[L:G_T]=3$; here, $L\cap(C_{q+1}\times C_{q+1})=G_T$.
Thus, we can apply either Propositions \ref{indice2pari} and \ref{indice2dispari}, or Propositions \ref{indice3caso1} and \ref{indice3caso2}.
%\begin{itemize}
%\item The subgroup $G_T$ contains exactly $3(a-1)$ elements of type (A); any other nontrivial element of $G_T$ is of type (B1).
%\item From the proofs of Propositions \ref{indice2pari} and \ref{indice2dispari} follows that $(G_T\rtimes \langle\psi\rangle)\setminus G_T$ contains exactly: $\frac{e}{a}$ elements of type (C) and $e-\frac{e}{a}$ elements of type (D), if $q$ is even; $\frac{e}{a}$ elements of type (A) and $e-\frac{e}{a}$ elements of type (B1), if $q$ is odd and $a\mid\frac{q+1}{2}$; $\frac{e}{a}$ elements of type (A), $\frac{e(a-2)}{2a}$ elements of type (B1), and $\frac{e}{2}$ elements of type (B2), if $q$ is odd and $a\nmid\frac{q+1}{2}$.
%The same holds when we consider the other involutions of $C$, that is, in the analysis of $(G_T\rtimes \langle\varphi\psi\rangle)\setminus G_T$ and $(G_T\rtimes \langle\varphi^2\psi\rangle)\setminus G_T$.
%\item From the proofs of Propositions \ref{indice3caso1} and \ref{indice3caso2} follows that $(G_T\rtimes \langle\varphi\rangle)\setminus G_T$ contains exactly: $2e$ elements of type (D) or (B2), if $q\equiv0\pmod3$ or $q\equiv1\pmod3$; $2e$ elements of type (B1), if $q\equiv2\pmod3$ and $a\mid\frac{q+1}{3}$; $\frac{2e}{3}$ elements of type (B1) and $\frac{4e}{3}$ elements of type (B3), if $q\equiv2\pmod3$ and $a\nmid\frac{q+1}{3}$.
%\end{itemize}
Equation \eqref{genereindice6} then follows by the Riemann-Hurwitz formula and Theorem \ref{caratteri}.

{\it (ii)}: let $G\leq\PGU(3,q)$ stabilize $T$ with $[G:G_T]=6$ and $|G_T|=e\mid(q+1)^2$.
The factor group $G/G_T$ acts faithfully on $T$, hence $G/G_T\cong \mathbf{S}_3$. Let $G/G_T=\langle\varphi G_T\rangle\rtimes\langle\psi G_T\rangle$ where $\varphi,\psi\in G\setminus G_T$ satisfy $\varphi^3\in G_T$ and $\psi^2\in G_T$.
%Up to relabeling $P_1,P_2,P_3$ and replacing $\varphi$ with $\varphi^2$, 
We can assume that
$$
\varphi=\begin{pmatrix} 0 & \delta_1 & 0 \\ 0 & 0 & \delta_2 \\ 1 & 0 & 0  \end{pmatrix}\quad \textrm{and}\quad
\psi=\begin{pmatrix} 0 & \gamma_1 & 0 \\ \gamma_2 & 0 & 0 \\ 0 & 0 & 1  \end{pmatrix}$$
for some $(q+1)$-th roots of unity $\delta_1,\delta_2,\gamma_1,\gamma_2$.
As $(\psi^{-1}\varphi\psi) G_T=\varphi^{-1}G_T$, we have that $\gamma:=\gamma_1\gamma_2$ and $\delta:=\frac{\delta_1^2\gamma_2}{\delta_2\gamma_1^2}$ are $a$-th roots of unity.
Then $\diag(1,\gamma^{-1},1)\in G_T$;
hence, we can replace $\psi$ with $\diag(1,\gamma^{-1},1)\cdot\psi$ and assume that $\gamma_2=\gamma_1^{-1}$.
%hence, we can replace $\psi$ with $\diag(1,\gamma^{-1},1)\cdot\psi$ and assume that $\gamma_2=\gamma_1^{-1}$.
and $\delta_1\delta_2=\frac{\delta_1^3}{\gamma_1^3\delta}$.
%thus, if $3\mid(q+1)$ and $a\mid\frac{q+1}{3}$, then $o(\delta_1\delta_2)\mid\frac{q+1}{3}$.
%Since $o(\varphi)=3$ and $o(\psi)=2$,
This yields $C=\langle\varphi,\psi\rangle\cong S_3$ is a complement for $G_T$ and $G=G_T\rtimes C$.

From $\varphi^{-1}G_T\varphi=G_T$ follows as in the proof of Proposition \ref{indice3caso1} that the elements of type (A) in $G_T$ with center $P_1,P_2,P_3$ are in the same number $a-1$, $a\mid(q+1)$, and generate a subgroup $K\leq G_T$ of order $|K|=a^2$.
Suppose that there exists $\alpha\in G_T\setminus K$, that is, $\frac{e}{a^2}>1$.
From $\psi^{-1}G_T\psi=G_T$ follows as in the proof of Proposition \ref{indice2pari} that $\alpha=\diag(\lambda,\lambda^{-1},1)$ for some $\lambda$ with $\lambda^e=1$.
From $\varphi^{-1}G_T\varphi=G_T$ follows as in the proof of Proposition \ref{indice3caso1} that $\frac{e}{a^2}\mid(\ell^2-\ell+1)$ with $\ell=-1$. Hence $o(\alpha)=3$, $e=3a^2$, and $3\nmid a$, since there are no elements of order $3$ in $(C_{q+1}\times C_{q+1})\setminus K$ if $3\mid a$.
Since now the structure of $G$ has been determined, the genus of $\cH_q/G$ can be computed arguing as above.
Here, use the following fact: if $3\mid(q+1)$ and $a\mid\frac{q+1}{3}$, then $o(\delta_1\delta_2)\mid\frac{q+1}{3}$; if $3\mid(q+1)$ and $a\nmid\frac{q+1}{3}$, then $o(\delta_1\delta_2)\nmid\frac{q+1}{3}$.
\end{proof}

\section{$G$ stabilizes a point $P\in\PG(2,q^2)\setminus\cH_q$ with $q$ even}\label{sec:polopolare}

In this section $q$ is even and $G$ is supposed to be contained in the maximal subgroup $M$ of $\PGU(3,q)$ of order $|M|=q(q-1)(q+1)^2$ stabilizing a pole-polar pair $(P,\ell)$ with respect to the unitary polarity associated to $\cH_q(\mathbb{F}_{q^2})$; here $P\in\PG(2,q^2)\setminus\cH_q$ and $|\ell\cap\cH_q|=|\ell\cap\cH_q(\mathbb{F}_{q^2})|=q+1$.

Following \cite[Section 3]{CKT2}, we use the plane model \eqref{M3} of $\cH_q$ and assume up to conjugation in $\PGU(3,q)$ that $P=(0,0,1)$ and $\ell$ is the line at infinity $Z=0$.
Note that the $\mathbb{F}_{q}$-rational points of $\cH_q$ are exactly the $q+1$ points of $\ell\cap\cH_q$.
Then
$$ M=\left\{\begin{pmatrix} a & b & 0 \\ c & d & 0 \\ 0 & 0 & 1 \end{pmatrix} \mid a,b,c,d\in\mathbb{F}_{q^2},\, ac^q-a^qc=0,\,bd^q-b^qd=0,\,bc^q-a^qd=-1,\,ad^q-b^qc=1 \right\}. $$
If $\sigma\in M$, we denote by $\det(\sigma)$ the determinant of the representative of $\sigma$ with entry $1$ on the third row and column; see Remark \ref{abuso}. Let
$$ H=\left\{\begin{pmatrix} a & b & 0 \\ c & d & 0 \\ 0 & 0 & 1 \end{pmatrix}\in M \mid a,b,c,d\in\mathbb{F}_{q},\, ad-bc=1 \right\}\leq M,\quad \Omega=\left\{\begin{pmatrix} \lambda & 0 & 0 \\ 0 & \lambda & 0 \\ 0 & 0 & 1 \end{pmatrix}\right\}\leq M. $$
The group $H$ is isomorphic to $\SL(2,q)$, and its action on $\ell\cap\cH_q$ is equivalent to the action of $\SL(2,q)$ in its usual permutation representation on $\mathbb{F}_{q}$.
The group $\Omega$ is cyclic of order $q+1$ and made by the homologies of $\PGU(3,q)$ with center $P$.
%We have $\Omega\cap H=\{id\}$ since $q$ is even, $\Omega$ and $H$ commute elementwise, and $|M|=|H|\cdot|\Omega|$. Hence, $M$ is the direct product
As $q$ is even, $H \cap \Omega$ is trivial and hence
$$M=H\times\Omega.$$
Let $G\leq M$, $G(H)=G\cap H$, and $G(\Omega)=G\cap \Omega$. Then $G/G(\Omega)$ acts faithfully on $\ell\cap\cH_q$ and is isomorphic to a subgroup of $\PSL(2,q)$.
Throughout this section, we will denote by $\omega\mid(q+1)$ the order of $G(\Omega)$.

\begin{remark}\label{spezzabile}
Let $G\leq M$ be such that $G/G(\Omega)$ is generated by elements whose order is coprime to $q+1$. Then $G=G(H)\times G(\Omega)$.
%, where $G_{H}\leq H$ and $G_H\cong G/G(\Omega)$.
\end{remark}

\begin{proof}
Let $\alpha_1 G(\Omega),\ldots,\alpha_r G(\Omega)$ be generators of $G/G(\Omega)$ of order $o_1,\ldots,o_r$, respectively. Then $\beta_1=\alpha_1^{q+1},\ldots,\beta_r=\alpha_r^{q+1}$ have the same orders $o_1,\ldots,o_r$; also, $\det(\beta_1)=\cdots=\det(\beta_r)=1$ and hence $\beta_i\in H$ for all $i$'s.
The subgroup $L=\langle\beta_1,\ldots,\beta_r\rangle$ of $G$ induces the whole group $\langle\alpha_1\ldots,\alpha_r\rangle=G/G(\Omega)$ and $L\cap G(\Omega)=\{id\}$, so that $L\cong G/G(\Omega)$. Thus, $L=G(H)$ and claim follows.
\end{proof}

We now compute the genus of $\cH_q/G$ for any $G\leq M$, using Theorem \ref{Di} for $G/G(\Omega)$; recall that $\SL(2,q)\cong\PGL(2,q)\cong\PSL(2,q)$ since $q$ is even.
%In almost all cases Remark \ref{spezzabile} can be applied.
%It turns out that either Remark \ref{spezzabile} can be applied, or the genus of $\cH_q/G$ has already been computed in Section \ref{sec:triangolo}.

If $G=G(\Omega)$, then $g(\cH_q/G)$ is computed in Theorem \ref{fissatorepuntuale} (see also \cite[Theorem 5.8]{GSX}) as $g(\cH_q/G)=1+\frac{(q+1)(q-|G|-1)}{2|G|}$; hence, in the following we will assume that $G/G(\Omega)$ is not trivial.

Let $G/G(\Omega)$ be cyclic of order a divisor of $q+1$, say $G/G(\Omega)=\langle\alpha G(\Omega)\rangle$ with $\alpha^{q+1}\in G(\Omega)$.
From Lemma \ref{classificazione}, $\alpha$ is either of type (A) or of type (B1).
If $\alpha$ is of type (A), then the center of $\alpha$ is a point of $\ell$, because $\alpha\notin G(\Omega)$ and $\alpha$ commutes with $G(\Omega)$ elementwise. In any case, $\alpha$ stabilizes pointwise a self-polar triangle $\{P_0,P_1,P_2\}\subset\PG(2,q^2)\setminus\cH_q$.
For any $\beta\in G$ we have $\beta=\alpha^d \bar g$ for some integer $d$ and some $\bar g\in G(\Omega)$; hence, $\beta$ stabilizes $\{P_0,P_1,P_2\}$ pointwise.
Therefore, the groups $G\leq M$ such that $|G/G(\Omega)|\mid(q+1)$ are exactly the groups considered in Theorem \ref{fissatorepuntuale}, and the genus of $\cH_q/G$ is characterized by Equation \eqref{generefissatore}.

\begin{proposition}
Let $G\leq M$ be such that $G/G(\Omega)\cong\PSL(2,2)$. If $n$ is even, then
$$ g(\cH_q/G)=
% 1+\frac{q^2-q-2-(2\omega\cdot2 + 3(q+2) + (\omega-1)(q+1) + 3(\omega-1)\cdot1 )}{12\omega}$$
%$$=\frac{q^2-q-2-4\omega-3q-6-\omega q+q-\omega+1-3\omega+3+12\omega}{12\omega} $$
\frac{q^2-\omega q-3q+4\omega-4}{12\omega}. $$
If $n$ is odd, either
\begin{equation}\label{primocasocheaccade}
g(\cH_q/G)=%\frac{(q+1)(q-6-\omega-3+1)+9\omega}{12\omega}=
\frac{(q+1)(q-\omega-8)+9\omega}{12\omega},
\end{equation}
or
\begin{equation}\label{secondocasocheaccade}
g(\cH_q/G)=%\frac{(q+1)(q-2\cdot 3^k-\omega-3+1)+9\omega}{12\omega}
\frac{(q+1)(q-2\cdot 3^k-\omega-2)+9\omega}{12\omega}
\end{equation}
where $3^k$ is the maximal power of $3$ dividing $\omega$ and $k\geq1$.
Both cases \eqref{primocasocheaccade} and \eqref{secondocasocheaccade} occur for some $G\leq M$ with $G/G(\Omega)\cong\PSL(2,2)$.
\end{proposition}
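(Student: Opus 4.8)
The plan is to exploit the splitting $M = H \times \Omega$ together with Remark~\ref{spezzabile} to pin down the isomorphism type of $G$, and then to feed a type-by-type census of the nontrivial elements of $G$ into the Riemann--Hurwitz formula $q^2-q-2 = |G|\,(2g(\cH_q/G)-2) + \Delta$, with $\Delta = \sum_{\sigma \neq \mathrm{id}} i(\sigma)$ and each $i(\sigma)$ read off from Theorem~\ref{caratteri}. First I would observe that $\PSL(2,2) \cong \mathbf{S}_3$ is generated by its involutions, whose order $2$ is coprime to the odd number $q+1$; hence Remark~\ref{spezzabile} applies and $G = G(H) \times G(\Omega)$, with $G(H) \cong \mathbf{S}_3$ sitting inside $H \cong \SL(2,q)$ and $|G(\Omega)| = \omega$. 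Every nontrivial $\sigma \in G$ is then uniquely written as $\sigma = hz$ with $h \in G(H)$, $z \in G(\Omega)$, and $|G| = 6\omega$.

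Next I would classify the elements $hz$. The $\omega-1$ elements with $h = \mathrm{id}$ and $z \neq \mathrm{id}$ are the homologies of $\Omega$, of type (A) with $i = q+1$. The three involutions of $G(H)$ are unipotent in characteristic $2$, hence elations of type (C) with $i = q+2$; multiplying such an involution by a nontrivial $z$ gives an element of order $2\,\ord(z)$ with $\ord(z)\mid(q+1)$ nontrivial, i.e. of type (E) with $i = 1$, yielding $3(\omega-1)$ elements. The remaining elements come from the two elements $h$ of order $3$ in $G(H)$, and here the behaviour splits according to the residue of $q$ modulo $3$: for $n$ even one has $3\mid(q-1)$, so an order-$3$ element of $\SL(2,q)$ is $\F_q$-diagonalisable and, multiplied by any $z$, gives an element of order dividing $q^2-1$ but not $q+1$, i.e. of type (B2) with $i = 2$; for $n$ odd one has $3\mid(q+1)$, the eigenvalues are primitive cube roots of unity $\zeta,\zeta^{-1}\in\fqs$, and $hz$ has order dividing $q+1$ and fixes the rational self-polar triangle spanned by the two eigenpoints of $h$ and the pole $P$, so it is of type (A) or (B1).

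The heart of the argument is the $n$-odd case: I would determine, among the $hz$ with $h$ of order $3$, exactly how many are homologies (type (A), $i = q+1$) rather than of type (B1) ($i = 0$). Writing $hz \sim \diag(\zeta\lambda,\zeta^{-1}\lambda,1)$ with $\lambda$ the scalar of $z$, such an element is a homology precisely when two of its eigenvalues coincide, that is when $\lambda \in \{\zeta,\zeta^{-1}\}$; since the cube roots of unity form the unique subgroup of order $3$ of the cyclic group $\F_{q^2}^{*}$, whether such $\lambda$ occur in $G(\Omega)$ is governed exactly by the $3$-part $3^k$ of $\omega = |G(\Omega)|$. Carrying out this count precisely is the delicate step and is what separates the two genera \eqref{primocasocheaccade} and \eqref{secondocasocheaccade}; this is where I expect the main difficulty, since one must track the eigenvalues of $hz$ against the $3$-adic structure of $G(\Omega)$ and check that no spurious eigenvalue coincidences arise (which would create extra homologies or alter the type). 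Once the number of type-(A) elements is known in each subcase, substituting the corresponding $\Delta$ into Riemann--Hurwitz and simplifying produces the stated formulas, the $n$-even formula being obtained identically from the type-(B2) count; finally, prescribing $G(\Omega)$ once with $3\nmid\omega$ and once with $3\mid\omega$ exhibits groups realising both genera, proving that both cases occur.
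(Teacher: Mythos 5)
Your treatment of the $n$ even case coincides with the paper's: split $G=G(H)\times G(\Omega)$, classify the six conjugacy types of elements, and apply Riemann--Hurwitz. For $n$ odd, however, the paper takes a different and, for the stated conclusion, essential route that your plan misses. The paper first shows that the order-$3$ coset of $G/G(\Omega)$ lifts to an element $\alpha$ of type (B1), so that $G$ stabilizes the self-polar triangle fixed pointwise by $\alpha$ with the pointwise stabilizer of index $2$, and then invokes Proposition \ref{indice2pari}. At that point it splits into two structural cases: $G=G(H)\times G(\Omega)$, fed into Proposition \ref{indice2pari} with $a=3$, $c=\omega$, $e=3\omega$ and yielding \eqref{primocasocheaccade}; and $G\neq G(H)\times G(\Omega)$, where $G\cong(C_{3^{k+1}}\rtimes C_2)\times C_{\omega/3^k}$ and the count $a=3^k$ produces \eqref{secondocasocheaccade}. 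The $k$-dependence of \eqref{secondocasocheaccade} comes entirely from this second, non-split case.

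Your blanket appeal to Remark \ref{spezzabile} erases that case, and the count you then describe cannot deliver the statement as written. With $G=G(H)\times G(\Omega)$ and $G(H)\cong\mathbf{S}_3$, your eigenvalue analysis of $hz\sim\diag(\zeta\lambda,\zeta^{-1}\lambda,1)$ detects only whether the primitive cube roots of unity lie in $G(\Omega)$, i.e. whether $3\mid\omega$: one finds exactly $4$ extra homologies if $3\mid\omega$ and none otherwise, giving $g=\frac{(q+1)(q-\omega-8)+9\omega}{12\omega}$ (formula \eqref{primocasocheaccade}) in the first case and $g=\frac{(q+1)(q-\omega-4)+9\omega}{12\omega}$ in the second. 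The latter is \eqref{secondocasocheaccade} only with $3^k=1$, which the statement excludes by requiring $k\geq1$, and no outcome of your count ever reproduces \eqref{secondocasocheaccade} with $k\geq2$, because the genus you compute depends on $\omega$ only through the condition $3\mid\omega$ and not through the exact $3$-adic valuation $k$. So you must either confront the possibility $G\neq G(H)\times G(\Omega)$ and explain how it alters the homology count (this is where the paper gets the second formula), or, if you insist that Remark \ref{spezzabile} forces the splitting for $\mathbf{S}_3$ (which is generated by its involutions), acknowledge that your argument then proves something different from the stated dichotomy and reconcile the discrepancy. As written, the claim that substituting your $\Delta$ ``produces the stated formulas'' is exactly the step that is not justified.
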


\begin{proof}
Suppose that $3\mid(q-1)$, i.e. $q$ is an even power of $2$.
Then $G=G(H)\times G(\Omega)$ with $G(H)\cong\PSL(2,2)\cong\mathbf{S}_3$.
By Lemma \ref{classificazione}, the nontrivial elements of $G$ are as follows:
$2\omega$ elements of order $3$ times a divisor of $\omega$, of type (B2); $3$ involutions, of type (C); $\omega-1$ elements in $G(\Omega)$, of type (A); $3(\omega-1)$ elements of order $2$ times a nontrivial divisor of $\omega$, of type (E).
The claim follows from the Riemann-Hurwitz formula and Theorem \ref{caratteri}.

Suppose that $3\mid(q+1)$, i.e. $q$ is an odd power of $2$.
The group $G/G(\Omega)\cong\PSL(2,2)\cong\mathbf{S}_3$ contains a cyclic normal subgroup $\langle\alpha G(\Omega)\rangle$ of order $3$.
Since $G(\Omega)$ is central in $G$, $\langle\alpha\rangle$ is normal in $G$.
As $\alpha^3\in G(\Omega)$ and $\alpha$ fixes $P$, Lemma \ref{classificazione} implies that $o(\alpha)\mid(q+1)$ and $\alpha$ is of type (A) or (B1).
We show that $\alpha$ is of type (B1). Suppose by contradiction that $\alpha$ is of type (A). As $\alpha\notin \Omega$, the axis of $\alpha$ passes through $P$ and the center of $\alpha$ lies on $\ell$. Hence, $\alpha$ has exactly $2$ fixed points $Q$ and $R$ on $\ell$, and $Q,R\notin \cH_q$; we can assume that $PQ$ is the axis and $R$ is the center of $\alpha$. Let $\beta$ be an involution of $G$ (for instance, choose $\beta=\gamma^{q+1}$ for any involution $\gamma G(\Omega)$ of $G/G(\Omega)$). Then $\beta(Q)=R$ and $\beta(R)=Q$, so that $\beta^{-1}\alpha\beta$ has type (A) with axis $PR$ and center $Q$; this is a contradiction, because $\beta$ normalizes $\alpha$ and $\alpha$ does not contain elements of type (A) with center different from $R$. Thus, $\alpha$ is of type (B1). Hence, $G$ acts on the self-polar triangle $T\subset\PG(2,q^2)\setminus\cH_q$ fixed pointwise by $\alpha$, and the pointwise stabilizer of $T$ in $G$ has index $2$ in $G$. Then $g(\cH_q/G)$ can be computed by means of Proposition \ref{indice2pari}.

If $G=G(H)\times G(\Omega)$, then we apply Proposition \ref{indice2pari} {\it (ii)}, where $a=3$, $c=\omega$, $e=3\omega$. In fact, $c=|G(\Omega)|$, and $a=3$ because the elements of type (A) with axis passing through $P$ are obtained as the product of an element of order $3$ in $G(H)$ by an element of order $3$ in $G(\Omega)$.
%then the nontrivial elements of $G$ are as follows: $2(\omega-\gcd(\omega,3)+1)$ elements of type (B1), which are the product of an element of order $3$ in $G(H)$ by an element of order different from $3$ in $G(\Omega)$; $2(\gcd(\omega,3)-1)$ elements of order $3$ and type (A) which are possibly the product of an element of order $3$ in $G(H)$ by an element of order $3$ in $G(\Omega)$; $3$ involutions, of type (C); $\omega-1$ elements in $G(\Omega)$, of type (A); $3(\omega-1)$ elements of order $2$ times a nontrivial divisor of $\omega$, of type (E).
%The claim follows from the Riemann-Hurwitz formula and Theorem \ref{caratteri}.

If $G\ne G(H)\times G(\Omega)$, then $3\mid|G(\Omega)|$ and $G=(C_{3^{k+1}}\rtimes C_2)\times C_{\omega/3^k}$, where: $3^k$ is the maximal power of $3$ which divides $\omega$; $C_{3^k}$ is generated by an element of type (B1) and order $3^k$ whose cube lies in $G(\Omega)$; $C_2$ is generated by any involution of $G$; $C_{\omega/3^k}$ is the subgroup of $G(\Omega)$ of order $\frac{\omega}{3^k}$.
Such a group $G$ actually exists in $M$. To see this fact, let $\cH_q$ have equation \eqref{M1} and assume $P=(0,0,1)$, $\ell:Z=0$; define in $M$ the elements
$$\alpha=\begin{pmatrix} \lambda\mu & 0 & 0 \\ 0 & \lambda^{-1}\mu & 0 \\ 0 & 0 & 1 \end{pmatrix}, \quad
\beta=\begin{pmatrix} 0 & \rho & 0 \\ \rho^{-1} & 0 & 0 \\ 0 & 0 & 1 \end{pmatrix}, \quad
\delta=\begin{pmatrix} \xi & 0 & 0 \\ 0 & \xi & 0 \\ 0 & 0 & 1 \end{pmatrix},
$$
with $o(\lambda)=3$, $o(\mu)=3^{k+1}$, $o(\rho)\mid(q+1)$, $o(\xi)=\frac{\omega}{3^k}$.
Then $G=\langle\alpha,\beta,\delta\rangle\cong (C_{3^{k+1}}\rtimes C_2)\times C_{\omega/3^k}$ is the desired group.
We apply Proposition \ref{indice2pari} {\it (ii)}, where $a=3^k$, $c=\omega$, $e=3\omega$.
\end{proof}

Apart from the cases considered above of $G/G(\Omega)$ cyclic of order dividing $q+1$ and $G/G(\Omega)\cong\PSL(2,2)$, we will find that the subgroup $G/G(\Omega)$ of $\PSL(2,q)$ is generated by elements of order coprime to $q+1$.
Hence, in the proofs of Propositions \ref{ciclicoq-1} to \ref{conPSL} we make use of Remark \ref{spezzabile} to get $G=G(H)\times G(\Omega)$.

\begin{proposition}\label{ciclicoq-1}
Let $G\leq M$ be such that $G/G(\Omega)$ is cyclic of order $d\mid(q-1)$. Then
$$ g(\cH_q/G)=\frac{(q+1)(q-\omega-1)+2\omega}{2d\omega}. $$
\end{proposition}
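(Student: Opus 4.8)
The plan is to reduce to the direct-product case via Remark \ref{spezzabile}, classify the nontrivial elements of $G$ by the types of Lemma \ref{classificazione}, and then apply the Riemann--Hurwitz formula.

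First I would note that, since $q$ is even, $q-1$ is odd and $\gcd(q-1,q+1)=1$. As $G/G(\Omega)$ is cyclic of order $d\mid(q-1)$, it is generated by a single element whose order $d$ is coprime to $q+1$, so Remark \ref{spezzabile} gives $G=G(H)\times G(\Omega)$ with $G(H)\cong G/G(\Omega)$ cyclic of order $d$ and $|G(\Omega)|=\omega$; in particular $|G|=d\omega$.

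Next I would exploit the toral structure of $G(H)$. Being a cyclic subgroup of $H\cong\SL(2,q)$ of order dividing $q-1$, $G(H)$ is a \emph{split} torus, hence simultaneously diagonalizable over $\mathbb{F}_q$ and fixing two points $P_2,P_3$ of $\ell\cap\cH_q\subseteq\cH_q(\fqs)$. Choosing a common eigenbasis with the pole $P$ as third fundamental point, every $h\in G(H)$ becomes $\diag(t,t^{-1},1)$ with $t^{q-1}=1$, while every $z\in G(\Omega)$ is the scalar homology $\diag(\lambda,\lambda,1)$ with $\lambda^{q+1}=1$. The $\omega-1$ nontrivial elements of $G(\Omega)$ are homologies with center $P$, so they are of type {\rm (A)} with $i(\sigma)=q+1$ by Theorem \ref{caratteri}. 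For an element $\sigma=hz\in G\setminus G(\Omega)$ (so $h\ne id$, and there are $(d-1)\omega$ of these) one has $\sigma=\diag(\lambda t,\lambda t^{-1},1)$; here $(\lambda t)^{q^2-1}=1$ while $(\lambda t)^{q+1}=t^{2}\ne1$ since $t\ne1$ has odd order. Hence $\ord(\sigma)\mid(q^2-1)$ and $\ord(\sigma)\nmid(q+1)$, so $\sigma$ is of type {\rm (B2)} with $i(\sigma)=2$ by Theorem \ref{caratteri}(5).

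Finally I would assemble the degree of the different divisor
$$\Delta=(\omega-1)(q+1)+2(d-1)\omega$$
and substitute into $2g(\cH_q)-2=q^2-q-2=|G|\bigl(2g(\cH_q/G)-2\bigr)+\Delta$ with $|G|=d\omega$. Solving for $g(\cH_q/G)$ yields numerator $q^2-1-\omega(q-1)=(q-1)(q+1-\omega)=(q+1)(q-\omega-1)+2\omega$ over $2d\omega$, which is the claimed formula. I expect the only delicate point to be the type classification of the mixed elements $\sigma=hz$: one must rule out that any of them degenerates to a homology and confirm type {\rm (B2)}. This is exactly where the coprimality $\gcd(q-1,q+1)=1$ is essential, since it simultaneously forbids eigenvalue collisions among $\lambda t,\lambda t^{-1},1$ and forces $\ord(\sigma)\nmid(q+1)$; the remainder is routine Riemann--Hurwitz bookkeeping.
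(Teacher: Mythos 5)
Your proposal is correct and follows essentially the same route as the paper: reduce to $G=G(H)\times G(\Omega)$ via Remark \ref{spezzabile}, count $\omega-1$ elements of type (A) and $(d-1)\omega$ elements of type (B2), and apply Riemann--Hurwitz with $\Delta=(\omega-1)(q+1)+2(d-1)\omega$. The only difference is that you spell out the diagonalization and the coprimality argument establishing type (B2), which the paper leaves implicit by citing Theorem \ref{caratteri}.
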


\begin{proof}
We have $G=G(H) \times G(\Omega)$ with $G(H)$ cyclic of order $d$. From Theorem \ref{caratteri}, $G$ has $\omega-1$ elements of type (A) and $(d-1)\omega$ elements of type (B2). By the Riemann-Hurwitz formula, $(q+1)(q-2)=d\omega(2g(\cH_q/G)-2)+\Delta$ with $\Delta=(q+1)(\omega-1)+2(d-1)\omega$.
\end{proof}

\begin{proposition}
Let $G\leq M$ be such that $G/G(\Omega)$ is elementary abelian of order $2^f$, $f\leq n$. Then
$$ g(\cH_q/G)=\frac{(q+1)(q-\omega-2^f)+\omega(2^f+1)}{2^{f+1}\omega}. $$
\end{proposition}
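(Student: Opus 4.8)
The plan is to follow the same scheme used throughout this section: first reduce $G$ to a direct product, then classify its nontrivial elements by type via Lemma \ref{classificazione} and Theorem \ref{caratteri}, and finally assemble the different contributions through the Riemann--Hurwitz formula. The first step is to notice that, since $q$ is even, $q+1$ is odd, so the order $2^f$ of $G/G(\Omega)$ is coprime to $q+1$. Being elementary abelian, $G/G(\Omega)$ is generated by its involutions, whose order $2$ is coprime to $q+1$; hence Remark \ref{spezzabile} applies and yields the splitting $G=G(H)\times G(\Omega)$, where $G(H)\cong G/G(\Omega)$ is elementary abelian of order $2^f$ inside $H\cong\SL(2,q)$.

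Next I would determine the type of each nontrivial $\sigma\in G$ according to its factorization $\sigma=\tau\phi$ with $\tau\in G(H)$ and $\phi\in G(\Omega)$. The $\omega-1$ nontrivial elements of $G(\Omega)$ are homologies with center $P$ whose order divides $q+1$; as $q+1$ is odd none of them is an involution, so they are of type {\rm(A)} and contribute $i(\sigma)=q+1$ by Theorem \ref{caratteri}(3). The $2^f-1$ nontrivial elements of $G(H)$ are involutions, and since $p=2$ an element of order $2$ can only be of type {\rm(C)} in Lemma \ref{classificazione}: type {\rm(A)} is excluded because $2\nmid(q+1)$, type {\rm(B)} because $p\mid 2$, type {\rm(D)} requires order $4$ when $p=2$, and type {\rm(E)} requires order $2d$ with $d$ a nontrivial divisor of $q+1$. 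Thus each contributes $i(\sigma)=q+2$ by Theorem \ref{caratteri}(9). Finally, the $(2^f-1)(\omega-1)$ mixed elements $\tau\phi$ with $\tau,\phi\neq id$ have order $2\cdot o(\phi)$ with $1\neq o(\phi)\mid(q+1)$, so they are of type {\rm(E)} with $i(\sigma)=1$ by Theorem \ref{caratteri}(10). A quick check confirms that these counts sum to $1+(\omega-1)+(2^f-1)+(2^f-1)(\omega-1)=2^f\omega=|G|$, so every element has been accounted for.

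From here the genus is purely a matter of bookkeeping. I would compute the degree of the different divisor as
$$\Delta=(\omega-1)(q+1)+(2^f-1)(q+2)+(2^f-1)(\omega-1),$$
and then substitute into $2g(\cH_q)-2=|G|\,(2g(\cH_q/G)-2)+\Delta$, using $2g(\cH_q)-2=q^2-q-2$ and $|G|=2^f\omega$. Solving for $g(\cH_q/G)$ and simplifying gives the asserted closed formula.

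I do not anticipate a genuine obstacle: once the element types are pinned down the computation is entirely numerical. The only point that deserves a moment's care is the identification of the involutions of $G(H)$ as elations of type {\rm(C)} rather than type {\rm(D)} elements; this is forced, however, because in characteristic $2$ the type {\rm(D)} elements have order $4$, leaving type {\rm(C)} as the only possibility for an involution. The remaining effort is the routine Riemann--Hurwitz arithmetic.
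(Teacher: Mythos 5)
Your proposal is correct and follows essentially the same route as the paper: split $G=G(H)\times G(\Omega)$ via Remark \ref{spezzabile}, identify the $\omega-1$ elements of type (A), the $2^f-1$ involutions of type (C), and the $(2^f-1)(\omega-1)$ mixed elements of type (E), and then apply the Riemann--Hurwitz formula; your $\Delta$ and the resulting arithmetic check out against the stated genus.
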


\begin{proof}
We have $G=G(H)\times G(\Omega)$ with $G(H)$ elementary abelian of order $2^f$. A nontrivial element $\sigma\in G$ is of type (C) if $o(\sigma)=2$, of type (A) if $o(\sigma)\mid(q+1)$, and of type (E) if $o(\sigma)$ is $2$ times a nontrivial divisor of $q+1$. The claim follows from the Riemann-Hurwitz formula and Theorem \ref{caratteri}.
\end{proof}

Let $G\leq M$ be such that $G/G(\Omega)$ is dihedral of order $2d$ with $d\mid(q+1)$. We have $G=G(H)\times G(\Omega)$, where $G(H)=\langle\alpha\rangle\rtimes\langle\beta\rangle$ is dihedral of order $2d$, $o(\alpha)=d$, $o(\beta)=2$. Since $\beta^{-1}\alpha\beta=\alpha^{-1}$, we have that $\alpha$ is of type (B1) and stabilizes pointwise a self-polar triangle $\{P_0,P_1,P_2\}$, while $\beta$ is of type (C), fixes $P_0$, and interchanges $P_1$ and $P_2$.
Therefore $G$ is already considered in Proposition \ref{indice2pari} and the genus of $\cH_q/G$ is given by Equation \eqref{genereindice2}, with $a=1$, $c=\omega$, $e=d\omega$.

\begin{proposition}
Let $G\leq M$ be such that $G/G(\Omega)$ is dihedral of order $2d$ with $d\mid(q-1)$. Then
$$ g(\cH_q/G)=%\frac{(q+1)(q-\omega-d-1)+\omega(d+2)}{4d\omega}=
\frac{q^2-q\omega-qd+\omega d+\omega-d-1}{4d\omega}. $$
\end{proposition}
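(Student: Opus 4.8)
The plan is to reduce first to a direct product decomposition of $G$, then classify every nontrivial element purely by its order, read off the contribution $i(\sigma)$ from Theorem \ref{caratteri}, and finally substitute into the Riemann-Hurwitz formula. The whole argument is driven by orders, so very little geometry is needed beyond what Lemma \ref{classificazione} and Theorem \ref{caratteri} already encode.

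First I would invoke Remark \ref{spezzabile}. Since $q$ is even, $q+1$ is odd and $\gcd(q-1,q+1)=1$; the dihedral group $G/G(\Omega)$ of order $2d$ is generated by its cyclic part (of order $d\mid(q-1)$) together with a reflection (of order $2$), both coprime to $q+1$. Hence $G=G(H)\times G(\Omega)$ with $G(H)\cong\langle\alpha\rangle\rtimes\langle\beta\rangle$ dihedral of order $2d$, $o(\alpha)=d$, $o(\beta)=2$, and $G(\Omega)=\langle\gamma\rangle$ cyclic of order $\omega$. Every nontrivial element is then uniquely $\alpha^j\gamma^k$ or $\alpha^j\beta\gamma^k$, and since $\Omega$ centralizes $H$ the orders are immediate to compute.

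Second, I would classify the four resulting families. The element $\alpha$ has order $d\mid(q-1)$, so acting through $H\cong\SL(2,q)$ on $\ell\cap\cH_q$ it fixes two points of $\cH_q(\mathbb{F}_{q^2})$ on $\ell$ together with $P\notin\cH_q$, i.e.\ it is of type (B1)\ldots in fact of type (B2). Because $\gcd(d,\omega)=1$, for $j\neq0$ the product $\alpha^j\gamma^k$ has order $\mathrm{lcm}(o(\alpha^j),o(\gamma^k))$ dividing $q^2-1$ but not $q+1$, so by Theorem \ref{caratteri}(5) it is of type (B2) with $i(\sigma)=2$; there are $(d-1)\omega$ such elements. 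The nontrivial powers $\gamma^k$ are the $\omega-1$ homologies of $\Omega$, of type (A) with $i(\sigma)=q+1$ by Theorem \ref{caratteri}(3). Each reflection $\alpha^j\beta$ is an involution, and as $q$ is even every order-$2$ element is an elation of type (C), so $i(\sigma)=q+2$ by Theorem \ref{caratteri}(9); this gives $d$ elements. Finally, for $k\neq0$ the element $\alpha^j\beta\gamma^k$ has order $2\cdot o(\gamma^k)$ with $o(\gamma^k)$ a nontrivial divisor of $q+1$, hence is of type (E) with $i(\sigma)=1$ by Theorem \ref{caratteri}(10); there are $d(\omega-1)$ of these. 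As a consistency check, the four counts sum to $(d-1)\omega+(\omega-1)+d+d(\omega-1)=2d\omega-1=|G|-1$.

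Third, I would assemble
$$\Delta=(\omega-1)(q+1)+2(d-1)\omega+d(q+2)+d(\omega-1),$$
and substitute into $2g(\cH_q)-2=|G|(2g(\cH_q/G)-2)+\Delta$ with $2g(\cH_q)-2=q^2-q-2$ and $|G|=2d\omega$; solving for $g(\cH_q/G)$ yields the stated formula. The main care point is ensuring that the mixed products $\alpha^j\gamma^k$ never degenerate into homologies of type (A) nor acquire order dividing $q+1$: this is exactly where $\gcd(q-1,q+1)=1$ is essential, since it forces the three eigenvalues $a\lambda,\,a^{-1}\lambda,\,1$ of $\alpha^j\gamma^k$ to be pairwise distinct whenever $a\neq1$ (no repeated eigenvalue, hence no homology) and keeps the order off $q+1$, placing every such element cleanly in type (B2).
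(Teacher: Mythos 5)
Your proposal is correct and follows essentially the same route as the paper: decompose $G=G(H)\times G(\Omega)$ via Remark \ref{spezzabile}, sort the $2d\omega-1$ nontrivial elements into the four classes (A), (B2), (C), (E) with the same counts $(\omega-1)$, $(d-1)\omega$, $d$, $d(\omega-1)$, and apply Theorem \ref{caratteri} and Riemann--Hurwitz. Your extra verification that the mixed products $\alpha^j\gamma^k$ cannot degenerate into homologies (via coprimality of $q-1$ and $q+1$) is a detail the paper leaves implicit; just tidy the stray ``type (B1)\ldots in fact of type (B2)'' self-correction, since the correct classification is (B2) throughout.
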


\begin{proof}
We have $G=G(H)\times G(\Omega)$ with $G(H)$ dihedral of order $2d$. From Lemma \ref{classificazione}, the nontrivial elements of $G$ are as follows: $(d-1)\omega$ elements of order a divisor of $q^2-1$ but not of $q+1$, of type (B2); $\omega-1$ elements in $G(\Omega)$ of order a divisor of $q+1$, of type (A); $d$ involutions, of type (C); $d(\omega-1)$ elements of order $2$ times a nontrivial divisor of $q+1$, of type (E). Then the claim follows from the Riemann-Hurwitz formula and Theorem \ref{caratteri}.
\end{proof}

\begin{proposition}
Let $G\leq M$ be such that $G/G(\Omega)\cong \mathbf{A}_4$, with $n$ even. Then
$$ g(\cH_q/G)=%\frac{(q+1)(q-\omega-4)+5\omega}{24\omega}=
\frac{q^2-q\omega+4\omega-3q-4}{24\omega}. $$
\end{proposition}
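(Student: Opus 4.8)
The plan is to reduce immediately to a direct product and then run a type-by-type count feeding into Riemann-Hurwitz. First I would observe that since $n$ is even we have $q=2^n\equiv1\pmod 3$, hence $3\mid(q-1)$, and that $\mathbf{A}_4$ is generated by its elements of order $2$ and $3$. The involutions of $G/G(\Omega)\leq\PSL(2,q)$ have order $2$, and the order-$3$ elements lift to elements whose order divides $q-1$; both $2$ and $3$ are coprime to $q+1$ (recall $\gcd(q-1,q+1)=1$ for $q$ even, and $3\nmid(q+1)$ here). Thus $G/G(\Omega)$ is generated by elements of order coprime to $q+1$, and Remark \ref{spezzabile} gives $G=G(H)\times G(\Omega)$ with $G(H)\cong\mathbf{A}_4$ and $|G(\Omega)|=\omega$, so $|G|=12\omega$.

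Next I would classify every nontrivial element of $G$ by its type in Lemma \ref{classificazione}, using that $G(H)$ and $G(\Omega)$ commute elementwise, so an element $hz$ with $h\in G(H)$, $z\in G(\Omega)$ has order $\mathrm{lcm}(o(h),o(z))$. The nontrivial elements of $G(\Omega)$ are homologies with center $P$, hence of type (A); there are $\omega-1$ of them, each with $i=q+1$. The three involutions of $\mathbf{A}_4$ are elations fixing a point of $\cH_q$, i.e. of type (C) with $i=q+2$. Each product of an involution with a nontrivial element of $G(\Omega)$ has order $2d$ with $1\ne d\mid(q+1)$, hence is of type (E) with $i=1$; there are $3(\omega-1)$ of these. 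Finally, each of the eight order-$3$ elements of $\mathbf{A}_4$, multiplied by any $z\in G(\Omega)$, has order $3m$ with $3\mid(q-1)$ and $m\mid(q+1)$, so $3m\mid(q^2-1)$ but $3m\nmid(q+1)$; by Theorem \ref{caratteri}(5) these $8\omega$ elements are of type (B2) with $i=2$. A quick tally confirms $1+(\omega-1)+3+3(\omega-1)+8\omega=12\omega$, accounting for all of $G$.

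Then I would assemble the different-divisor degree from Theorem \ref{caratteri},
\[
\Delta=(\omega-1)(q+1)+3(q+2)+3(\omega-1)+16\omega=q\omega+2q+20\omega+2,
\]
and substitute into the Riemann-Hurwitz formula with $2g(\cH_q)-2=q^2-q-2$ and $|G|=12\omega$; solving $q^2-q-2=12\omega\bigl(2g(\cH_q/G)-2\bigr)+\Delta$ for $g(\cH_q/G)$ yields exactly $\dfrac{q^2-q\omega+4\omega-3q-4}{24\omega}$. The only genuinely delicate step is the type determination for the mixed elements: one must be sure that an order-$3$ element of $G(H)$ times an element of $G(\Omega)$ really lands in type (B2) (which rests on $3\nmid(q+1)$, a consequence of $n$ being even) and that an involution times a nontrivial $G(\Omega)$-element is type (E) rather than anything else; once the order-arithmetic is pinned down, everything else is bookkeeping.
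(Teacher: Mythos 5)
Your proposal is correct and follows essentially the same route as the paper: split $G=G(H)\times G(\Omega)$ via Remark \ref{spezzabile} (using that $n$ even forces $3\mid(q-1)$, so $\mathbf{A}_4$ is generated by elements of order coprime to $q+1$), count the $\omega-1$ type (A), $3$ type (C), $3(\omega-1)$ type (E) and $8\omega$ type (B2) elements, and feed $\Delta$ into Riemann--Hurwitz. Your tally and the resulting genus formula both check out; you have merely written out explicitly the bookkeeping the paper leaves implicit.
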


\begin{proof}
We have $G=G(H)\times G(\Omega)$ with $G(H)\leq H$, $G(H)\cong \mathbf{A}_4$. By Lemma \ref{classificazione}, $G$ contains $3$ elements of type (C), $8\omega$ elements of type (B2), $\omega-1$ elements of type (A), and $3(\omega-1)$ elements of type (E). The claim follows from the Riemann-Hurwitz formula and Theorem \ref{caratteri}.
\end{proof}

The case $G/G(\Omega)\cong \mathbf{S}_4$ does not occur, since $16\nmid(q^2-1)$.

\begin{proposition}
Let $G\leq M$ be such that $G/G(\Omega)\cong\mathbf{A}_5$, with $n$ even. Then
$$ g(\cH_q/G)= \frac{(q+1)(q-\omega-16)+65\omega-48\epsilon}{120\omega}, $$
where
$$ \epsilon=\begin{cases} \omega & \textrm{if}\quad 5\mid(q-1); \\ 0 & \textrm{if} \quad 5\mid(q+1),\; 5\nmid\omega; \\ q+1 & \textrm{if} \quad 5\mid\omega. \end{cases} $$
\end{proposition}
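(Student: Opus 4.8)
The plan is to first reduce to the direct product decomposition. Since $q=2^n$ with $n$ even, $2$ has order $4$ modulo $5$, so $5\mid(q^2-1)$; this is exactly the arithmetic condition of Theorem \ref{Di}(vi) under which $\mathbf{A}_5\leq\PSL(2,q)$, and the same parity forces $3\mid(q-1)$ as well. As $q+1$ is odd and the simple group $\mathbf{A}_5$ is generated by its involutions (whose order $2$ is coprime to $q+1$), Remark \ref{spezzabile} applies and yields $G=G(H)\times G(\Omega)$ with $G(H)\cong\mathbf{A}_5$ and $|G(\Omega)|=\omega\mid(q+1)$, so that $|G|=60\omega$.

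Next I would classify the $59$ nontrivial elements of $G(H)$ according to the types of Lemma \ref{classificazione}. Its $15$ involutions are unipotent, hence elations of type (C); its $20$ elements of order $3$ are semisimple and, since $3\mid(q-1)$, split, hence of type (B2). The $24$ elements of order $5$ are the delicate ones: they are of type (B2) when $5\mid(q-1)$, and of type (B1) when $5\mid(q+1)$, in the latter case having eigenvalues $\{\nu,\nu^{-1},1\}$ with $\nu$ a primitive $5$th root of unity in the $2\times 2$ block.

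Then I would run through the products $hz$ with $h\in G(H)$ and $z=\diag(\lambda,\lambda,1)\in G(\Omega)$, using that $o(z)\mid\omega\mid(q+1)$ is odd. An involution times a nontrivial $z$ has order $2d$ with $d\mid(q+1)$ nontrivial, hence is of type (E); the nontrivial elements of $G(\Omega)$ are homologies of type (A); each order-$3$ element times any $z$ remains of type (B2), since $3\nmid(q+1)$. The decisive step is the order-$5$ part. When $5\mid(q-1)$ these products stay of type (B2). When $5\mid(q+1)$, the product of $h$ (eigenvalues $\nu,\nu^{-1},1$) with $z$ has eigenvalues $\nu\lambda,\nu^{-1}\lambda,1$, and this is a homology of type (A) exactly when two of them coincide, i.e.\ when $\lambda\in\{\nu,\nu^{-1}\}$; this is possible only if $5\mid\omega$, and then gives precisely $2$ admissible choices of $z$ for each of the $24$ elements $h$, hence $48$ homologies, the remaining order-$5$ products having three distinct eigenvalues of order dividing $q+1$ and being of type (B1). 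This eigenvalue bookkeeping, which separates type (A) from type (B1) among the order-$5$ products, is the \emph{main obstacle}, and it is exactly what produces the three regimes encoded by $\epsilon$: namely $\epsilon=\omega$ when $5\mid(q-1)$, $\epsilon=0$ when $5\mid(q+1)$ with $5\nmid\omega$, and $\epsilon=q+1$ when $5\mid\omega$.

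Finally I would feed these counts into the Riemann--Hurwitz formula $2g(\cH_q)-2=|G|(2g(\cH_q/G)-2)+\Delta$, with $\Delta=\sum_{\sigma\neq id}i(\sigma)$ and the values of Theorem \ref{caratteri} ($i=q+1$ for (A), $0$ for (B1), $2$ for (B2), $q+2$ for (C), $1$ for (E)). Taking the case $5\mid(q+1)$, $5\nmid\omega$ as baseline gives $\Delta=\omega q+56\omega+14q+14$ and, after simplifying with $q^2-15q-16=(q-16)(q+1)$, the stated genus with $\epsilon=0$. The two remaining cases differ from the baseline only in the $24\omega$ order-$5$ products: they change $\Delta$ by $+48\omega$ (when all become type (B2), matching $\epsilon=\omega$) or by $+48(q+1)$ (when $48$ become type (A), matching $\epsilon=q+1$), which is precisely the $-48\epsilon$ correction in the numerator. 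This confirms the single closed formula.
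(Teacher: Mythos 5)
Your proof is correct and follows essentially the same route as the paper's: split $G=G(H)\times G(\Omega)$ via Remark \ref{spezzabile}, classify the elements of $G$ by type using Lemma \ref{classificazione} (with the order-$5$ products $\diag(\nu\lambda,\nu^{-1}\lambda,1)$ handled by exactly the same eigenvalue argument, giving the $2$ choices of $\lambda\in\{\nu,\nu^{-1}\}$ and hence the $48$ homologies when $5\mid\omega$), and conclude by Riemann--Hurwitz and Theorem \ref{caratteri}. Your explicit accounting of the $15(\omega-1)$ type-(E) elements and the baseline-plus-correction organization of $\Delta$ are only presentational differences; the counts and the final formula agree with the paper in all three regimes of $\epsilon$.
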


\begin{proof}
Since $p=2$, the assumption for $n$ to be even is equivalent to require $5\mid(q^2-1)$, so that $\PSL(2,q)$ admits a subgroup isomorphic to $\mathbf{A}_5$ by Theorem \ref{Di}.
We have $G=G(H)\times G(\Omega)$ with $G(H)\cong \mathbf{A}_5$.
By Lemma \ref{classificazione}, $G$ contains: $15$ elements of order $2$, which are of type (C); $20\omega$ elements of order $3$ times a divisor of $q+1$, which are of type (B2); $\omega-1$ nontrivial elements in $G_\Omega$, which are of type (A); $24$ elements of order $5$ in $G(H)$, which are of type (B1) if $5\mid(q+1)$ and of type (B2) if $5\mid(q-1)$; $24(\omega-1)$ elements of order divisible by $5$ in $G\setminus (G(H)\cup G(\Omega))$.
Consider the $24(\omega-1)$ elements $\sigma_i$ of order $5$ in $G\setminus (G(H)\cup G(\Omega))$. If $5\mid(q-1)$, then all $\sigma_i$'s are of type (B2); if $5\mid(q+1)$ and $5\nmid\omega$, then all $\sigma_i$'s are of type (B1).
Suppose that $5\mid\omega$. Let $\sigma_i=\alpha_i\beta_i$ with $\alpha\in G(H)$, $o(\alpha_i)=5$, $\beta_i\in G(\Omega)\setminus\{id\}$.
Using the plane model \eqref{M1} we can assume up to conjugation that $\alpha_i$ and $\beta_i$ stabilize pointwise the reference triangle, $\alpha_i=\diag(\lambda,\lambda^{-1},1)$ where $\lambda^5=1$, and $\beta_i=\diag(\mu,\mu,1)$. The type of $\sigma_i$ is either (A) or (B1), and when $\alpha$ is given there are exactly $2$ choices of $\beta_i$ for which $\sigma_i$ is of type (A), namely $\beta_i=\diag(\lambda,\lambda,1)$ or $\beta_i=\diag(\lambda^{-1},\lambda^{-1},1)$. Then $24\cdot2$ elements are of type (A), and $24(\omega-3)$ are of type (B1).
Now the claim follows from the Riemann-Hurwitz formula and Theorem \ref{caratteri}.
\end{proof}

\begin{proposition}
Let $G\leq M$ be such that $G/G(\Omega)$ is the semidirect product of an elementary abelian $2$-group of order $2^f$ by a cyclic group of order $d$, with $f\leq n$ and $d\mid \gcd(2^f-1,q-1)$. Then
%$$ g(\cH_q/G)=\frac{(q+1)(q-\omega-2^f)+\omega(2^f+1)-2^{f+1}d(\omega-1)}{2^{f+1}d\omega}. $$
$$ g(\cH_q/G)=\frac{(q+1)(q-\omega-2^f)+\omega (2^f+1)}{2^{f+1}d\omega}. $$
\end{proposition}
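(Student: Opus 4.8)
The plan is to reduce $G$ to a direct product, classify its nontrivial elements by the type list of Lemma \ref{classificazione}, read off $i(\sigma)$ from Theorem \ref{caratteri}, and conclude via Riemann--Hurwitz. First I would observe that, since $q$ is even, $q+1$ is odd, while $G/G(\Omega)$ is generated by involutions (the elementary abelian factor of order $2^f$) together with elements of order dividing $d\mid(q-1)$; all these orders are coprime to $q+1$. Hence Remark \ref{spezzabile} applies and gives $G=G(H)\times G(\Omega)$ with $G(H)\cong G/G(\Omega)$ of order $2^f d$ and $G(\Omega)$ cyclic of order $\omega$. Note also that $d$ is odd, as $d\mid(q-1)$ with $q$ even.

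The core step is the element analysis inside $G(H)\le H\cong\SL(2,q)$. Its elementary abelian part $E$ of order $2^f$ is a normal Sylow $2$-subgroup (index $d$ odd), so every involution of $G(H)$ lies in $E$; there are exactly $2^f-1$ of them, each an elation, i.e. of type (C). The key point, and the step I expect to be the main obstacle, is to show that every element of $G(H)\setminus E$ is split semisimple of order dividing $q-1$, hence of type (B2). I would argue that $G(H)$ has no element of mixed order $2m$ with $m>1$ odd: such an element would produce a commuting pair consisting of a nontrivial split semisimple element and an involution, contradicting the fact that in $\SL(2,q)$ with $q$ even the centralizer of a nontrivial split semisimple element is cyclic of odd order $q-1$ and thus contains no involution. (Equivalently, one may invoke that the type (vii) subgroups in Theorem \ref{Di} are point stabilizers on the projective line, hence contained in a Borel subgroup, where every element is either unipotent of order $\le 2$ or diagonalizable of order dividing $q-1$.) Consequently $G(H)\setminus E$ consists of $2^f(d-1)$ elements, all of odd order $>1$ dividing $q-1$.

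With this in hand, the types in $G=G(H)\times G(\Omega)$ follow by order arithmetic, using that the $\Omega$-part has order dividing $q+1$, coprime to the $H$-part contribution. Writing $\sigma=hz$ with $h\in G(H)$ and $z\in G(\Omega)$, one gets: $h=\mathrm{id},\ z\ne\mathrm{id}$ yields the $\omega-1$ homologies of type (A); $h$ an involution and $z=\mathrm{id}$ yields the $2^f-1$ elements of type (C); $h$ an involution and $z\ne\mathrm{id}$ yields order $2\cdot o(z)$ with $1\ne o(z)\mid(q+1)$, i.e. type (E), contributing $(2^f-1)(\omega-1)$ elements; and $h$ of odd order $>1$ dividing $q-1$ together with any $z$ yields order dividing $q^2-1$ but not $q+1$, i.e. type (B2), contributing $2^f(d-1)\omega$ elements. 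A count confirms these four families exhaust all $2^f d\omega-1$ nontrivial elements of $G$.

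Finally I would assemble the degree of the different divisor from Theorem \ref{caratteri}, namely
$$\Delta=(\omega-1)(q+1)+(2^f-1)(q+2)+2\cdot 2^f(d-1)\omega+(2^f-1)(\omega-1),$$
and substitute into the Riemann--Hurwitz formula $q^2-q-2=2^f d\omega\,(2g(\cH_q/G)-2)+\Delta$. Solving for $g(\cH_q/G)$ and simplifying, the numerator collapses to $(q+1)(q-\omega-2^f)+\omega(2^f+1)$, which yields the claimed value $g(\cH_q/G)=\frac{(q+1)(q-\omega-2^f)+\omega(2^f+1)}{2^{f+1}d\omega}$.
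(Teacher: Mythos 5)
Your proposal is correct and follows essentially the same route as the paper: split $G=G(H)\times G(\Omega)$ via Remark \ref{spezzabile}, count the $\omega-1$ elements of type (A), $2^f-1$ of type (C), $(2^f-1)(\omega-1)$ of type (E) and $2^f(d-1)\omega$ of type (B2), then apply Theorem \ref{caratteri} and Riemann--Hurwitz. Your extra justification that every element of $G(H)\setminus E_{2^f}$ is semisimple of order dividing $q-1$ (via the Borel/centralizer argument) is a detail the paper leaves implicit, but the argument and the final computation coincide.
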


\begin{proof}
We have $G=G(H)\times G(\Omega)$ with $G(H)\leq H$ and $G(H)=E_{2^f}\rtimes C_d$, where $E_{2^f}$ is elementary abelian of order $2^f$ and $C_d$ is cyclic of order $d$.
By Lemma \ref{classificazione}, $G$ contains: $2^f-1$ involutions, which are of type (C); $\omega-1$ nontrivial elements of $G_\Omega$, which are of type (A); $(2^f-1)(\omega-1)$ elements of order $2$ times a nontrivial divisor of $q+1$, which are of type (E); $2^f (d-1) \omega$ elements whose order divides $q^2-1$ but does not divide $q+1$, which are of type (B2).
Then the claim follows from the Riemann-Hurwitz formula and Theorem \ref{caratteri}.
\end{proof}

\begin{proposition}\label{conPSL}
Let $G\leq M$ be such that $G/G(\Omega)\cong\PSL(2,2^f)$ with $f\mid n$ and $f>1$. Then
%$$ g(cH_q/G)=\frac{(q+1)\left[q-\omega-2^f(2^f-1)\gcd(2^f+1,\omega)-2^f\right]+2^f+1}{2^{f+1}(2^f+1)
%(2^f-1)\omega}. $$
$$ g(\cH_q/G)=\frac{(q+1)\left[q-\omega-2^f(2^f-1)\gcd(2^f+1,\omega)-2^f\right]+(2^f+1)\omega(2^{2f}-2^f+1)}{2^{f+1}(2^f+1)(2^f-1)\omega}$$
if $n/f$ is odd, while
$$g(\cH_q/G)=\frac{(q+1)(q-2^{2f}-\omega)-\omega(2 \cdot 2^{3f}-2^{2f}-2 \cdot 2^{f}-1)}{2^{f+1}(2^f+1)(2^f-1)\omega}+1$$
if $n/f$ is even.
\end{proposition}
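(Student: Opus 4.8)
The plan is first to pin down the abstract structure of $G$, then to enumerate its elements according to the types of Lemma \ref{classificazione}, and finally to apply the Riemann--Hurwitz formula together with Theorem \ref{caratteri}. Throughout I would write $u=2^f$ to keep the bookkeeping manageable.

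\emph{Step 1 (reduction and structure).} Since $q$ is even, $q+1$ is odd; and since $f>1$, the simple group $\PSL(2,2^f)$ is generated by its involutions, whose order $2$ is coprime to $q+1$. Hence Remark \ref{spezzabile} applies and $G=G(H)\times G(\Omega)$ with $G(H)\cong\PSL(2,2^f)\cong\SL(2,2^f)$ and $|G(\Omega)|=\omega$. I would use the conjugacy structure of $\SL(2,2^f)$: besides the identity it has $u^2-1$ involutions (the unipotent class), the $u(u+1)(u/2-1)$ nontrivial split-torus elements of order dividing $2^f-1$, and the $u^2(u-1)/2$ nontrivial nonsplit-torus elements of order dividing $2^f+1$. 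Writing each element uniquely as $\sigma\tau$ with $\sigma\in G(H)$ and $\tau=\diag(\lambda,\lambda,1)\in G(\Omega)$, a representative of $\sigma\tau$ in the block form of $M$ is the block-diagonal matrix with blocks $\lambda A$ and $1$, so its three eigenvalues are $\lambda\kappa,\lambda\kappa^{-1},1$, where $\kappa,\kappa^{-1}$ are the eigenvalues of $A\in\SL(2,q)$.

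\emph{Step 2 (types, and the parity dichotomy).} Using that $\tau$ is central of odd order dividing $q+1$, I would compute $o(\sigma\tau)$ and read off the type from Theorem \ref{caratteri}. A pure involution is of type (C) ($i=q+2$), since for $p=2$ order $2$ forces an elation; an involution times a nontrivial $\tau$ has order $2d$ with $1\ne d\mid(q+1)$, hence is of type (E) ($i=1$); a pure nontrivial $\tau$ is a homology of type (A) ($i=q+1$). Split-torus elements, pure or multiplied by any $\tau$, have order dividing $q^2-1$ but not $q+1$, hence are of type (B2) ($i=2$). The nonsplit elements depend on $n/f$: since $f\mid n$, one has $2^f+1\mid(q+1)$ exactly when $n/f$ is odd, and $2^f+1\mid(q-1)$ when $n/f$ is even. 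In the \emph{even} case every nonsplit element times any $\tau$ again has order dividing $q^2-1$ but not $q+1$, so is of type (B2), and there are no type-(B1) elements at all; a direct count then leaves exactly $\omega\,2^f(2^{2f}-2^f-1)$ elements of type (B2), alongside the $\omega-1$ homologies of $G(\Omega)$, the $u^2-1$ involutions, and the $(u^2-1)(\omega-1)$ type-(E) products. Substituting these counts into $2g(\cH_q)-2=|G|(2g(\cH_q/G)-2)+\Delta$ yields the stated formula for $n/f$ even.

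\emph{Step 3 (the delicate count for $n/f$ odd; main obstacle).} Here a nonsplit element $\sigma$ has $o(\kappa)\mid(2^f+1)\mid(q+1)$, so every product $\sigma\tau$ has order dividing the odd number $q+1$ and is therefore either a homology (type A) or of type (B1); only the type-(A) ones contribute to $\Delta$. The crux is to count these homologies. From the eigenvalues $\lambda\kappa,\lambda\kappa^{-1},1$ one sees that $\sigma\tau$ is a homology with center different from $P$ precisely when $\lambda A$ is diagonalizable with $1$ as a simple eigenvalue, i.e. when $\lambda\in\{\kappa,\kappa^{-1}\}$ with $\kappa\ne1$; equivalently $\kappa$ is a common $\omega$-th and $(2^f+1)$-th root of unity, so $o(\kappa)\mid g_0:=\gcd(2^f+1,\omega)$. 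Counting the nonsplit matrices $A$ with $o(\kappa)\mid g_0$ — there are $2^{f-1}(2^f-1)(g_0-1)$ of them, as each such element lies in a unique nonsplit torus (its centralizer), of which there are $2^{f-1}(2^f-1)$, each containing $g_0-1$ elements of order dividing $g_0$ — and multiplying by the two admissible values $\lambda=\kappa^{\pm1}$, I obtain $2^f(2^f-1)(g_0-1)$ homologies with center off $\ell$. Adding the $\omega-1$ homologies of $G(\Omega)$ gives the full type-(A) census. I expect this gcd-count to be the only genuinely delicate point.

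\emph{Step 4 (conclusion).} With the type census complete, $\Delta$ equals $(q+1)\cdot(\#\text{type A})+(q+2)\cdot(\#\text{type C})+1\cdot(\#\text{type E})+2\cdot(\#\text{type B2})$, the type-(B1) elements contributing $0$. Plugging the counts of Steps 2 and 3 into the Riemann--Hurwitz formula and simplifying (writing the numerator over $2u(u+1)(u-1)\omega$) reproduces the two displayed expressions according to the parity of $n/f$. The even case is routine once Step 2 is established; the odd case rests entirely on the count of Step 3.
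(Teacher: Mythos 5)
Your proposal is correct and follows essentially the same route as the paper: Remark \ref{spezzabile} to split $G=G(H)\times G(\Omega)$, the order statistics of $\PSL(2,2^f)$, the parity of $n/f$ to decide whether the nonsplit-torus elements have order dividing $q-1$ or $q+1$, and the $\gcd(2^f+1,\omega)$ count of type-(A) products in the odd case, which you organize by tori/centralizers rather than by the list of cyclic subgroups of order $2^f+1$ but which yields the same total $2^f(2^f-1)\left(\gcd(2^f+1,\omega)-1\right)$. The only quibble is the phrase ``center off $\ell$'' in Step 3 --- those homologies have center on $\ell$ (and axis through $P$) --- but this is immaterial to the count and to the Riemann--Hurwitz computation.
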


\begin{proof}
%If $f>1$ or $3\nmid|G(\Omega)|$,
From Theorem \ref{Di} follows that the elements in $\PSL(2,2^f)$ of order coprime to $|G(\Omega)|$ generate $\PSL(2,2^f)$; hence $G=G(H)\times G(\Omega)$, where $G(H)\cong\PSL(2,2^f)$.
%\textcolor{red}{(SISTEMA IL CASO $f=1$)}
%If $f=1$ and $3\mid|G(\Omega)|$, then $G/G(\Omega)\cong\mathbf{S}_3$. Let $\alpha\in G$ with $o(\alpha G(\Omega))=3$
Now we use the order statistics and the subgroup lattice of $\PSL(2,q)$, as decribed in \cite[Chapter II.8]{Hup}.
\begin{itemize}
\item $G(\Omega)$ contains $\omega-1$ elements of type (A).
\item $\PSL(2,2^f)$ contains exactly $(2^f-1)(2^f+1)$ elements of order $2$, which are of type (C). The product of one of them with a nontrivial element of $G(\Omega)$ has order $2$ times a nontrivial divisor of $q+1$; thus, we have $(2^f-1)(2^f+1)(\omega-1)$ elements of type (E).

\item $\PSL(2,2^f)$ contains exactly $\binom{2^f+1}{2}(2^f-2)=\frac{2^f(2^f+1)(2^f-2)}{2}$ nontrivial elements whose order divides $2^f-1$ and hence also $q-1$. The product of one of them with a nontrivial element of $G(\Omega)$ has order a divisor of $q^2-1$ not dividing $q+1$. Thus, we have $\frac{2^f(2^f+1)(2^f-2)}{2}\cdot\omega$ elements of type (B2).

\item $\PSL(2,2^f)$ contains exactly $\frac{2^f(2^{2f}-2^f)}{2}$ nontrivial elements whose order divides $2^f+1$.

Suppose that $n/f$ is odd. Then $2^f+1$ divides $q+1$. Since $H$ contains no elements of type (A), any such element is of type (B1). Together with the identity, they form $\frac{2^{2f}-2^f}{2}$ distinct cyclic groups of order $2^f+1$ which intersect pairwise trivially. Consider a cyclic subgroup $C\leq G(H)$ of order $2^f+1$.
We use the plane model \eqref{M1} of $\cH_q$, and assume up to conjugacy that the self-polar triangle fixed pointwise by $C$ is the reference triangle and the center of the homologies in $G(\Omega)$ is $(0,0,1)$. In this way, $C=\langle\alpha=\diag(\lambda,\lambda^{-1},1)\rangle$ with $o(\lambda)=2^f+1$, while $G(\Omega)=\langle\beta=\diag(\mu,\mu,1)\rangle$ with $o(\mu)=\omega$.
The element $\alpha^i\beta^j$ is either of type (A) or of type (B1); for given $\alpha^i$, $\alpha^i\beta^j$ is of type (A) if and only if $\mu^j=\lambda$ or $\mu^j=\lambda^{-1}$. Hence, there are exactly $\left(\gcd(2^f+1,\omega)-1\right)\cdot2$ elements of type (A) in $C\times G(\Omega)\setminus G(\Omega)$, and exactly $\frac{2^{2f}-2^f}{2}\cdot\left(\gcd(2^f+1,\omega)-1\right)\cdot2$ elements of type (A) in $G\setminus G(\Omega)$.
If $n/f$ is even, then $2^f+1$ divides $q-1$; hence, all the $\frac{2^f(2^{2f}-2^f)\omega}{2}$ elements in this class are of type (B2).
\end{itemize}
In the Riemann-Hurwitz formula $(q+1)(q-2)=|G|(2g(\cH_q/G)-2)+\Delta$, we have by Theorem \ref{caratteri} that
$$\Delta=  (\omega-1)(q+1)+(2^f-1)(2^f+1)(q+2)+(2^f-1)(2^f+1)(\omega-1)\cdot1$$
$$ +\frac{2^f(2^f+1)(2^f-2)\omega}{2}\cdot2+\delta,$$
where
$$\delta=(2^{2f}-2^f)\left(\gcd(2^f+1,\omega)-1\right)\cdot(q+1), $$
if $n/f$ is odd, while
$$\delta=\frac{2^f(2^{2f}-2^f)\omega}{2} \cdot 2,$$
otherwise.
The claim follows by direct computation.
\end{proof}

The case $G/G(\Omega)\cong \PGL(2,2^f)$ is given by Proposition \ref{conPSL}, since $\PGL(2,q)\cong\PSL(2,q)$ when $q$ is even.

\section{New genera for maximal curves}\label{sec:nuovigeneri}

The results of Sections \ref{sec:triangolo} and \ref{sec:polopolare} provide many genera for maximal curves over finite fields. It is possible to compare these values with the ones previously given in \cite{ABB,ATT,BMXY,CO,CO2,CKT1,CKT2,DO,DO2,FG,GGS,GSX}. We list here some new genera,  for some values of $q$.

\begin{center}
\begin{table}[H]
\begin{small}
\caption{New genera $g$ for $\mathbb{F}_{q^2}$-maximal curves} \label{table1}
\begin{center}
\begin{tabular}{|c|c|}
\hline $q$ & $g$ \\
\hline $13$  & $1$\\
\hline $2^5$ & $20$, $55$ \\
\hline $2^7$ & $22$, $133$, $287$, $420$, $903$, $904$ \\
\hline $3^5$ & $10$, $161$, $280$, $590$, $1180$, $2420$ \\
\hline $3^7$ & $91$, $1457$, $24661$, $49595$, $99190$, $198926$ \\
\hline $5^3$ & $17$, $39$, $46$, $63$, $91$, $134$, $210$, $211$, $273$, $274$, $369$, $630$, $631$, $861$ \\
\hline
\end{tabular}
\end{center}
\end{small}
\end{table}
\end{center}

\begin{remark}
Table {\rm \ref{table1}} gives a partial answer to {\rm \cite[Remark 4.4]{ATT}}; namely, Propositions {\rm \ref{indice3caso1}} and {\rm \ref{indice6}} yield examples of $\mathbb{F}_{13^2}$-maximal curves of genus $1$.
\end{remark}

\begin{flushleft}
\end{flushleft}


\begin{thebibliography}{99}

\bibitem{ABB} Anbar, N., Bassa, A., Beelen, P.: {\it A complete characterization of Galois subfields of the generalized Giulietti-Korchm\'aros function field}, Finite Fields Appl. {\bf 49}, 132--142 (2018).

\bibitem{AQ} Abd\'on, M., Quoos, L.: {\it On the genera of subfields of the Hermitian function field}, Finite Fields Appl. {\bf 10}, 271--284 (2004).

\bibitem{ATT} Arakelian, N., Tafazolian, S., Torres, F.: {\it On the spectrum for the genera of maximal curves over small fields}, Adv. Math. Commun. {\bf 12} (1), 143--149 (2018).

\bibitem{BMXY} Bassa, A., Ma, L., Xing, C., Yeo, S.L.: {\it Toward a characterization of subfields of the Deligne-Lusztig function fields}, J. Combin. Theory Series A {\bf 120}, 1351--1371 (2013).

\bibitem{BM} Beelen, P., Montanucci, M.: {\it A new family of maximal curves}, preprint, arXiv:1711.02894.

\bibitem{CO} \c{C}ak\c{c}ak, E., \"Ozbudak, F.: {\it Subfields of the function field of the Deligne-Lusztig curve of Ree type}, Acta Arith. {\bf 115} (2), 133--180 (2004).

\bibitem{CO2} \c{C}ak\c{c}ak, E., \"Ozbudak, F.: {\it Number of rational places of subfields of the function field of the Deligne-Lusztig curve of Ree type}, Acta Arith. {\bf 120} (1), 79--106 (2005).

\bibitem{CKT1} Cossidente, A., Korchm\'aros, G., Torres, F.: {\it On curves covered by the Hermitian curve}, J. Algebra {\bf 216} (1), 56--76 (1999).

\bibitem{CKT2} Cossidente, A., Korchm\'aros, G., Torres, F.: {\it Curves of large genus covered by the Hermitian curve}, Comm. Algebra {\bf 28}, 4707--4728 (2000).

\bibitem{DO} Danisman, Y., \"Ozdemir, M.: {\it On subfields of GK and generalized GK function fields}, J. Korean Math. Soc. {\bf 52} (2), 225--237 (2015).

\bibitem{DO2} Danisman, Y., \"Ozdemir, M.: {\it On the genus spectrum of maximal curves over finite fields}, J. Discr. Math. Sc. and Crypt. {\bf 18} (5), 513--529 (2015).

\bibitem{DL} Deligne, P., Lusztig, G.: {\it Representations of reductive groups over finite fields}, Ann. of Math. {\bf 103} (1), 103--161 (1976).

\bibitem{D} Dickson, L.E.: {\it Linear Groups with an Exposition of the Galois Field Theory}, Teubner, Leipzig (1902).

\bibitem{DM} Duursma, I., Mak, K.H.: {\it On maximal curves which are not Galois subcovers of the Hermitian curve}, Bull. Braz. Math. Soc. (N.S.) {\bf 43} (3), 453--465 (2012).

\bibitem{FG} Fanali, S., Giulietti, M.: {\it Quotient curves of the GK curve}, Adv. Geom. {\bf 12}, 239--268 (2012).

\bibitem{FT} Fuhrmann, R., Torres, F.: {\it On Weierstrass points and optimal curves}, Rend. Circ. Mat. Palermo Suppl. \textbf{51} (Recent Progress in Geometry, Ballico E, Korchm\'aros G, (Eds.)), 25--46 (1998).

\bibitem{G} Garcia, A.: {\it Curves over finite fields attaining the Hasse-Weil upper bound}, in: European Congress of Mathematics, vol. II (Barcellona 2000), Progr. Math. \textbf{202}, Birkh\"auser, Basel,  199--205 (2001).

\bibitem{G2} Garcia, A.: {\it On curves with many rational points over finite fields}, in: Finite Fields with Applications to Coding Theory, Cryptography and Related Areas, Springer, Berlin, 152--163 (2002).

\bibitem{GGS} Garcia, A., G\"uneri, C., Stichtenoth, H.: {\it A generalization of the Giulietti-Korchm\'aros maximal curve}, Adv. Geom. {\bf 10} (3), 427--434 (2010).

\bibitem{GS} Garcia, A., Stichtenoth, H.: {\it Algebraic function fields over finite fields with many rational places}, IEEE Trans. Inform. Theory {\bf 41}, 1548--1563 (1995).

\bibitem{GS3} Garcia, A., Stichtenoth, H.: {\it A maximal curve which is not a Galois subcover of the Hermitian curve}, {Bull. Braz. Math. Soc. (n.S.)} {\bf 37}, 139--152 (2006).

\bibitem{GSX} Garcia, A., Stichtenoth, H., Xing, C.P.: {\it On subfields of the Hermitian function field}, Compositio Math. {\bf 120}, 137--170 (2000).

\bibitem{GHKT2} Giulietti, M., Hirschfeld, J.W.P., Korchm\'aros, G., Torres, F.: {\it A family of curves covered by the Hermitian curve}, S\'emin. Congr. {\bf 21}, 63--78 (2010).

\bibitem{GK} Giulietti, M., Korchm\'aros, G.: {\it A new family of maximal curves over a finite field}, Math. Ann. {\bf 343}, 229--245 (2009).

\bibitem{GMZ}  Giulietti, M., Montanucci, M., Zini, G.: {\it On maximal curves that are not quotients of the Hermitian curve}, Finite Fields Appl. \textbf{41}, 72--88 (2016).

\bibitem{GQZ} Giulietti, M., Quoos, L., Zini, G.: {\it Maximal curves from subcovers of the GK-curve}, J. Pure Appl. Algebra {\bf 220}, 3372--3383 (2016).

\bibitem{GOS} G\"uneri, C., \"Ozdemir, M., Stichtenoth, H.: {\it The automorphism group of the generalized Giulietti-Korchm\'aros function field}, Adv. Geom. {\bf 13}, 369--380 (2013).

\bibitem{H} Hartley, R.W.: {\it Determination of the ternary collineation groups whose coefficients lie in the $GF(2^n)$}, Ann. of Math. Second Series {\bf 27} (2), 140--158 (1925).

\bibitem{HKT} Hirschfeld, J.W.P., Korchm\'aros, G., Torres, F.: {\it Algebraic Curves over a Finite Field}, Princeton Series in Applied Mathematics, Princeton (2008).

\bibitem{HO} Hoffer, A.R.: {\it On unitary collineation groups}, J. Algebra {\bf 22}, 211--218 (1972).

\bibitem{HP} Hughes, D.R., Piper, F.C.: {\it Projective Planes}. Graduate Text in Mathematics {\bf 6}, Springer, Berlin, xii+793 pp. (1973).

\bibitem{Hup} Huppert, B.: {\it Endliche Gruppen I}. Grundlehren der Mathematischen Wissenschaften {\bf 134}, Springer, Berlin (1967).

\bibitem{KL} Kleidman, P., Liebeck, M.: {\it The Subgroup Structure of the Finite Classical Groups}, London Mathematical Society, Lecture Note Series {\bf 129}, Cambridge University Press (1990).

\bibitem{KS} Kleiman, S.L. : \textit{Algebraic cycles and the Weil conjectures}, in: Dix expos\'es sur la cohomologie des sch\'emas, in: Adv. Stud. Pure Math. \textbf{3}, 359-386 (1968).

\bibitem{L} Lachaud, G.: {\it Sommes d'Eisenstein et nombre de points de certaines courbes alg\'ebriques sur les corps finis}, C.R. Acad. Sci. Paris \textbf{305}, S\'erie I, 729--732 (1987).

\bibitem{Mak} Mak, K.H.: {\it On Congruence Function Fields with many rational places}, PhD Thesis,
www.ideals.illinois.edu/bitstream/handle/2142/34193/Mak\textunderscore KitHo.pdf?sequence=1.

\bibitem{M} Mitchell, H.H.: {\it Determination of the ordinary and modular ternary linear groups}, Trans. Amer. Math. Soc. {\bf 12} (2), 207--242 (1911).

\bibitem{MZultimo} Montanucci, M., Zini, G.: {\it Quotients of the Hermitian curve from subgroups of ${\rm PGU}(3,q)$ without fixed points or triangles}, preprint, arXiv: 1804.03398.

\bibitem{MZ} Montanucci, M., Zini, G.: {\it On the spectrum of genera of quotients of the Hermitian curve}, Comm. Algebra (2018), DOI 10.1080/00927872.2018.1455100.

\bibitem{MZRS} Montanucci, M., Zini, G.: {\it Some Ree and Suzuki curves are not Galois covered by the Hermitian curve}, Finite Fields Appl. {\bf 48}, 175--195 (2017).

\bibitem{RobinHood} Re, R.: {\it Supersingular quotients of Fermat curves}, Finite Fields Appl. {\bf 15} (4), 450-467 (2009).

\bibitem{Sti} Stichtenoth, H.: {\it Algebraic function fields and codes}, 2nd edn. Graduate Texts in Mathematics {\bf 254}, Springer, Berlin (2009).

\bibitem{Suzuki} Suzuki, M.: {\it Group Theory I, II}, Springer-Verlag, Berlin (1986).

\bibitem{TTT} Tafazolian, S., Teher\'an-Herrera A., Torres, F.: {\it Further examples of maximal curves which cannot be covered by the Hermitian curve}, J. Pure Appl. Algebra {\bf 220}, 1122--1132 (2016).

\bibitem{vdG} van der Geer, G.: {\it Curves over finite fields and codes}, In: {\it European Congress of Mathematics}, vol. II (Barcellona 2000), Progr. Math. \textbf{202}, Birkh\"auser, Basel, 225--238 (2001).

\bibitem{vdG2} van der Geer, G.: {\it Coding theory and algebraic curves over finite fields: a survey and questions}, In: { Applications of Algebraic Geometry to Coding Theory, Physics and Computation, NATO Sci. Ser. II Math. Phys. Chem.} \textbf{36}, Kluwer, Dordrecht, 139--159 (2001).

\bibitem{WALL} Wall, C.T.C.: {\it On the structure of finite groups with periodic cohomology. (English summary) Lie groups: structure, actions, and representations}, Progr. Math. {\bf 306}, Birkh\"auser/Springer, New York, 381--413 (2013).

\end{thebibliography}
\end{document}